\newtheorem{thm}{Theorem}
\newtheorem{thm*}{Theorem}
\newtheorem{prop}{Proposition}
\newtheorem{lma}[prop]{Lemma}
\newtheorem{cor}[prop]{Corollary}
\theoremstyle{definition}
\newtheorem{df}[prop]{Definition} 
\theoremstyle{remark}
\newtheorem{rmk}[prop]{Remark} 
\newtheorem{qtn}[prop]{Question}
\newcommand{\R}{{\mathbb{R}}}
\newcommand{\Z}{{\mathbb{Z}}}
\newcommand{\C}{{\mathbb{C}}}
\newcommand{\Q}{{\mathbb{Q}}}
\newcommand{\D}{{\mathbb{D}}}
\newcommand{\bK}{{\mathbb{K}}}
\let\Im\relax
\DeclareMathOperator{\Im}{\mathrm{Im}}
\newcommand{\del}{\partial}
\newcommand{\sm}[1]{C^\infty(#1)}
\newcommand{\smc}[1]{C^\infty_c(#1)}
\renewcommand{\dh}[2]{d_{\mrm{Hofer}}(#1,#2)}
\newcommand{\vareps}[1]{\varepsilon_{#1}}
\newcommand\vol{\operatorname{vol}}
\newcommand{\cL}{\mathcal{L}}
\newcommand{\intoi}{\int_0^1}
\newcommand{\til}[1]{\widetilde{#1}}
\newcommand{\ol}[1]{\overline{#1}}
\newcommand{\arr}[1]{\overrightarrow{#1}}
\newcommand{\blue}[1]{{\color{black} #1}}
\newcommand{\zt}{{\Z/2\Z}}
\DeclareMathOperator{\ima}{\mathrm{im}}
\newcommand{\om}{\omega}
\newcommand{\eps}{\epsilon}
\newcommand{\cA}{\mathcal{A}}
\newcommand{\cC}{\mathcal{C}}
\newcommand{\cE}{\mathcal{E}}
\newcommand{\cF}{\mathcal{F}}
\newcommand{\cI}{\mathcal{I}}
\newcommand{\cR}{\mathcal{R}}
\newcommand{\LH}{{H,L}}
\def\mrm#1{{\mathrm{#1}}}
\def\bb#1{{\mathbb{#1}}}
\def\cl#1{{\mathcal{#1}}}
\def\bf#1{{\mathbf{#1}}}
\newcommand{\brat}[1]{{\left< #1 \right>}}
\newcommand{\bfb}{{\bf{b}, b}}
\DeclareMathOperator{\Sym}{\mathrm{Sym}}
\DeclareMathOperator{\Ham}{\mathrm{Ham}}
\DeclareMathOperator{\Symp}{\mathrm{Symp}}
\DeclareMathOperator{\Ker}{\mathrm{Ker}}
\DeclareMathOperator{\Cal}{\mathrm{Cal}}
\DeclareMathOperator{\id}{\mathrm{id}}
\DeclareMathOperator{\spec}{\mathrm{Spec}}
\def\H2{H^{(2)}}
\begin{document}

\title[Lagrangian configurations and Hamiltonian maps]{Lagrangian configurations \\ and Hamiltonian maps}

\author{Leonid Polterovich and Egor Shelukhin}
\date{}

\begin{abstract}
We study configurations of disjoint Lagrangian submanifolds in certain low-dimensional symplectic manifolds from the perspective of the geometry of Hamiltonian maps. We detect infinite-dimensional flats in the Hamiltonian group of the two-sphere equipped with Hofer's metric, prove constraints on Lagrangian packing, find instances of Lagrangian Poincar\'{e} recurrence, and present a new hierarchy of normal subgroups of area-preserving homeomorphisms of the two-sphere.   The technology involves Lagrangian spectral invariants with Hamiltonian term in symmetric product orbifolds.
\end{abstract}


\maketitle

\tableofcontents

\section{Introduction and main results}

\subsection{Overview}

A symplectic structure $\omega$ on an even-dimensional manifold $M^{2n}$ can be viewed as
a far reaching generalization of the two-dimensional area on surfaces: by definition,
$\omega$ is a closed differential $2$-form whose top wedge power $\omega^n$ is a volume form.
The group $\text{Symp}(M,\omega)$ of all symmetries of a symplectic manifold, i.e. of diffeomorphisms preserving the symplectic structure, contains a remarkable subgroup of {\it Hamiltonian diffeomorphisms} $\Ham(M,\omega)$. When $M$ is closed and its first cohomology vanishes, $\Ham$ coincides with the identity component of $\text{Symp}$. At the same time,
in classical mechanics, where $M$ models the phase space, $\Ham$ arises as the group of all
admissible motions. This group  became a central object
of interest in symplectic topology in the past three decades. In spite of that, some very basic
questions about the algebra, geometry and topology of $\Ham(M,\omega)$ are far from being understood
even when $M$ is a surface.

Let us briefly outline the contents of the paper.
First, we obtain new results on Hofer's bi-invariant geometry of $\Ham(M,\omega)$ in the case where $M=S^2$ is the two-dimensional sphere.
We establish, roughly speaking, that $\Ham(S^2)$ contains flats of arbitrary dimension, thus
solving a question open since 2006. A similar result was obtained simultaneously and independently, by using a different technique, in a recent paper by Cristofaro-Gardiner, Humili\`{e}re and Seyfaddini \cite{CGHS2}. Furthermore, our method yields an infinite hierarchy of normal subgroups of area-preserving homeomorphisms of the two-sphere, all of which contain the normal subgroup of homeomorphisms of finite energy discovered in \cite{CGHS2}. Additionally, we find a new constraint on rotationally symmetric homeomorphisms of finite energy. 

Second, we extend a number of elementary two-dimensional phenomena taking place on $S^2$ to
the {\it stabilized} space $S^2 \times S^2$, where the area of the second factor is ``much smaller" than that of the first one. These phenomena include constraints on packing by circles, which correspond to packings by two-dimensional tori after stabilization, and a  version of Poincar\'{e} recurrence theorem for sets of zero measure in the context of Hamiltonian diffeomorphisms. Furthermore, we prove a stabilized version of our results on Hofer's geometry presented above.

To illustrate the stabilization paradigm, an elementary area count shows that one cannot fit into the sphere of unit area  $k$ pair-wise disjoint Hamiltonian images of a circle $L$ bounding a disc of area $> 1/k$. We shall show that the same is true for $L \times \text{equator}$ in the stabilized space. Here the area/volume control miserably fails: a two-dimensional torus does not bound any
volume in a four-manifold!

The first instance of such a stabilization was discovered in a recent paper by Mak and Smith \cite{MakSmith-links}, who studied constraints on the displaceability of collections of curves on $S^2$.
Recall, that a set $X \subset M$ is called displaceable if there exists a Hamiltonian diffeomorphism $\phi$ with
$\phi(X) \cap X = \emptyset$. This notion, introduced by Hofer in 1990, gives rise to a natural ``small scale" on symplectic manifolds.
Mak and Smith noticed that certain ``small sets" on $M$ become more rigid when one looks at them in the symmetric product, a {\it symplectic orbifold}  $(M)^k/\Sym_k$ where $\Sym_k$ stands for the permutation group. On the technical side, our main observation is that a powerful Floer-theoretical tool, Lagrangian spectral invariants with Hamiltonian term, as developed by Leclercq-Zapolsky and Fukaya-Oh-Ohta-Ono, extends to Lagrangian tori in symmetric product orbifolds and can be applied to the study of the above-mentioned questions on  Hamiltonian maps. With this language, our paper provides a toolbox for {\em measurements of large energy symplectic effects on
small geometric scales} by using Floer theory in symmetric products.

Let us note that the idea of looking at configuration spaces of points on a surface, the two-sphere in particular, in order to construct invariants of Hamiltonian diffeomorphisms goes back to Gambaudo and Ghys \cite{gambaudo2004commutators}. Technically, it turns out that for this paper it is beneficial to work with symmetric products, which are certain compactifications of unordered configuration spaces, and instead of the sphere to look at a certain stabilization of it to a four-manifold. Thus our approach can be considered as a ``symplectization" of the one by Gambaudo and Ghys. Furthermore, the central objects of
interest to us are certain collections of pair-wise disjoint Lagrangian submanifolds
which in some sense govern Hamiltonian dynamics. These are
{\it the Lagrangian configurations} appearing in the title of the paper.

\subsection{Flats in Hofer's geometry}

In \cite{HoferMetric} Hofer has introduced a remarkable bi-invariant metric on the group $\Ham(M,\om)$ of Hamiltonian diffeomorphisms of a symplectic manifold $(M,\om).$ It can intuitively be thought of as the minimal $L^{\infty,1}$ energy required to generate a given Hamiltonian diffeomorphism. For a time-dependent Hamiltonian $H$ in $\sm{[0,1]\times M, \R}$ we denote by $\{ \phi^t_H \}_{t \in [0,1]}$ the Hamiltonian isotopy generated by the vector field $X^t_H$ given by \[\om(X^t_H, \cdot) = - dH_t(\cdot),\] where $H_t(\cdot) = H(t,\cdot)$ for all $t \in [0,1].$ We say that $H$ has zero mean if  $\int H_t\, \om^n = 0$ for all $t \in [0,1].$ When the symplectic manifold is closed, the Hofer distance of $\phi \in \Ham(M,\om)$ to the identity is defined as
\[d_{\rm{Hofer}}(\phi, \id) = \inf_{\phi_H^1 = \phi} \intoi \max_M |H(t,-)|\; dt,\]
where the infimum is taken over all zero mean Hamiltonians generating $\phi$.
It is extended to arbitary pairs of diffeomorphisms by the bi-invariance, \[d_{\rm{Hofer}}(\psi \phi, \psi \phi') = d_{\rm{Hofer}}( \phi \psi, \phi' \psi) = d_{\rm{Hofer}}(\phi, \phi'),\] for all $\phi,\phi',\psi \in \Ham(M,\om).$

The non-degeneracy of $d_{\mrm{Hofer}}$ was studied in many further publications such as \cite{Viterbo-specGF,Polterovich-displ} and was proven to hold for arbitrary symplectic manifolds in \cite{Lalonde-McDuff-Energy}. Let us mention also that Hofer's metric naturally lifts to a (pseudo)-metric on the universal cover $\til{\Ham}(M,\om)$, where the question about its non-degeneracy is still open in full generality. We refer to \cite{P-book} for a detailed introduction to the Hofer metric and many of its aspects and properties.

The main question related to the Hofer metric, Problem 20 in \cite{McDuffSalamonIntro3}, is whether its diameter is infinite for all symplectic manifolds, and when it is, which unbounded groups can be quasi-isometrically embedded into $(\Ham(M,\om),d_{\mrm{Hofer}}).$

\begin{thm}\label{thm: main-L} The additive group $\cl{G} = C^{\infty}_c(I)$ of compactly supported smooth functions on an open interval $I$ with the $C^0$ distance embeds isometrically into $\Ham(S^2)$ endowed with $d_{\mrm{Hofer}}$.
\end{thm}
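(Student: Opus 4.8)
The plan is to write down the embedding by hand, deduce the ``easy'' half of the isometry from an elementary estimate, and derive the ``hard'' half — which is the real content — from Lagrangian spectral invariants in symmetric products.

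\textbf{The embedding and the upper bound.} Present $S^2$ with an area form of total area one carrying the standard Hamiltonian circle action; let $\mu\colon S^2\to[-1,1]$ be the moment map (the height function) and fix an area-preserving involution $\iota$ of $S^2$ with $\mu\circ\iota=-\mu$. Identify $I$ with an open subinterval of $(0,1)$. To $v\in\cl G$ assign the autonomous, $\iota$-anti-invariant Hamiltonian $H_v:=F_v\circ\mu$, where $F_v(t):=v(t)-v(-t)$, and set $\Phi(v):=\phi^1_{H_v}\in\Ham(S^2)$. All the $H_v$ are functions of $\mu$, hence Poisson-commute, so their flows commute and $\Phi(v)\Phi(w)=\phi^1_{H_v+H_w}=\Phi(v+w)$: thus $\Phi\colon(\cl G,+)\to\Ham(S^2)$ is a group homomorphism. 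Since $H_v$ is $\iota$-anti-invariant it has zero mean, and since $v$ is supported in $(0,1)$ the range of $H_v$ on $S^2$ is exactly $[-\|v\|_{C^0},\|v\|_{C^0}]$ ($H_v=v\circ\mu$ on $\mu^{-1}(I)$, $H_v=-v\circ(-\mu)$ on $\mu^{-1}(-I)$, and $H_v\equiv0$ elsewhere). Hence $d_{\mrm{Hofer}}(\Phi(v),\id)\le\intoi\max_{S^2}|H_v|\,dt=\|v\|_{C^0}$. By bi-invariance it now suffices to prove the matching lower bound $d_{\mrm{Hofer}}(\Phi(v),\id)\ge\|v\|_{C^0}$ for every $v$; injectivity of $\Phi$ then follows.

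\textbf{The lower bound.} Fix $v\ne0$ and $\eps>0$; replacing $v$ by $-v$ if necessary (which by bi-invariance does not change $d_{\mrm{Hofer}}(\Phi(v),\id)$ and only negates $F_v$), pick a level $c$ and a tiny interval $J\ni c$ on which $F_v\ge\|v\|_{C^0}-\eps$. For each large $N$ choose pairwise disjoint latitude circles $C_{s_1},\dots,C_{s_N}$ with $s_1<\dots<s_N$, all but $o(N)$ of them inside $J$ and the rest placed so that the image torus $\bb{T}_N:=(C_{s_1}\times\cdots\times C_{s_N})/{\sim}$ in the symmetric-product orbifold $\Sym^N(S^2)=(S^2)^N/\Sym_N$ is \emph{admissible}: weakly unobstructed with non-vanishing Floer homology over the Novikov field. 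Such configurations exist in profusion, since the disk potential of a product-of-circles torus in the symmetric product has critical points over an open set of area parameters — and it is precisely this freedom to add a few ``balancing'' circles that the combinatorics of the proof must exploit. For such tori one has Lagrangian spectral invariants with Hamiltonian term $\ell(\bb{T}_N;-)$ (the Leclercq--Zapolsky/Fukaya--Oh--Ohta--Ono invariants, set up here in the orbifold), which are honestly $\R$-valued, $1$-Lipschitz with respect to $d_{\mrm{Hofer}}$ on $\Ham(\Sym^N(S^2))$, and satisfy Lagrangian control. Now $\Phi(v)$ induces $\Sym^N(\Phi(v))\in\Ham(\Sym^N(S^2))$, generated by $\Sym^N(H_v)(\{x_1,\dots,x_N\})=\sum_i H_v(x_i)$, which restricted to $\bb{T}_N$ is the \emph{constant} $\sum_i F_v(s_i)$; Lagrangian control therefore pins down $\ell(\bb{T}_N;\Sym^N(\Phi(v)))=\sum_i F_v(s_i)$. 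On the other hand any Hamiltonian generating $\Phi(v)$ induces one generating $\Sym^N(\Phi(v))$ with at most $N$ times the oscillation, so $d_{\mrm{Hofer}}(\Sym^N(\Phi(v)),\id)\le N\,d_{\mrm{Hofer}}(\Phi(v),\id)$; combined with the $1$-Lipschitz property this gives $\sum_i F_v(s_i)\le N\,d_{\mrm{Hofer}}(\Phi(v),\id)$. Dividing by $N$, the left side tends to $\|v\|_{C^0}-\eps$ as $N\to\infty$ (the $o(N)$ balancing circles contribute negligibly), whence $d_{\mrm{Hofer}}(\Phi(v),\id)\ge\|v\|_{C^0}-\eps$, and $\eps\to0$ finishes the proof.

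\textbf{Where the difficulty lies.} Everything in the first paragraph is elementary; the weight of the argument is the machinery invoked in the second. One must build Lagrangian Floer theory with Hamiltonian term for the tori $\bb{T}_N$ \emph{inside the orbifold} $\Sym^N(S^2)$, i.e.\ control the Floer moduli problems in the presence of the diagonal singularities; identify exactly which circle configurations yield weakly unobstructed tori with non-vanishing Floer homology and spectral invariants that are genuine $\R$-valued functions on $\Ham(\Sym^N(S^2))$ (a disk-potential computation in the symmetric product, which is what dictates the placement of circles); and verify naturality of these invariants under $\Ham(S^2)\to\Ham(\Sym^N(S^2))$ along with the Lagrangian-control and Hofer-Lipschitz axioms. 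The detour through $\Sym^N$ is unavoidable: the ordinary Hamiltonian and single-Lagrangian spectral invariants of $S^2$ are bounded, hence blind to an unbounded flat, whereas the symmetric-product tori carry unbounded $\R$-valued spectral invariants, and that is what breaks the boundedness.
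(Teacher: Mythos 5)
Your construction has a fatal parity error, and your lower-bound argument has a gap that is precisely what that error exploits.

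\textbf{The odd extension is disqualifying.} You send $v$ to $H_v=F_v\circ\mu$ with $F_v(t)=v(t)-v(-t)$, which is an odd function of the moment map; you even advertise $H_v$ as ``$\iota$-anti-invariant.'' But this kills the theorem. If $H\circ\iota=-H$ for an area-preserving involution $\iota$ reversing the height, then $\Phi(v)^{-1}=\phi^1_{-H_v}=\phi^1_{H_v\circ\iota}=\iota\,\Phi(v)\,\iota^{-1}$, so $\Phi(v)$ is conjugate to its inverse. Hence $\Phi(2v)=\Phi(v)^2=\Phi(v)\cdot\iota\,\Phi(v)^{-1}\iota^{-1}$ and, by bi-invariance and conjugation-invariance of Hofer's metric, $d_{\mrm{Hofer}}(\Phi(2v),\id)\le 2\,d_{\mrm{Hofer}}(\iota,\id)$. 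Since $2v$ ranges over all of $\cl G$, the image of $\Phi$ lies in a fixed Hofer ball of radius $2\,d_{\mrm{Hofer}}(\iota,\id)$, whereas $\|v\|_{C^0}$ is unbounded. The map $\Phi$ you wrote down is therefore \emph{not} isometric (it is not even unbounded), and no amount of Floer theory will fix this. This obstruction is spelled out in Remark~\ref{rmk: ball} of the paper, which is exactly why the paper works with \emph{even} (not odd) extensions $h^\#$ of $h$.

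\textbf{Where the lower bound actually fails.} Your spectral argument, as written, bounds the Hofer pseudo-distance on the \emph{universal cover} $\til{\Ham}(S^2)$, not on $\Ham(S^2)$. The invariants $\ell(\bb T_N;-)$ are a priori defined on $\til{\Ham}(\Sym^N(S^2))$ through homotopy invariance; Lagrangian control applied to $\Sym^N(H_v)$ computes $\ell$ at the \emph{lift} $\Sym^N(\til\phi_{H_v})\in\til{\Ham}(\Sym^N(S^2))$, and your Lipschitz chain then gives $\sum_iF_v(s_i)\le N\,d^{\til{\Ham}(S^2)}_{\mrm{Hofer}}(\til\phi_{H_v},\id)$. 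But $d^{\Ham(S^2)}_{\mrm{Hofer}}(\Phi(v),\id)=\min_{g\in\pi_1\Ham(S^2)}d^{\til{\Ham}}_{\mrm{Hofer}}(\til\phi_{H_v}\,g,\id)$, and $\pi_1\Ham(S^2)=\Z/2$; your argument controls only one of the two terms in this minimum. The paper handles this point by noting that the $\pi_1$-generator $\psi$ is generated by the moment map $z$ itself, and then invokes Lagrangian control to get $c^0_{k,B}(\psi)=\tfrac1k\sum_j z_j=0$, which is where \emph{balancedness} of the circle configuration (the heights $z_j$ sum to zero) is used crucially. Your tori $\bb T_N$ are deliberately \emph{unbalanced} — nearly all circles are crowded in a tiny interval near $c\neq 0$ — so $\ell(\bb T_N;\,\cdot\,)$ does not vanish on the $\pi_1$-generator, the invariant does not descend to $\Ham(S^2)$, and the argument only gives the $\til{\Ham}$-level bound, which is consistent with the boundedness of $\Phi$ in $\Ham(S^2)$ noted above.

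\textbf{Additional unproven inputs.} You assert that ``such configurations exist in profusion,'' but the paper's Floer-homological input (Theorem~\ref{thm: orbifold homology} and the potential computation culminating in Lemma~\ref{lma: crit pts}) is proved only for the specific configurations $\mathbb{L}^0_{k,B}$ with $B>C$ and equal spacing $C$, and furthermore is carried out in $\Sym^k(S^2\times S^2(2a))$, not in $\Sym^N(S^2)$: the stabilization is what makes the Maslov-index dimension count clean (forcing $\mu=2$), whereas in $\Sym^N(S^2)$ Maslov-$0$ annuli contribute and the paper only claims this direct approach works in the boundary case $B=C=1/(k+1)$ of equally spaced circles (Section~\ref{sec-discussion}, last subsection). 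Your clustered tori with $N-o(N)$ circles concentrated in a short interval plus $o(N)$ ``balancing'' circles are not covered by any of these computations, and it is not at all clear that the required non-resonance and unobstructedness hold for them.

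\textbf{What the paper does instead.} It uses \emph{even} $h$ (extended to a mean-zero $h^\#$), fixes a $k=2$ balanced link placed at heights $\pm(1/2-B)$ near the level where $|h|$ is achieved, works in $\Sym^2(S^2\times S^2(2a))$ (so the potential computation and Maslov count are under control), and verifies the $\pi_1$-descent explicitly. Letting the balanced configuration track the level where $|h|$ is attained replaces your $N\to\infty$ averaging; the evenness of $h$ ensures the two circles see the same value, which is what feeds the full $|h|_{C^0}$ into the estimator. Your high-level intuition — that spectral invariants in symmetric products detect the flat — matches the paper, but the implementation misses both the parity issue and the $\pi_1$ issue, each of which is fatal on its own.
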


This settles a question of the first author and Kapovich from $2006$, cf. Problem 21  \cite{McDuffSalamonIntro3}. A similar result was obtained simultaneously and independently in \cite{CGHS2} by completely different methods based on periodic Floer homology.
As, by a classical theorem \cite[Th\'eor\`eme 10, p. 187]{Banach} from a book of Banach, every separable metric space admits an isometric embedding into $C^0_c(I),$ and by smooth approximation, the latter is quasi-isometric to $C^{\infty}_c(I),$ Theorem \ref{thm: main-L} implies the following.

\begin{cor}\label{cor: Banach}
Every separable metric space admits a quasi-isometric embedding into $\Ham(S^2)$ endowed with $d_{\mrm{Hofer}}$.
\end{cor}

For closed surfaces of higher genus, flats of arbitrary dimension were found in \cite{Py-plats}. The proof is based on the fact that such a surface admits an
incompressible annulus foliated by non-displaceable closed curves. Symplectic rigidity
of these curves yields the result. The lack of such an annulus in the case of $S^2$ requires a new tool, {\it Lagrangian estimators}, which we develop by using Lagrangian Floer theory in orbifolds, see Sections \ref{sec-estimators} and \ref{sec: prelim}.

In fact,  our method of proof yields the following statement about Hofer's geometry in dimension four. Throughout the paper $S^2(b)$ stands for the sphere equipped with the standard area form normalized in such a way that the total area equals $b$. We abbreviate $S^2=S^2(1)$.

For $a > 0$, consider a symplectic manifold $M_a = S^2 \times S^2(2a)$.
The natural monomorphism \[\iota: {\Ham}(S^2) \to {\Ham}(M_a),\;\; \phi \mapsto \phi \times \id,\] satisfies $d_{\mrm{Hofer}}(\iota(\phi), \iota(\psi)) \leq d_{\mrm{Hofer}}(\phi, \psi)$.

\begin{thm}\label{thm: main2-L} Assume that $a>0$ is small enough. The isometric
monomorphism $\Phi: \cl{G}\to \Ham(S^2)$ from Theorem \ref{thm: main-L}
can be chosen in such a way that
\[\Psi = \iota \circ \Phi: (\cl{G}, d_{C^0}) \hookrightarrow (\Ham(M_a),d_{\rm{Hofer}})\] is a bi-Lipschitz embedding. Furthermore,  the monomorphism of the universal covers $\til{\Psi}:\cl{G} \to \til{\Ham}(M_a)$ covering $\Psi$ is an isometric embedding.
\end{thm}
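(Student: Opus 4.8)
The plan is to reduce the statement, via bi-invariance of $d_{\mrm{Hofer}}$ and the fact that $\iota$, $\Phi$, and hence $\Psi$ and the induced lift $\til\Psi$, are group homomorphisms, to proving for $h\in\cl G$ the bounds $c_0\,\|h\|_{C^0}\le d_{\mrm{Hofer}}(\Psi(h),\id)\le\|h\|_{C^0}$ (for a suitable $c_0>0$) and $d_{\mrm{Hofer}}(\til\Psi(h),\id)=\|h\|_{C^0}$, where $\|h\|_{C^0}=\max_I|h|$; applying these to $h=f-g$ then gives the theorem. The upper bounds are immediate: the canonical lift $\til\Psi(h)$ is generated on $M_a$ by $H_h\oplus 0$, where $H_h$ is the Hamiltonian on $S^2$ producing $\Phi(h)$ in the proof of Theorem~\ref{thm: main-L}, so it has the same Hofer energy as $H_h$; combined with the isometry of Theorem~\ref{thm: main-L} and the contraction property $d_{\mrm{Hofer}}(\iota(\phi),\iota(\psi))\le d_{\mrm{Hofer}}(\phi,\psi)$ recorded above, this gives $d_{\mrm{Hofer}}(\til\Psi(h),\id)\le\|h\|_{C^0}$ on $\til\Ham(M_a)$ and a fortiori $d_{\mrm{Hofer}}(\Psi(h),\id)\le\|h\|_{C^0}$. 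Everything therefore reduces to matching lower bounds.

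For the lower bounds I would use the Lagrangian estimators of Section~\ref{sec-estimators}. The construction of $\Phi$ arranges that $H_h$ depends only on the area coordinate on an annulus $A_I\subset S^2$ matched with $I$, so $\Psi(h)=\Phi(h)\times\id$ preserves each torus $L_c\times E$, where $L_c\subset A_I$ is the circle at area level $c\in I$ and $E\subset S^2(2a)$ is the equator, and acts on it through the element of its symmetry group recorded by $h(c)$. For $a$ below a uniform threshold, passing to the symmetric product orbifold $(M_a)^k/\Sym_k$ produces, from finite tuples $\underline{c}=(c_1,\dots,c_k)$ of such levels, monotone Lagrangian tori $\mathbb L(\underline{c})$ with non-vanishing Floer homology with Hamiltonian term — here one invokes the orbifold Lagrangian Floer package of Section~\ref{sec: prelim} together with the stabilization mechanism of Mak--Smith \cite{MakSmith-links}, which is where smallness of $a$ enters. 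The associated estimators $\mu_{\mathbb L}\colon\til\Ham(M_a)\to\R$ are $1$-Lipschitz for $d_{\mrm{Hofer}}$, vanish at $\id$, and satisfy $|\mu_{\mathbb L}(\tilde\phi)|\le d_{\mrm{Hofer}}(\tilde\phi,\id)$; and since $\Psi(h)$ preserves $\mathbb L(\underline{c})$ and acts on it through the element recorded by $h$, a Künneth computation — identical in spirit to the one underlying Theorem~\ref{thm: main-L} — evaluates $\mu_{\mathbb L(\underline{c})}(\til\Psi(h))$ as the normalized average $\tfrac1k\sum_{i=1}^k h(c_i)$. Letting $k\to\infty$ and concentrating the levels near a point where $|h|$ is maximal, the supremum of these averages over admissible tuples equals $\|h\|_{C^0}$; hence $d_{\mrm{Hofer}}(\til\Psi(h),\id)\ge\|h\|_{C^0}$, and with the upper bound $\til\Psi$ is an isometric embedding.

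It remains to descend to $\Ham(M_a)$. Here each $\mu_{\mathbb L}$ is well defined only modulo the image of the shift homomorphism $\pi_1(\Ham(M_a))\to\R$ attached to $\mathbb L$. For $S^2$ this causes no trouble, since $\pi_1(\Ham(S^2))=\Z/2$ is finite and the homomorphism therefore vanishes — which is precisely why Theorem~\ref{thm: main-L} already gives an isometric embedding into $\Ham(S^2)$ itself. For $M_a=S^2\times S^2(2a)$, however, $\pi_1(\Ham(M_a))$ is infinite and the indeterminacy need not vanish, nor even be discrete. I would handle this by showing that the shift homomorphisms attached to the various $\mathbb L(\underline{c})$ coincide — which is plausible because these tori are mutually Lagrangian isotopic and the shift is a homotopy-theoretic quantity — so that appropriate combinations of the $\mu_{\mathbb L(\underline{c})}$ descend to genuine, uniformly Lipschitz functions on $\Ham(M_a)$ that still detect $\|h\|_{C^0}$ up to a dimension-independent constant $c_0>0$; together with the upper bound this makes $\Psi$ bi-Lipschitz. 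The hard part is exactly this last step: pinning down the $\pi_1(\Ham(M_a))$--indeterminacy of the Lagrangian estimators and checking it is compatible across the family, so that a nontrivial functional survives on $\Ham(M_a)$ — an issue with no counterpart in the proof of Theorem~\ref{thm: main-L}. (A secondary, more routine point is to verify that a single threshold for $a$ can be taken to work simultaneously for the cofinal family of tori needed to read off $\|h\|_{C^0}$.)
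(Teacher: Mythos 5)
Your reduction to lower bounds, your treatment of the universal cover via Lagrangian estimators and Lagrangian control, and your identification of the $\pi_1(\Ham(M_a))$-indeterminacy as the crux of the descent from $\til{\Ham}(M_a)$ to $\Ham(M_a)$ all match the paper. However, the resolution you propose for that last step is both different from the paper's and, as stated, has a genuine gap.

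The paper does not attempt to compute or match the ``shift homomorphisms'' of the estimators across a family of Lagrangian configurations. Instead, it imports an entirely different invariant: Ostrover's homogeneous Calabi quasi-morphism $\rho:\til{\Ham}(M_a)\to\R$, which is $1$-Lipschitz in Hofer's metric, restricts to a \emph{nontrivial homomorphism} on the infinite-cyclic free part $\cA\cong\Z$ of $\pi_1(\Ham(M_a))\cong\Z\oplus\Z/2\oplus\Z/2$ (Abreu--McDuff), and --- because the Clifford torus $S\times S\subset M_a$ is a stem --- satisfies $\rho(\Psi(h))=h(0)=0$ whenever $h\in\cF_I^0$. The vector-valued map $\nu_r=(\mu_{2,B},\rho):\til{\Ham}(M_a)\to\R^2$ is then $1$-Lipschitz, and an elementary optimization over the Gromov-loop power $k$ in a lift $\til\Psi(h)g^k f$ (with $f$ torsion) gives $|\nu_r|\ge C_2\,\|h\|_{C^0}$ with $C_2=\rho(g)/(\rho(g)+d_{\mrm{Hofer}}(g,\id))$, yielding the bi-Lipschitz bound on $\Ham(M_a)$ itself. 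This is how the paper bypasses the need to understand the restriction of $\mu_{2,B}$ to the free part of $\pi_1$, which it explicitly flags as unclear.

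Your alternative --- proving that the shift homomorphisms of the various tori $\mathbb{L}(\underline c)$ coincide and taking differences of estimators to cancel the $\pi_1$-ambiguity --- is not something the paper does, and it faces a concrete obstacle you do not address: even granting that differences $\mu_{\mathbb{L}(\underline c)}-\mu_{\mathbb{L}(\underline c')}$ descend, these differences evaluate on $\Psi(h)$ as differences of averages $\tfrac1k\sum h(c_i)-\tfrac1{k'}\sum h(c_i')$, which bound the \emph{oscillation} of $h$ rather than $\|h\|_{C^0}$ directly, and the claimed ``dimension-independent $c_0>0$'' is not justified. You also omit the one piece of additional structure that makes the paper's argument go through: the role of $h(0)=0$ (equivalently, restriction to $\cF_I^0$, which $\cl G$ lands in automatically), which is precisely what makes $\rho(\Psi(h))$ vanish and hence pins down the $\Z$-ambiguity. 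So while you correctly locate the difficulty, the proposed fix is speculative and incomplete, and the key idea of the paper's actual proof --- the use of $\rho$ together with the stem property of $S\times S$ --- is missing.
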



\subsection{Lagrangian packing}\label{subsec-lp}
Let $K_r$ be a simple closed curve on the sphere $S^2=S^2(1)$
bounding a disc of area $1/{k} > r > 1/(k+1)$, $k \in \mathbb{N},$ $k\geq 2$. Let
$S \subset S^2(2a)$ be the equator. Note that the Lagrangian torus
$\Lambda_r = K_r \times S$ can be Hamiltonianly $k$-packed into $M_a$, that is, there are $k$ Hamiltonian diffeomorphisms $\phi_1 = \id, \phi_2,\ldots,\phi_k$ of $M_a$ such that $\{\phi_j (\Lambda_{r}) \}_{1 \leq j \leq k}$ are pairwise disjoint. Indeed, such a packing exists for
$K_r \subset S^2$.

\begin{thm}[Lagrangian packing]\label{cor-lp} One cannot pack $M_a = S^2 \times S^2(2a)$ by $k+1$ Hamiltonian images of $\Lambda_r$ if $a$ is sufficiently small.
\end{thm}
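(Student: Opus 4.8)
The plan is to argue by contradiction and reduce Theorem~\ref{cor-lp} to the numerical inequality $(k+1)r \le 1$, which fails since $r > 1/(k+1)$. So suppose $\Lambda_r$ admits a $(k+1)$-packing of $M_a$: there are Hamiltonian diffeomorphisms $\phi_1 = \id, \phi_2,\dots,\phi_{k+1}$ of $M_a$ with the tori $L_j := \phi_j(\Lambda_r)$ pairwise disjoint. On $S^2$ alone the analogous impossibility is an elementary area count: $k+1$ disjoint circles each bounding a disc of area $>1/(k+1)$ would have disjoint ``small caps'' of total area $>1$. In $M_a$ this argument collapses, since each $L_j$ is merely a Lagrangian two-torus in a four-manifold — not of the form curve~$\times$~equator, as $\phi_j$ need not be a product — and bounds no four-dimensional volume. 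The area count must therefore be replaced by a Floer-theoretic one, and the device for this is passage to the symmetric product.

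Because the $L_j$ are pairwise disjoint, the product $L_1\times\cdots\times L_{k+1}\subset (M_a)^{k+1}$ avoids the big diagonal, and its image in the symmetric product orbifold $X := \Sym^{k+1}(M_a) = (M_a)^{k+1}/\Sym_{k+1}$ is an embedded Lagrangian torus $\mathbf{L}$, lying in the smooth locus away from the orbifold points. The first task is to verify that $\mathbf{L}$ is admissible for the Lagrangian Floer theory and the Lagrangian spectral invariants with Hamiltonian term developed in Sections~\ref{sec-estimators} and \ref{sec: prelim}: one controls the holomorphic discs in $X$ bounded by $\mathbf{L}$ — those from the individual $S^2$-factors, of areas $r$ and $1-r$; those from the $S^2(2a)$-factors, of area $a$; and the ``cross-factor'' classes that appear only after symmetrization — and exhibits a bounding cochain (local system) rendering $HF(\mathbf{L},\mathbf{L})$ nonzero, so that the invariants are defined. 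The smallness of $a$ is used here to keep the $S^2(2a)$-contributions negligible and ensure that the relevant low-area disc classes are the ones coming from the $S^2$-direction.

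Next I would evaluate the $\mathbf{L}$-estimator on a test Hamiltonian $G$ on $X$, taken to be the $\Sym_{k+1}$-invariant descent of $\sum_{i=1}^{k+1}\mathrm{pr}_i^*\,g$, where $g$ is pulled back from $S^2$ and generates a rotation of the sphere; this sum descends to $X$ since it is symmetric in the factors. The contradiction comes from two estimates on the variation of the estimator along the isotopy of $G$. For the lower bound, the key point is that the $L_j$ are pairwise disjoint, so the rotation meets them \emph{one at a time}: the $k+1$ local jumps of the estimator occur at distinct moments and hence add, and each such jump is at least $r$, being governed by the area $r$ of the disc bounded by $K_r$ — this is the substance of the computations in the preliminary sections — so the total variation is at least $(k+1)r$. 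For the upper bound, a Floer-theoretic packing inequality for the estimator — not merely naive Lagrangian control, which is too weak — bounds the same variation by the ``size'' of $\Sym^{k+1}(S^2)\cong\mathbb{CP}^{k+1}$, i.e. by the total area $1$ of $S^2 = S^2(1)$; here is where the symmetric product is essential, since it is precisely the cross-factor discs in $\Sym^{k+1}(M_a)$ — absent in the honest product $(M_a)^{k+1}$, where $\mathbf{L}$ is displaceable and Floer-acyclic — that force the $k+1$ contributions to compete for the single area budget of one sphere, exactly as the $k+1$ caps compete on $S^2$ in the two-dimensional picture. Combining, $(k+1)r\le 1$, contradicting $r>1/(k+1)$; thus no $(k+1)$-packing exists, and the $k$-packing described before the statement shows this bound is sharp.

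The main obstacle is the Floer-theoretic bookkeeping in the orbifold $X$: establishing that the Lagrangian spectral invariants with Hamiltonian term are well defined for $\mathbf{L}$ and satisfy all the axioms invoked above (monotonicity, the triangle inequality, Lagrangian control, and the additivity of the one-at-a-time jumps) in the presence of the quotient singularities of $\Sym^{k+1}(M_a)$, and — most delicately — obtaining the \emph{sharp} constant $1$ in the upper bound. The latter requires identifying precisely which disc classes on $\mathbf{L}$ enter the relevant part of the $A_\infty$-structure and its potential function, the essential phenomenon being that the cross-factor discs in the symmetric product are exactly what upgrades the vacuous four-dimensional volume bound to the genuine two-dimensional area bound $(k+1)r\le 1$. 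This is presumably the content of the earlier technical sections, so in the proof proper one would only have to assemble their outputs into the two competing inequalities above.
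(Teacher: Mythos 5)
Your approach is genuinely different from the paper's, and it has two serious gaps that the paper's argument is specifically designed to avoid.

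The paper's proof is \emph{indirect} and uses Hofer geometry as the bridge: the Floer theory is done once, on the fixed configuration $\mathcal{L}_{k,B}$ in $\Sym^{k}(M_a)$ (with $k$, not $k+1$, factors), where the areas $B>C$ are chosen precisely so that the matching property holds and the superpotential has a critical point. This yields the lower bound $\overline{d}_{\mathrm{Hofer}}(\mathrm{id},\Phi_{r,\delta}) \geq 1/k$ for the stabilized bump Hamiltonian concentrated near $\Lambda_r$. Sikorav's trick — a soft argument that only uses disjointness of supports and conjugation invariance of Hofer's pseudo-metric, with no Floer theory — gives the matching upper bound $\leq 1/k$ from the genuine $k$-packing, so $u_K(r)=1/k$ exactly. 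If a $(k+1)$-packing existed, Sikorav's trick applied to it would yield $u_K(r)\leq 1/(k+1) < 1/k$, a contradiction. At no point is a Lagrangian in $\Sym^{k+1}(M_a)$ constructed from the hypothetical packing.

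Your proposal, by contrast, is \emph{direct}: form $\mathbf{L}\subset\Sym^{k+1}(M_a)$ out of the hypothetical $(k+1)$-packing and derive a contradiction from Floer theory on $\mathbf{L}$. This faces two genuine obstacles. First, you assert that one ``exhibits a bounding cochain rendering $HF(\mathbf{L},\mathbf{L})$ nonzero,'' but this is exactly where the argument would collapse. The paper's non-displaceability machinery relies on the specific asymmetric area profile $(B,C,\dots,C,B)$ with $B>C$, which guarantees the matching property and gives a superpotential with critical points. The tori in your packing are all Hamiltonian images of the same $\Lambda_r$ — the area profile is uniform, the matching property is precisely the one that fails, and the $\phi_j$ are arbitrary maps of $M_a$ (not products), so $\mathbf{L}$ need not lie in any of the families the paper analyzes. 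Showing $\mathbf{L}$ is Floer-nontrivial would essentially presuppose the theorem, not prove it. Second, the ``Floer-theoretic packing inequality'' you invoke to bound the variation by the total area $1$ of $S^2$ is not a result in the paper, nor is it a consequence of the axioms in Theorems~\ref{thm: main spec k} or \ref{thm: main k}, and it is not clear how one would prove such a sharp upper bound without already knowing the packing number. In short, the paper proves $u_K(r)=1/k$ using the $k$-fold symmetric product and then leverages the \emph{elementary} Sikorav's trick to rule out the $(k+1)$-packing, whereas your route demands Floer-theoretic control over an uncontrolled Lagrangian in the $(k+1)$-fold symmetric product. You would be better off reading the proof in Section~\ref{sec-lp} together with the identity~\eqref{eq-K} in Section~\ref{subsec-asymptotic} to see how the indirection dissolves both of these difficulties.
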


We present an argument, based on asymptotic Hofer's geometry, in Section \ref{sec-lp}.

\subsection{Lagrangian Poincar\'{e} recurrence} Using the Lagrangian packing obstructions, we are able to make the following progress on the well-known Lagrangian Poincar\'{e} recurrence conjecture in Hamiltonian dynamics \cite{GG-Arnold}.
It is a version of the classical Poincar\'{e} recurrence theorem, but in the setting of Lagrangian submanifolds instead of sets of positive measure. Note that except for the case of surfaces, this is a purely symplectic question, since Lagrangian submanifolds do not bound and they have zero measure.

Let $\Lambda_r \subset M_a$ be a Lagrangian torus as in Section \ref{subsec-lp}.
For an arbitrary Hamiltonian diffeomorphism $\phi \in \Ham(M_a)$, consider the {\it recurrence set}
$$\mathcal{R}_\phi := \{n \in \mathbb{N} \;:\; \phi^n \Lambda_r \cap \Lambda_r \neq \emptyset\}\;.$$

\begin{thm}[Lagrangian recurrence]\label{cor-lr} The lower density of $\cR_\phi$ is at least
	$1/k$.
\end{thm}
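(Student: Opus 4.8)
The plan is to deduce Lagrangian recurrence from the Lagrangian packing obstruction (Theorem~\ref{cor-lp}) by a pigeonhole argument on the orbit of $\Lambda_r$ under iterates of $\phi$. First I would fix $\phi \in \Ham(M_a)$ and consider, for a large integer $N$, the $N$ Lagrangian tori $\phi^0\Lambda_r = \Lambda_r, \phi^1\Lambda_r, \ldots, \phi^{N-1}\Lambda_r$. Each of these is a Hamiltonian image of $\Lambda_r$, since $\phi^j \in \Ham(M_a)$. The key observation is that if two indices $i < j$ in a block satisfy $\phi^i\Lambda_r \cap \phi^j\Lambda_r = \emptyset$, then applying $\phi^{-i}$ gives $\Lambda_r \cap \phi^{j-i}\Lambda_r = \emptyset$, i.e. $j - i \notin \cR_\phi$; contrapositively, consecutive differences landing outside $\cR_\phi$ force disjointness of the corresponding tori.

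Next I would set up the counting. Suppose toward a contradiction that the lower density of $\cR_\phi$ is strictly less than $1/k$. Then there is a sequence of windows $[1,N]$ (with $N \to \infty$) in which $\cR_\phi$ occupies fewer than $N/k$ integers. Equivalently, the complement $\mathbb{N} \setminus \cR_\phi$ has upper density greater than $1 - 1/k$ along these windows. I would then extract, from a window of length $N$, a subset $T = \{t_1 < t_2 < \cdots < t_{m}\} \subset \{0,1,\ldots,N\}$ of size $m = k+1$ with the property that all pairwise differences $t_j - t_i$ lie outside $\cR_\phi$: this is a Ramsey/Schur-type selection, but in fact it suffices to use the simpler greedy argument that a set avoiding $\cR_\phi$-differences of size close to $N$ must contain a long ``difference-free'' progression-like tuple --- more carefully, I would partition $\{0,\ldots,N\}$ into arithmetic-progression-like classes or use the fact that if $\cR_\phi$ has small lower density then one can find $k+1$ translates $\{\phi^{t}\Lambda_r\}_{t \in T}$ that are pairwise disjoint. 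By the previous paragraph, pairwise disjointness of $\{\phi^{t_i}\Lambda_r\}$ is exactly the statement that $\{\phi^{t_i} \circ \phi^{-t_1}(\Lambda_r)\}_{1 \le i \le k+1}$ is a Hamiltonian $(k+1)$-packing of $\Lambda_r$ in $M_a$, contradicting Theorem~\ref{cor-lp}.

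The main obstacle is the combinatorial extraction step: turning a density hypothesis on $\cR_\phi$ into an honest $(k+1)$-element set $T$ with all differences avoiding $\cR_\phi$. The cleanest route is to argue by contradiction on \emph{blocks}: partition $\mathbb{N}$ into consecutive blocks $B_1, B_2, \ldots$ each of length $L$ (for suitable large $L$), and observe that within each block the tori $\{\phi^t \Lambda_r : t \in B_s\}$ cannot contain $k+1$ pairwise disjoint members, so by Theorem~\ref{cor-lp} (applied after conjugating by $\phi^{-\min B_s}$) the ``disjointness graph'' on $B_s$ has no independent set of size $k+1$; a graph on $L$ vertices whose complement has independence number $\le k$ has, by a standard Turán-type bound, at least $\binom{L}{2} - k\cdot\binom{\lceil L/k\rceil}{2}$ edges in the complement forced to be non-edges, which translates into at least roughly $L/k$ elements of $\cR_\phi$ lying among the differences realized in $B_s$ --- and in particular among $\{1,\ldots,L-1\}$ shifted into each block. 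Summing over the first $\sim N/L$ blocks and letting $L \to \infty$ yields $|\cR_\phi \cap [1,N]| \gtrsim N/k$, hence $\underline{\mathrm{dens}}(\cR_\phi) \ge 1/k$. I would need to take care that the Turán bound is applied to the right graph (vertices $= $ time-shifts, edges $=$ disjointness of tori) and that the resulting lower bound on $|\cR_\phi|$ is expressed in terms of \emph{differences} occurring inside a block, which is why the block decomposition (rather than one global window) is the efficient formulation; letting the block size grow recovers the sharp constant $1/k$.
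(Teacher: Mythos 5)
Your reduction of the problem is the same as the paper's, and it is correct: let blue mean $\phi^i\Lambda_r\cap\phi^j\Lambda_r\neq\emptyset$ and red mean disjoint, so by translation invariance the edge $ij$ is red iff $|i-j|\notin\cR_\phi$, and Theorem~\ref{cor-lp} says the red graph has no clique of size $k+1$ (note a terminology slip in your write-up: you say the disjointness graph has no \emph{independent set} of size $k+1$, when the packing obstruction forbids a disjointness \emph{clique}; the intersection graph is the one with independence number $\le k$). The paper's proof of the recurrence theorem is exactly a combinatorial argument built on this coloring, so your general strategy is on target.

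The gap is in the Tur\'an/edge-counting step, which as sketched only gives density $\ge 1/(2k)$, not the claimed $1/k$, and no tweaking of block lengths repairs it. In a window of length $L$, Tur\'an gives $|E(G)|\gtrsim L^2/(2k)$ for the intersection graph $G$. But the conversion from edges to distinct differences is lossy: a single difference $d\in\cR_\phi$ accounts for up to $L-d\le L$ edges, so $L^2/(2k)\le|E(G)|\le L\cdot|\cR_\phi\cap[1,L-1]|$ gives only $|\cR_\phi\cap[1,L-1]|\ge L/(2k)$. This loss is not an artifact of sloppy constants: if $\cR_\phi\cap[1,L-1]=\{1,\ldots,\lceil L/(2k)\rceil\}$ the Tur\'an edge bound is already met, so no bound on $|E(G)|$ (or on the average degree) alone can force density $1/k$. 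What the proof needs is a lower bound on the \emph{degree of the specific vertex $0$}, since $\deg_G(0)=|\cR_\phi\cap[1,N]|$; and $0$ is systematically the vertex of smallest degree in a window (it only sees neighbors to one side), so average-degree estimates run the wrong way.

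The paper's argument supplies exactly this missing ingredient. Fix once and for all a \emph{maximal} red clique $Q$ in the infinite coloring; $|Q|\le k$ and $q:=\max Q$ is a fixed constant. Maximality forces every vertex outside $Q$ to have a blue edge into $Q$, so in the window $\{0,\ldots,mk\}$ the $(m-1)k+1$ vertices outside $Q$ send $\ge(m-1)k+1$ blue edges into the $\le k$ vertices of $Q$; by pigeonhole some $p\in Q$ has blue degree $\ge m$. Because $p\le q$ is bounded independently of $m$, translating by $-p$ loses at most $q$ neighbors and shows $\deg_G(0)\ge m-q$ in $[0,mk]$, i.e.\ $|\cR_\phi\cap[0,mk]|\ge m-q$, giving the sharp $1/k$ as $m\to\infty$. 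The advantage over your edge count is precisely that it locates a \emph{single} high-degree vertex at \emph{bounded} index, which can then be transported to $0$ by translation invariance; this is the step your proposal does not have.
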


We remark that while in \cite{GG-pseudorotations} a similar statement was proven for specific rigid Hamiltonian diffeomorphisms of complex projective spaces (pseudo-rotations) and arbitrary Lagrangians, we provide the first non-trivial higher-dimensional examples of Lagrangian submanifolds satisfying the recurrence property for {\em all} Hamiltonian diffeomorphisms.

\subsection{Area-preserving homeomorphisms of $S^2$}
It has been established by Cristofaro-Gardiner, Humili\`{e}re and Seyfaddini \cite{CGHS2}
that the group $G_{S^2}$ of symplectic homeomorphisms of the sphere possesses a non-trivial normal subgroup $G_{S^2}^{F}$ of homeomorphisms of finite energy. These are the homeomorphisms $\phi$ for which there exists a sequence of Hamiltonian diffeomorphisms $\psi_j$ and a constant $C > 0$ such that $d_{C^0}(\psi_j,\phi) \to 0$ and $d_{\mrm{Hofer}}(\psi_j,\id) \leq C.$ This is achieved by showing that radially symmetric Hamiltonian homeomorphisms of bounded energy satisfying a certain monotone twist condition must, in a precise sense, have finite Calabi invariant. We extend this result as follows.

Let $z:S^2 \to [-1/2,1/2]$ be the moment map for the standard $S^1$-action on $S^2.$ It is the height function for the standard embedding of $S^2$ in $\R^3$ scaled by a factor of $1/2.$ Let $h:[-1/2,1/2) \to \R$ be a smooth function that vanishes on $[-1/2,0].$ Consider $\phi \in G_{S^2}$ generated by $H = h \circ z:$ it is the $C^0$-limit of $\phi_i = \phi^1_{H_i} \in \Ham_c (\bb D^2) \subset \Ham(S^2)$ for $H_i = h_i \circ z$ for $h_i$ approximations to $h$ constant near $1/2$ which agree with $h$ on $[-1/2,1/2-1/i].$

\begin{thm}\label{thm: non-simplicity}
If $\phi \in G_{S^2}^F$	then the primitive of $h$ is a bounded function.
\end{thm}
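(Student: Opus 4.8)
\begin{pfs}
Let $P$ be the primitive of $h$ normalized by $P(-1/2)=0$; then $P\equiv 0$ on $[-1/2,0]$ and $P(t)=\int_0^t h(s)\,ds$ for $t\in[0,1/2)$, and the assertion is that $\sup_{t\in[-1/2,1/2)}|P(t)|<\infty$. The plan is to detect $P$ through Lagrangian estimators and compare with the Hofer bound built into the definition of $G_{S^2}^{F}$. Concretely, I would use the family of estimators $\zeta_c\colon \Ham(S^2)\to\R$, $c\in(0,1)$, constructed in Section \ref{sec-estimators} by Lagrangian Floer theory with Hamiltonian term in symmetric product orbifolds, associated to a simple closed curve bounding a disc of area $c$ (equivalently, after stabilization, to the torus $\Lambda_c\subset M_a$ of Section \ref{subsec-lp} and its incarnation in the symmetric product). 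The two properties I need, both proved there, are: \textbf{(i)} $\zeta_c$ is $1$-Lipschitz for $d_{\mrm{Hofer}}$ and vanishes at $\id$; and \textbf{(ii)} $\zeta_c$ is $C^0$-continuous, hence extends to $G_{S^2}$.

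The computational step is to evaluate $\zeta_c$ on the truncations $\phi_i=\phi^1_{H_i}$, $H_i=h_i\circ z$. Since $H_i$ depends only on the height, $\phi_i$ preserves every latitude circle, is supported in $\mathbb{D}^2$, and for such a rotationally symmetric autonomous map the computation underlying the isometric statement of Theorem \ref{thm: main-L} identifies $\zeta_c(\phi_i)$ with the fractional Calabi integral over the disc of area $c$:
\[
\zeta_c(\phi_i)=\int_{\{z\le c-1/2\}}H_i\,\omega=\int_{-1/2}^{\,c-1/2}h_i(s)\,ds .
\]
Since $h_i$ agrees with $h$ on $[-1/2,1/2-1/i]$, this gives $\zeta_c(\phi_i)=P(c-1/2)$ whenever $c<1-1/i$. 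Passing to the limit $i\to\infty$ with $\phi=C^0\text{-}\lim_i\phi_i$ and property \textbf{(ii)}, we obtain $\zeta_c(\phi)=P(c-1/2)$ for every $c\in(0,1)$. On the other hand, if $\phi\in G_{S^2}^{F}$, choose $\psi_j\in\Ham(S^2)$ with $d_{C^0}(\psi_j,\phi)\to 0$ and $d_{\mrm{Hofer}}(\psi_j,\id)\le C$; then $|\zeta_c(\psi_j)|\le C$ by \textbf{(i)}, and letting $j\to\infty$ with \textbf{(ii)} yields $|\zeta_c(\phi)|\le C$ for all $c\in(0,1)$. Therefore $\sup_{c\in(0,1)}|P(c-1/2)|\le C<\infty$, i.e.\ $P$ is bounded, as claimed.

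The main obstacle is property \textbf{(ii)}, the $C^0$-continuity of the estimators $\zeta_c$: this is where the Floer theory with Hamiltonian term in symmetric product orbifolds is essential, playing the role here that the $C^0$-rigidity of periodic Floer homology spectral invariants plays in \cite{CGHS2}, and it is needed for a cofinal family of areas $c\to 1$, i.e.\ for latitude circles shrinking onto the north pole---exactly the region where $P$ may fail to be bounded. A secondary technical point is to check that the identification of $\zeta_c$ with the fractional Calabi integral carries no profile-dependent correction term, so that the quantity genuinely detected is the primitive $P$ itself; this is where the rotational symmetry of $H=h\circ z$ enters, with the full family $\{\zeta_c\}_{c\in(0,1)}$ taking the place of the single fractional Calabi invariant together with the monotone twist hypothesis used in \cite{CGHS2}.
\end{pfs}
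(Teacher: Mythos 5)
Your overall strategy — read off the primitive $P$ via Lagrangian estimators, then bound those estimators uniformly using the Hofer bound in the definition of $G^F_{S^2}$ together with $C^0$-continuity — is the right shape of argument and is indeed what the paper does. But two things in your execution do not hold up, and the second is the essential one.

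First, the computation. You claim that for a single-circle estimator $\zeta_c$ (the invariant associated to a curve bounding a disc of area $c$), one gets the ``fractional Calabi'' value $\zeta_c(\phi_i)=\int_{-1/2}^{c-1/2}h_i(s)\,ds$. That is not what Lagrangian control gives. Lagrangian control for a configuration of circles evaluates the (mean-normalized) Hamiltonian \emph{pointwise} on each circle and averages: for $k$ circles it gives $\frac{1}{k}\sum_j h(z_j)-\bar H$, so for a single circle at height $c-1/2$ it returns $h(c-1/2)-\bar H$, not a primitive of $h$. The integral $\int_0^{1/2-B}h$ emerges only after you let $k\to\infty$ in the $k$-circle estimators $c^0_{k,B}$, turning the average over $k$ heights into a Riemann sum. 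Your one-curve estimator sees only the value of $h$ at one latitude, which contains no information about boundedness of $P$.

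Second, and more seriously, you defer $C^0$-continuity of $\zeta_c$ to ``the main obstacle,'' but for a \emph{single} estimator this property is not merely hard, it is false. The Calabi property forces $c^0_{k,B}(\psi)=-\mathrm{Cal}(\psi)$ for $\psi$ supported in a disc $U$ disjoint from the Lagrangian configuration, and the Calabi invariant is not $C^0$-continuous on $\Ham_c(U)$ (this is precisely what the simplicity conjecture is about). The paper's Theorem \ref{thm-extends} circumvents this by working with \emph{differences} $\tau_{k,B}=c^0_{k,B}-c^0_{1,1/2}$: the Calabi terms cancel, so $\tau_{k,B}(\psi)=0$ for $\psi$ supported in a small disc disjoint from both configurations, and then \cite[Lemma 3.11]{CGHS2} together with controlled additivity yields uniform continuity. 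Without this cancellation your step ``letting $j\to\infty$ with (ii)'' has nothing to stand on. So the missing idea is not a technical upgrade of a true statement; it is the replacement of a single estimator by a difference of two, which is what makes the $C^0$-continuity argument possible, together with passing $k\to\infty$ in $\tau_{k,B}$ to extract $\int_0^{1/2-B}h$.
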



The proof is based on a combination of \cite[Lemma 3.11]{CGHS2}, an interesting soft result relating $C^0$-smallness, supports, and the Hofer metric, with our Lagrangian
spectral estimators. It turns out that suitable linear combination of these invariants are continuous with respect to the $C^0$ topology on $\Ham(S^2)$ and extend to the group $G_{S^2}$.
In fact, our invariants give rise to an infinite series of pair-wise distinct normal subgroups
of  $G_{S^2}$ containing $G^F_{S^2}$, see Section \ref{sec: C0} for a precise formulation.


\begin{rmk}\label{rmk: discrepancy}
It is easy to see that if $h$ is bounded then $\phi \in G^F_{S^2}.$ It would be very interesting to bridge the discrepancy between this observation and Theorem \ref{thm: non-simplicity} further. Note that there do exist unbounded functions $h$ whose primitives are bounded. We expect that arguments along the lines of Sikorav's trick as in Section \ref{sec-lp} and lower bounds in terms of further linear combinations of the $c_{k,B}$ invariants could be useful to this end.
\end{rmk}

\begin{rmk}
We note that a suitable generalized limit construction (e.g., the Banach limit or the limit with respect to a non-principal ultrafilter) yields the existence of many homomorphisms $\phi \mapsto \mathcal{C}(\phi) \in \R$ on the group of $\phi$ as in Theorem \ref{thm: non-simplicity} that coincide with the Calabi invariant of $\phi \in \Ham_c(\D^2)$ if $H = h \circ z$ extends smoothly to $S^2.$
\end{rmk}


\color{black}

\medskip
\noindent
{\sc Organization of the paper:} In Section \ref{sec-estimators} we introduce and list
the properties of Lagrangian estimators, our main technical tool. The detailed construction
of the estimators via Lagrangian Floer theory in symmetric products, as well as the proof of their properties is presented in Section \ref{sec: prelim}.

Section \ref{sec-H} deals with flats in Hofer geometry. We prove Theorems \ref{thm: main-L} and \ref{thm: main2-L}, and present a result on infinite-dimensional flats in subgroups of Hamiltonian diffeomorphisms of the disc with vanishing Calabi invariant. Additionally,
we derive an estimate on the asymptotic Hofer norm which is used in the proof
of Theorem \ref{cor-lp} on Lagrangian packing. This proof can be found in Section \ref{sec-lp}.

In Section \ref{sec-lr} we deduce Theorem \ref{cor-lr} on Lagrangian recurrence
from the packing constraint by a combinatorial argument.

 In Section \ref{sec: C0} we prove the $C^0$-continuity of certain linear combinations of our invariants and present applications to normal subgroups of the group of area-preserving homeomorphisms of $S^2$.
\color{black}

We conclude the paper with a discussion of further directions in Section \ref{sec-discussion}.

\section{Lagrangian estimators}\label{sec-estimators}

In this section we introduce three slightly different
flavors of {\it Lagrangian estimators}, the main technical tool
of the present paper. These are functionals with a number of remarkable properties defined on the space of time-dependent Hamiltonians {\it (spectral estimators)}, on the group of Hamiltonian diffeomorphisms {\it (group estimators)}, and on the Lie algebra of functions on a symplectic manifold {\it (algebra estimators)}. They somewhat resemble, but are different from, partial symplectic quasi-morphisms and quasi-states introduced in \cite{EntovPolterovich-intersections}, respectively (see Remark \ref{rem-qs} below).

We rely on the theory of Lagrangian spectral invariants \cite{Leclercq-spectral,LeclercqZapolsky,FO3-qm} in the setting of bulk-deformed Lagrangian Floer homology \cite{FO3:book-vol12} for symplectic orbifolds \cite{ChoPoddar} and crucially its recent investigation \cite{MakSmith-links} in the context of Lagrangian links in symplectic four-manifolds. The output of our construction is a new invariant of a Hamiltonian diffeomorphism of $S^2$ that instead of being supported on a single non-displaceable Lagrangian circle is supported on a non-displaceable
configuration of pair-wise disjoint and (in general) individually displaceable
Lagrangian circles.

We start with a couple of preliminary notions and notations. Let $z:S^2 \to [-1/2,1/2]$ be the moment map for the standard $S^1= \R/\Z$-action on $S^2$. It is instructive to think that $S^2=S^2(1)$ is the round sphere of radius $1/2$ in $\R^3$ equipped with the standard area form divided by $\pi$, and $z$ is simply the vertical Euclidean coordinate.

Let $k\geq 1$ be a positive integer, and let $0<C<B$ be two positive rational numbers such that $2B + {(k-1)}C = 1$.

Denote by $\mathbb{L}^0_{k,B} \subset S^2$ be the configuration of $k$ disjoint circles given by $\mathbb{L}^0_{k,B} = \bigcup_{0\leq j < k} L^{0,j}_{k,B}$, where
\begin{equation}
\label{eq-lzero}
L^{0,j}_{k,B} = (z )^{-1}(-1/2+ B+j C)\;.
\end{equation}

Let $0<a < B-C$ be a rational number. Consider the symplectic manifold $M_a = S^2 \times S^2(2a)$. Denote by $S$ the equator of $S^2(2a)$ and put $\mathbb{L}_{k,B} = \mathbb{L}^0_{k,B} \times S,$ $L^j_{k,B} = L^{0,j}_{k,B} \times S$ for $0 \leq j < k$.

For an open subset $U$ of a $2n$-dimensional symplectic manifold and a Hamiltonian $H$ supported in $[0,1] \times U$ we define the Calabi invariant as \[ \mrm{Cal}(\{\phi^t_H\}) = \int_{0}^{1} \int_U H_t \, \om^n\;. \] Moreover, $\mrm{Cal}$ defines a homomorphism $\mrm{Cal}:\til{\Ham}_c(U) \to \R.$

Finally, we call an open set $U$ {\em displaceable from a subset} $V$ if there exists
a Hamiltonian diffeomorphism $\theta$ such that $\theta(U) \cap V = \emptyset.$

Now we are ready to formulate the main result of the present section.

\blue{

\begin{thm} [Lagrangian spectral estimators]\label{thm: main spec k}

For each $k,B,a$ as above, with $B, a$ rational, there exists a map  \[c_{k,B}: \sm{[0,1] \times M_a, \R} \to \R\] satisfying the following properties:

\begin{enumerate}[label=\arabic*.]
	\item (Hofer-Lipschitz) For each $G, H \in \sm{[0,1] \times M_a, \R},$ \[|c_{k,B}(G)- c_{k,B}(H)| \leq \int_0^1  \max|G_t - H_t|\,dt.\]
	\item (Monotonicity) If $G, H \in \sm{[0,1] \times M_a, \R}$ satisfy $G \leq H$ as functions, then \[ c_{k,B}(G) \leq c_{k,B}(H).\]
	\item (Normalization) For each $H \in \sm{[0,1] \times M_a, \R}$ and $b\in \sm{[0,1],\R},$ \[c_{k,B}(H+b) = c_{k,B}(H) + \int_{0}^{1} b(t)\,dt.\]
	\item (Lagrangian control) If $(H_t)|_{L^j_{k,B}} \equiv c_j(t) \in \R$ for all $0 \leq j < k$ and $t \in [0,1],$ then \[ c_{k,B}(H) = \frac{1}{k} \sum_{0 \leq j < k} \int_{0}^{1} c_j(t)\,dt.\]
	\item (Independence of Hamiltonian) For $H \in \sm{[0,1] \times M_a, \R}$ with zero mean, the value \[c_{k,B}(H) = c_{k,B}(\phi_H)\] depends only on the class \[\phi_H = [\{ \phi^t_H \}] \in \til{\Ham}(M_a)\] in the universal cover of $\Ham(M_a)$ generated by $H.$
	\item (Subadditivity) For all $\phi,\psi \in \til{\Ham}(M_a),$ \[ c_{k,B}(\phi \psi) \leq  c_{k,B}(\phi) + c_{k,B}(\psi).\]
	
	\item (Calabi property) \label{c prop: Cal} If a Hamiltonian $H \in \sm{[0,1] \times M_a,\R}$ is supported in an open set of the form $[0,1] \times U$, where $U \subset M_a$ is disjoint from $\mathbb{L}_{k,B},$ then \[c_{k,B}(\phi_H) = - \frac{1}{\vol(M_a)} \Cal(\{\phi^t_H\}).\]
	
	\item (Controlled additivity) \label{c prop: controlled add} If $\psi = \phi_H \in \til{\Ham}(M_a)$ for a Hamiltonian $H \in \sm{[0,1] \times M_a,\R}$ supported in $[0,1] \times U $ for an open set $U \subset M_a$ disjoint from $\mathbb{L}_{k,B},$ then for all $\phi \in \til{\Ham}(M_a)$ \[ c_{k,B}(\phi \psi) = c_{k,B}(\phi) + c_{k,B}(\psi).\]

\end{enumerate}

\end{thm}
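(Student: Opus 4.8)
The plan is to construct $c_{k,B}$ as a Lagrangian spectral invariant with Hamiltonian term attached to the Lagrangian torus $\mathbb{L}_{k,B} = \mathbb{L}^0_{k,B}\times S$, viewed — following Mak–Smith — as a monotone Lagrangian submanifold of the symmetric product orbifold $\mathrm{Sym}^k(M_a)$ under the natural identification of a disjoint union of $k$ parallel circles times $S$ with a single Lagrangian torus in the $k$-fold symmetric product. Concretely, I would first set up, in Section~\ref{sec: prelim}, the bulk-deformed Lagrangian Floer homology $HF((\mathbb{L}_{k,B}; \mathfrak{b}), H)$ for a suitable bulk class $\mathfrak{b}$ making this torus non-displaceable and with nonzero (twisted) self-Floer homology; here one invokes the orbifold Floer theory of Cho–Poddar together with the FOOO machinery of bulk deformations. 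The invariant $c_{k,B}(H)$ is then defined to be the spectral number $\ell(\,[\mathbb{L}_{k,B}]\,; H)$, i.e. the infimum of the actions of Floer cycles representing the fundamental class, normalized so that for $H \equiv 0$ one gets the appropriate average of the action values of the constant Hamiltonian chords. The fact that $\mathbb{L}_{k,B}$ is disjoint and each component is individually displaceable is irrelevant: what matters is that the \emph{product} torus in the symmetric product is non-displaceable, which is exactly the rigidity discovered in \cite{MakSmith-links}.

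Next I would verify properties 1--6, which are the ``formal'' properties of Lagrangian spectral invariants and follow by now-standard arguments adapted to the orbifold setting. The Hofer-Lipschitz estimate (1) and monotonicity (2) come from the usual continuation-map comparison of action filtrations; the shift property (3) is immediate from the definition of the action functional with Hamiltonian term, since adding a time-dependent constant $b(t)$ shifts all actions by $\int_0^1 b(t)\,dt$ (the factor disappears because we work with a single Lagrangian, not a genuine symmetric average — more precisely the $\frac1k$ in (4) comes from the identification of chords in $\mathrm{Sym}^k$ with $k$-tuples of chords in $M_a$). For (4), Lagrangian control, one uses that when $H$ is locally constant equal to $c_j(t)$ on each component $L^j_{k,B}$, the Hamiltonian chord lying on $\mathbb{L}_{k,B}$ has action exactly $\frac1k\sum_j\int_0^1 c_j(t)\,dt$, and a sandwiching argument between $\min$ and $\max$ via monotonicity pins $c_{k,B}(H)$ to this value. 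Property (5), independence of the Hamiltonian within a homotopy class of paths with fixed endpoints, is the standard invariance of spectral invariants under homotopies rel endpoints, using zero-mean normalization; and subadditivity (6) is the triangle inequality for the pair-of-pants product on Floer homology, applied to the fundamental class which is a unit.

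The substantive part is properties 7 and 8, the Calabi property and controlled additivity. For (7): if $H$ is supported in $[0,1]\times U$ with $U$ disjoint from $\mathbb{L}_{k,B}$, the corresponding diffeomorphism of $\mathrm{Sym}^k(M_a)$ is supported away from the Lagrangian torus, so one can compare $c_{k,B}(\phi_H)$ with the spectral invariant of the zero Hamiltonian via a Floer continuation in which no chord ever meets $U$; the action changes by $-\frac{1}{\mathrm{vol}(M_a)}\mathrm{Cal}(\{\phi_H^t\})$ because the normalization we imposed corresponds to the unique zero-mean representative, and passing to a compactly supported (non-zero-mean) representative shifts the action by the average value, which is precisely the Calabi invariant divided by the total volume. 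Here one must be careful that $U$ can be displaced \emph{from} $\mathbb{L}_{k,B}$, or at least that the Floer datum can be chosen so the relevant moduli spaces avoid $U$ — this is where I expect the main technical obstacle, because one needs a quantitative version (as in the ``Calabi quasi-morphism'' arguments of Entov–Polterovich and their Lagrangian analogues) ensuring that the spectral invariant really only sees the part of $H$ localized near the Lagrangian, and in the orbifold/symmetric-product setting one must check that the fixed-point/chord analysis is not disrupted by the diagonal strata. Property (8) then follows from (7) combined with the subadditivity (6) applied in both directions: $c_{k,B}(\phi\psi)\le c_{k,B}(\phi)+c_{k,B}(\psi) = c_{k,B}(\phi) - \frac1{\mathrm{vol}}\mathrm{Cal}(\psi)$, and conversely $c_{k,B}(\phi) = c_{k,B}(\phi\psi\cdot\psi^{-1})\le c_{k,B}(\phi\psi)+c_{k,B}(\psi^{-1}) = c_{k,B}(\phi\psi) + \frac1{\mathrm{vol}}\mathrm{Cal}(\psi)$, using that $\mathrm{Cal}(\psi^{-1}) = -\mathrm{Cal}(\psi)$ and that (7) applies to $\psi^{-1}$ as well since it has the same support; the two inequalities force equality. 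The one subtlety to track throughout is the precise normalization constant and sign in the definition of $c_{k,B}$ so that all eight properties hold simultaneously; I would fix it once and for all by demanding that (3) and (4) hold on the nose and then \emph{derive} the constant $1/\mathrm{vol}(M_a)$ in (7) from that choice.
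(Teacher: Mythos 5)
Your overall construction matches the paper's: pull back a Hamiltonian $F$ on $M_a$ to the diagonally invariant Hamiltonian $F\oplus\cdots\oplus F$ on $M_a^k$, descend to the symmetric product orbifold $X=\mathrm{Sym}^k(M_a)$, take the orbifold Lagrangian spectral invariant of the torus $\cL_{k,B}$ with a suitable bulk $\mathbf b$ and bounding cochain $b$, and normalize by $1/k$; then properties 1--6 are formal. The divergence is in properties 7 and 8. For the Calabi property (7) you anticipate a ``main technical obstacle'' and propose a Floer continuation in which no chord meets $U$ --- but that machinery is unnecessary. The paper derives (7) purely formally from Lagrangian control (4) and Normalization (3): if $H$ is supported away from $\bb L_{k,B}$ then $H_t|_{L^j_{k,B}}\equiv 0$, so Lagrangian control gives $c_{k,B}(H)=0$, and passing to the mean-zero representative $G=H+b$ with $b(t)=-\tfrac{1}{\vol(M_a)}\int H_t\,\om^2$ and applying Normalization gives $c_{k,B}(\phi_H)=c_{k,B}(G)=\int_0^1 b\,dt=-\tfrac{1}{\vol(M_a)}\Cal(\{\phi_H^t\})$. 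Conversely, for Controlled additivity (8) your sandwich argument --- subadditivity plus $c_{k,B}(\psi^{-1})=-c_{k,B}(\psi)$ from (7) applied to $\psi^{-1}$, which is also generated by a Hamiltonian supported in $U$ --- is correct and arguably cleaner than the paper's route, which instead shows via rational spectrality and Hofer-Lipschitz that $c_{k,B}(F\#sH)$ is locally constant (hence constant) in $s\in[0,1]$, giving $c_{k,B}(F\#H)=c_{k,B}(F)$ and then normalizes. So: same construction, a mild overcomplication on (7) that you should replace with the formal deduction from (3)--(4), and a legitimate alternative proof of (8).
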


\begin{rmk}
In fact, the controlled additivity property of $c_{k,B}$ holds under the more general assumption that \[(H_t)|_{L^j_{k,B}} \equiv c_j(t) \in \R\] for all $0\leq j <k$ and $t \in [0,1].$ However, since we do not use this stronger version, we chose to omit it for simplicity of exposition.
\end{rmk}

In turn, this implies by homogenization that the following holds.
}

\begin{thm} [Lagrangian group estimators]\label{thm: main k}

For each $k,B,a$ as above, with $B, a$ rational, there exists a map  \[\mu_{k,B}: \sm{[0,1] \times M_a, \R} \to \R\] satisfying the following properties:

\begin{enumerate}[label=\arabic*.]
	\item (Hofer-Lipschitz) For each $G, H \in \sm{[0,1] \times M_a, \R},$ \[|\mu_{k,B}(G)- \mu_{k,B}(H)| \leq \int_0^1  \max|G_t - H_t|\,dt.\]
	\item (Monotonicity) If $G, H \in \sm{[0,1] \times M_a, \R}$ satisfy $G \leq H$ as functions, then \[ \mu_{k,B}(G) \leq \mu_{k,B}(H).\]
	\item (Normalization) For each $H \in \sm{[0,1] \times M_a, \R}$ and $b\in \sm{[0,1],\R},$ \[\mu_{k,B}(H+b) = \mu_{k,B}(H) + \int_{0}^{1} b(t)\,dt.\]
	\item (Lagrangian control) If $(H_t)|_{L^j_{k,B}} \equiv c_j(t) \in \R$ for all $0 \leq j < k$ and $t \in [0,1],$ then \[ \mu_{k,B}(H) = \frac{1}{k} \sum_{0 \leq j < k} \int_{0}^{1} c_j(t)\,dt.\]
	\item (Independence of Hamiltonian) For $H \in \sm{[0,1] \times M_a, \R}$ with zero mean, the value \[\mu_{k,B}(H) = \mu_{k,B}(\phi_H)\] depends only on the class \[\phi_H = [\{ \phi^t_H \}] \in \til{\Ham}(M_a)\] in the universal cover of $\Ham(M_a)$ generated by $H.$
	\item (Conjugation invariance) \label{mu prop: conj} For all $\phi,\psi \in \til{\Ham}(M_a)$ we have \[ \mu_{k,B}(\psi \phi \psi^{-1}) = \mu_{k,B}(\phi).\]
	\item (Positive homogeneity) \label{mu prop: homog} For all $\phi \in \til{\Ham}(M_a)$ and $m \in \Z_{\geq 0}$ \[ \mu_{k,B}(\phi^m) = m\cdot \mu_{k,B}(\phi).\]
	\item (Commutative subadditivity) If $\phi,\psi \in \til{\Ham}(M_a)$ commute, $\phi\psi = \psi\phi,$ then \[ \mu_{k,B}(\phi \psi) \leq  \mu_{k,B}(\phi) + \mu_{k,B}(\psi).\]
	\item (Calabi property) \label{mu prop: Cal} If a Hamiltonian $H \in \sm{[0,1] \times M_a,\R}$ is supported in an open set of the form $[0,1] \times U$, where $U \subset M_a$ is displaceable from $\mathbb{L}_{k,B},$ then \[\mu_{k,B}(\phi_H) = - \frac{1}{\vol(M_a)} \Cal(\{\phi^t_H\}).\]
\end{enumerate}

\end{thm}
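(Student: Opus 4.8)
The statement is to follow from Theorem~\ref{thm: main spec k} ``by homogenization'', and that is exactly the plan: to build $\mu_{k,B}$ out of $c_{k,B}$ the way one builds a homogeneous quasimorphism out of a quasimorphism. The first move is to pass to the group level. By property~5 (Independence of Hamiltonian) of Theorem~\ref{thm: main spec k}, restricting $c_{k,B}$ to zero-mean Hamiltonians descends to a function $c_{k,B}\colon\til{\Ham}(M_a)\to\R$, defined on all of $\til{\Ham}(M_a)$ since every class has a zero-mean generator; in these terms properties~1,~4,~6,~7 read as a Hofer--Lipschitz bound $|c_{k,B}(\phi)-c_{k,B}(\psi)|\le d_{\mrm{Hofer}}(\phi,\psi)$, the Lagrangian control identity, subadditivity $c_{k,B}(\phi\psi)\le c_{k,B}(\phi)+c_{k,B}(\psi)$, and the equality $c_{k,B}=-\vol(M_a)^{-1}\Cal$ on $\til{\Ham}_c(U)$ whenever $U$ is disjoint from $\mathbb{L}_{k,B}$; in particular $c_{k,B}(\id)=0$ by Lagrangian control with all $c_j\equiv 0$. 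For $\phi\in\til{\Ham}(M_a)$ the sequence $m\mapsto c_{k,B}(\phi^m)$ is then subadditive, so by Fekete's lemma $\mu_{k,B}(\phi):=\lim_{m\to\infty}m^{-1}c_{k,B}(\phi^m)=\inf_{m\ge1}m^{-1}c_{k,B}(\phi^m)$ exists, and it is finite because $|c_{k,B}(\phi^m)|\le d_{\mrm{Hofer}}(\phi^m,\id)\le m\,d_{\mrm{Hofer}}(\phi,\id)$. Finally I would extend this to all Hamiltonians by writing $H=\overbar H+\beta_H(t)$, where $\beta_H(t)$ is the mean of $H_t$ and $\overbar H$ is zero-mean, and setting $\mu_{k,B}(H):=\mu_{k,B}(\phi_{\overbar H})+\intoi\beta_H(t)\,dt$; this makes properties~3 and~5 hold by construction.

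Next I would verify the Hofer--Lipschitz, Monotonicity, and Lagrangian control properties of $\mu_{k,B}$ by passing to the limit. After a time-reparametrization that changes neither $\phi_H\in\til{\Ham}(M_a)$ nor $\intoi\beta_H(t)\,dt$, one may assume $H_t$ vanishes for $t$ near $0$ and $1$; then $H^{(m)}(t,x):=m\,H(mt,x)$ is a smooth Hamiltonian generating $\phi_H^m$, one has $\overbar{H^{(m)}}=(\overbar H)^{(m)}$, and a short computation gives $m^{-1}c_{k,B}(H^{(m)})\to\mu_{k,B}(H)$. Since $\intoi\max|G^{(m)}_t-H^{(m)}_t|\,dt=m\intoi\max|G_t-H_t|\,dt$, since $G\le H$ implies $G^{(m)}\le H^{(m)}$ (using a common reparametrization with nonnegative speed), and since $(H_t)|_{L^j_{k,B}}\equiv c_j(t)$ implies $(H^{(m)}_t)|_{L^j_{k,B}}\equiv m\,c_j(mt)$, applying properties~1,~2,~4 of Theorem~\ref{thm: main spec k} to $G^{(m)},H^{(m)}$, dividing by $m$ and letting $m\to\infty$ yields the same three properties for $\mu_{k,B}$.

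For the group-theoretic properties I would argue as follows. Positive homogeneity is immediate: $\mu_{k,B}(\phi^n)=\lim_m m^{-1}c_{k,B}(\phi^{nm})=n\lim_m(nm)^{-1}c_{k,B}(\phi^{nm})=n\,\mu_{k,B}(\phi)$ for $n\ge1$, and $\mu_{k,B}(\id)=0$. Commutative subadditivity follows because commuting $\phi,\psi$ satisfy $(\phi\psi)^m=\phi^m\psi^m$, hence $c_{k,B}((\phi\psi)^m)\le c_{k,B}(\phi^m)+c_{k,B}(\psi^m)$ by subadditivity; divide by $m$. Conjugation invariance follows from $(\psi\phi\psi^{-1})^m=\psi\phi^m\psi^{-1}$ together with two applications of subadditivity, which give $c_{k,B}(\psi\phi^m\psi^{-1})\le c_{k,B}(\phi^m)+\bigl(c_{k,B}(\psi)+c_{k,B}(\psi^{-1})\bigr)$ with the last term independent of $m$; dividing by $m$ gives $\mu_{k,B}(\psi\phi\psi^{-1})\le\mu_{k,B}(\phi)$, and replacing $(\phi,\psi)$ by $(\psi\phi\psi^{-1},\psi^{-1})$ gives the reverse inequality.

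The step that needs real care is the Calabi property, where $U$ is only assumed displaceable from $\mathbb{L}_{k,B}$, not disjoint from it. I would pick $\theta\in\Ham(M_a)$ with $\theta(U)\cap\mathbb{L}_{k,B}=\emptyset$ and a lift $\Theta\in\til{\Ham}(M_a)$; then $\Theta\phi_H\Theta^{-1}$ is generated by $H\circ\theta^{-1}$, which, together with all its iterates $(H\circ\theta^{-1})^{(m)}$, is supported in $[0,1]\times\theta(U)$. By conjugation invariance of $\mu_{k,B}$ (just established), $\mu_{k,B}(\phi_H)=\mu_{k,B}(\phi_{H\circ\theta^{-1}})$; and since $(H\circ\theta^{-1})^{(m)}$ remains supported away from $\mathbb{L}_{k,B}$, the disjoint-support Calabi property of $c_{k,B}$ and the fact that $\Cal$ is a homomorphism give $m^{-1}c_{k,B}(\phi_{H\circ\theta^{-1}}^m)=-\vol(M_a)^{-1}\Cal(\phi_{H\circ\theta^{-1}})$ for every $m$, so that $\mu_{k,B}(\phi_H)=-\vol(M_a)^{-1}\Cal(\phi_{H\circ\theta^{-1}})=-\vol(M_a)^{-1}\Cal(\phi_H)$, the last equality because $\theta$ preserves the symplectic volume form. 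I expect the homogenization to be entirely routine --- the substance is Theorem~\ref{thm: main spec k} --- with the only genuine wrinkles being the bookkeeping between general and zero-mean Hamiltonians in the first two paragraphs, and, in the Calabi step, checking that conjugation by a lift of the displacing map interacts correctly with the supported Calabi homomorphism; neither is a serious obstacle.
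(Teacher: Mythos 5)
Your proof is correct and follows the same route as the paper's: define $\mu_{k,B}$ by homogenizing $c_{k,B}$ via Fekete's lemma and push each property of Theorem~\ref{thm: main spec k} through the limit, with conjugation invariance deduced from $c_{k,B}(\phi^m) - q_{k,B}(\psi) \le c_{k,B}(\psi\phi^m\psi^{-1}) \le c_{k,B}(\phi^m) + q_{k,B}(\psi)$ and division by $m$. One point worth flagging: the paper's written proof of the Calabi property only treats the case $U$ \emph{disjoint} from $\mathbb{L}_{k,B}$ (combining Lagrangian control with normalization on $G=H+b$), whereas the theorem asserts it for $U$ merely \emph{displaceable} from $\mathbb{L}_{k,B}$; the upgrade via conjugation by a lift of the displacing map, which you carry out explicitly (including the change-of-variables argument showing $\Cal(\phi_{H\circ\theta^{-1}})=\Cal(\phi_H)$), is left implicit in the paper, so your proposal is actually slightly more complete on this step.
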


\medskip\noindent
The proof is given in Section \ref{subsubsec: proof A} below.

\begin{rmk}
The rationality of $B, a$ is necessary for the Lagrangian control property and the Calabi property. In our applications to Hofer's geometry this will not lead to a loss of generality because $\Q$ is dense in $\R.$ Note also that $\psi \phi \psi^{-1}$ in the Conjugation invariance property (Property \ref{mu prop: conj}) depends only on $\phi$ and the image of $\psi$ under the natural map $\til{\Ham}(M_a) \to \Ham(M_a).$ We note that if $\psi^2 = \id$ and $\phi\psi = \psi\phi$ in $\til{\Ham}(M_a)$ then \begin{equation}\label{eq: commutative additivity torsion} \mu_{k,B}(\phi \psi) = \mu_{k,B}(\phi).\end{equation} Indeed commutative subadditivity implies that $-\mu(\psi) \leq \mu(\phi \psi) - \mu(\phi) \leq \mu(\psi),$ while positive homogeneity yields $\mu(\psi)=0.$ Moreover, note that by the Hofer-Lipschitz property one can naturally extend $\mu_{k,B}$ to a map $C^0([0,1] \times M, \R) \to \R$ satisfying a directly analogous list of properties.
\end{rmk}

\begin{rmk}
While it is not directly pertinent to our applications in this paper, it would be interesting to explicitly calculate the restriction of $c_{k,B}$ and $\mu_{k,B}$ to $\pi_1(\Ham(M_a)).$ It will be determined by the valuation of a suitable Seidel representation evaluated on Gromov's loop of infinite order in $\pi_1(\Ham(M_a))$ and respectively its homogenization.
\end{rmk}

It turns out to be useful to consider the restriction of $\mu_{k,B}$ to the space $\sm{M_a,\R}$ of Hamiltonians which do not depend on time. We recall that the Poisson bracket of two functions $F,G \in \sm{M_a,\R}$ is defined as $\{F,G\} = dF(X_G).$ The following list of properties is a direct consequence of those in Theorem \ref{thm: main k}: note that quasi-additivity and vanishing follow from commutative subadditivity and the Calabi property of $\mu_{k,B}.$


\begin{thm}[Lagrangian algebra estimators] \label{thm: main k qs}
	
	The map $\sm{M_a, \R} \to  \sm{[0,1] \times M_a, \R},$ $F \mapsto H(t,x) = F(x),$ induces a map \[\zeta_{k,B}: \sm{M_a, \R} \to \R\] \[\zeta_{k,B}(F) = \mu_{k,B}(H)\] which satisfies the following properties:
	
	\begin{enumerate}[label=\arabic*.]
		\item ($C^0$-Lipschitz) For each $G, H \in \sm{M_a, \R},$ \[|\zeta_{k,B}(G)- \zeta_{k,B}(H)| \leq  |G - H|_{C^0}.\]
		\item (Monotonicity) If $G, H \in \sm{M_a, \R}$ satisfy $G \leq H$ as functions, then \[ \zeta_{k,B}(G) \leq \zeta_{k,B}(H).\]
		\item (Normalization) \[\zeta_{k,B}(1) = 1.\]
		\item (Lagrangian control) If $H|_{L^j_{k,B}} \equiv c_j \in \R$ for all $0 \leq j < k,$ then \[ \zeta_{k,B}(H) = \frac{1}{k} \sum_{0 \leq j < k} c_j.\]
		\item (Invariance) \label{mu prop: invar} For all $\psi \in {\Ham}(M_a)$ and $H \in \sm{M_a,\R}$ we have \[ \zeta_{k,B}(H \circ \psi^{-1}) = \zeta_{k,B}(H).\]
		\item (Positive homogeneity) \label{mu prop: homog-1} For all $H \in \sm{M_a,\R}$ and $t \in \R_{\geq 0},$ \[ \zeta_{k,B}(t \cdot H) = t\cdot \zeta_{k,B}(H).\]
		\item (Quasi-additivity and vanishing) If $F,G \in \sm{M_a,\R}$ Poisson-commute, $\{F,G\} = 0,$ then \[ \zeta_{k,B}(F+G) \leq  \zeta_{k,B}(F) + \zeta_{k,B}(G)\] and if in addition $G$ is supported in $U$ displaceable from $\mathbb{L}_{k,B},$ then \[ \zeta_{k,B}(F+G) =  \zeta_{k,B}(F) + \zeta_{k,B}(G) = \zeta_{k,B}(F).\]
	\end{enumerate}
	
\end{thm}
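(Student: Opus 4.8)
The plan is to deduce each listed property of $\zeta_{k,B}$ from its counterpart for $\mu_{k,B}$ in Theorem~\ref{thm: main k}, using two bookkeeping reductions. First, a time-independent Hamiltonian $H(t,x)=F(x)$ satisfies $\int_0^1\max_{M_a}|H_t|\,dt=|F|_{C^0}$, satisfies $\int_0^1 c_j(t)\,dt=c_j$ whenever $F|_{L^j_{k,B}}\equiv c_j$, and has extension $H\leq H'$ iff $F\leq F'$; hence the $C^0$-Lipschitz, Monotonicity and Lagrangian control properties of $\zeta_{k,B}$ are immediate translations of the Hofer-Lipschitz, Monotonicity and Lagrangian control properties of $\mu_{k,B}$. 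Since $\mu_{k,B}(0)=0$ (Lagrangian control with all $c_j=0$), the Normalization property of $\mu_{k,B}$ applied with $b\equiv 1$ gives $\zeta_{k,B}(1)=\mu_{k,B}(1)=1$. Second, writing $F_0=F-\overline{F}$ with $\overline{F}=\vol(M_a)^{-1}\int_{M_a}F\,\om^2$, the Normalization and Independence of Hamiltonian properties of $\mu_{k,B}$ give
\[\zeta_{k,B}(F)=\mu_{k,B}(F)=\mu_{k,B}(\phi_{F_0})+\overline{F},\]
where $\phi_{F_0}\in\til{\Ham}(M_a)$ denotes the class of the isotopy $\{\phi^t_{F_0}\}_{t\in[0,1]}=\{\phi^t_F\}_{t\in[0,1]}$. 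The remaining properties then follow by applying the appropriate property of $\mu_{k,B}$ to $\phi_{F_0}$ and carrying the constant $\overline{F}$.

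For Invariance, $H\circ\psi^{-1}$ has the same mean $\overline{H}$ (since $\psi$ preserves $\om^2$), and $(H\circ\psi^{-1})_0=H_0\circ\psi^{-1}$ generates the isotopy $\{\psi\phi^t_{H_0}\psi^{-1}\}$, so $\phi_{(H\circ\psi^{-1})_0}=\psi\phi_{H_0}\psi^{-1}$ in $\til{\Ham}(M_a)$ for any lift of $\psi$; Conjugation invariance of $\mu_{k,B}$ (together with the remark after Theorem~\ref{thm: main k} that this conjugate depends only on the image of $\psi$ in $\Ham(M_a)$) yields $\zeta_{k,B}(H\circ\psi^{-1})=\zeta_{k,B}(H)$. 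For Positive homogeneity with $m\in\Z_{\geq 0}$, the extension of $mF$ has mean $m\overline{F}$ and $(mF)_0=mF_0$ generates $(\phi_{F_0})^m$, so Positive homogeneity of $\mu_{k,B}$ gives $\zeta_{k,B}(mF)=m\,\zeta_{k,B}(F)$; for a non-negative rational $p/q$, comparing $\zeta_{k,B}(q\cdot(p/q)F)=\zeta_{k,B}(pF)$ gives $\zeta_{k,B}((p/q)F)=(p/q)\,\zeta_{k,B}(F)$, and the $C^0$-Lipschitz property upgrades this to all real $t\geq 0$.

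For Quasi-additivity, if $\{F,G\}=0$ then $[X_F,X_G]=0$, the flows of $F_0$ and $G_0$ commute, $\{\phi^t_{F_0}\phi^t_{G_0}\}$ is the isotopy generated by $F_0+G_0$, and the homotopy across the square $(u,v)\mapsto\phi^u_{F_0}\phi^v_{G_0}$ shows $\phi_{F_0+G_0}=\phi_{F_0}\phi_{G_0}=\phi_{G_0}\phi_{F_0}$ in $\til{\Ham}(M_a)$; Commutative subadditivity of $\mu_{k,B}$ then gives $\zeta_{k,B}(F+G)\leq\zeta_{k,B}(F)+\zeta_{k,B}(G)$. If moreover $\supp G\subset U$ with $U$ displaceable from $\mathbb{L}_{k,B}$, the Calabi property of $\mu_{k,B}$ gives $\mu_{k,B}(\phi_{G_0})=-\vol(M_a)^{-1}\Cal(\{\phi^t_G\})=-\vol(M_a)^{-1}\int_U G\,\om^2=-\overline{G}$, the last step because $\supp G\subset U$; hence $\zeta_{k,B}(G)=\mu_{k,B}(\phi_{G_0})+\overline{G}=0$, and likewise $\zeta_{k,B}(-G)=0$. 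Then $\zeta_{k,B}(F+G)\leq\zeta_{k,B}(F)+\zeta_{k,B}(G)=\zeta_{k,B}(F)$, while $\zeta_{k,B}(F)=\zeta_{k,B}((F+G)+(-G))\leq\zeta_{k,B}(F+G)+\zeta_{k,B}(-G)=\zeta_{k,B}(F+G)$, valid since $\{F+G,-G\}=0$ and $\supp(-G)\subset U$; combining these gives $\zeta_{k,B}(F+G)=\zeta_{k,B}(F)=\zeta_{k,B}(F)+\zeta_{k,B}(G)$.

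I expect the only genuine subtlety to be the consistent bookkeeping of the mean $\overline{F}$: the symbol $\mu_{k,B}$ refers simultaneously to a functional on Hamiltonians and (for zero-mean Hamiltonians) to a functional on $\til{\Ham}(M_a)$, and one must pass between the two correctly through conjugation, iteration and the Calabi identity; all of this rests on the elementary facts that Poisson-commuting autonomous functions have commuting Hamiltonian flows, and that the flow of their sum, the $m$-th power of a flow, and the conjugate of a flow represent respectively the product, the $m$-th power and the conjugate of the corresponding classes in $\til{\Ham}(M_a)$. With these reductions in place, each property of $\zeta_{k,B}$ becomes a one-line consequence of the matching property of $\mu_{k,B}$.
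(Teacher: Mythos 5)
Your proof is correct and takes the same route the paper does: the paper dispatches this theorem in a single sentence ("direct consequence of those in Theorem~\ref{thm: main k}: note that quasi-additivity and vanishing follow from commutative subadditivity and the Calabi property"), and your write-up simply makes explicit the bookkeeping that the paper leaves implicit, via the reduction $\zeta_{k,B}(F)=\mu_{k,B}(\phi_{F_0})+\overline{F}$ and the translation of flows of autonomous Hamiltonians into products, powers and conjugates in $\til{\Ham}(M_a)$. One detail you supply that the paper's one-line hint does not mention is the upgrade of positive homogeneity from $m\in\Z_{\geq 0}$ (which is all that Theorem~\ref{thm: main k} gives) to all $t\in\R_{\geq 0}$ via rationals plus $C^0$-Lipschitz; that step is genuinely needed, and your handling of it is the standard and correct one.
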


Finally, we consider the restriction of the invariants $\mu_{k,B}, \zeta_{k,B}$ to Hamiltonians on $S^2$ by means of the stabilization by the zero Hamiltonian.

\begin{thm}\label{thm: invariants sphere}
The map $\sm{[0,1] \times S^2} \to \sm{[0,1] \times M_a},$ $F \mapsto H = F \oplus 0,$ that is, $H(t,x,y) = F(t,x)$ induces maps \[\blue{c^0_{k,B}: \sm{[0,1] \times S^2,\R} \to \R,}\]\[\mu^0_{k,B}: \sm{[0,1] \times S^2,\R} \to \R,\]\[\zeta^0_{k,B}: \sm{S^2, \R} \to \R,\] by means of \[c^0_{k,B}(F) = c_{k,B}(H),\;\;\mu^0_{k,B}(F) = \mu_{k,B}(H),\;\; \zeta^0_{k,B}(F) = \zeta_{k,B}(H).\] These maps satisfy the corresponding lists of properties as in Theorems \ref{thm: main spec k},\ref{thm: main k},\ref{thm: main k qs} with $M_a$ replaced by $S^2,$ $L^j_{k,B}$ by $L^{0,j}_{k,B},$ and $\mathbb{L}_{k,B}$ by $\mathbb{L}^{0}_{k,B}$ everywhere.

In addition, \blue{$c^0_{k,B}$ and} $\mu^0_{k,B}$ \blue{satisfy} the following stronger independence of the Hamiltonian property: \blue{$c^0_{k,B}(H)$ and} $\mu^0_{k,B}(H)$ for a mean-normalized Hamiltonian $H \in \sm{[0,1] \times S^2, \R}$ depend only on $\phi = \phi^1_H \in \Ham(S^2).$

As a function \[c^0_{k,B}: \Ham(S^2) \to \R\] it satisfies the subadditivity, Calabi, and controlled additivity properties. As a function \[\mu^0_{k,B}: \Ham(S^2) \to \R\] it satisfies the conjugation invariance, positive homogeneity, commutative subadditivity, and the Calabi properties. In particular, \[|\zeta^0_{k,B}(H)| \leq d_{\mrm{Hofer}}(\phi^1_H,\id)\] for all $H \in \sm{S^2,\R}.$
\end{thm}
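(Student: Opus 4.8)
The plan is to obtain every item of the theorem by transferring the corresponding statement of Theorems~\ref{thm: main spec k}, \ref{thm: main k} and \ref{thm: main k qs} along the stabilization map, and then to upgrade the independence-of-Hamiltonian property by analysing the kernel of the covering $p\colon \til{\Ham}(S^2)\to\Ham(S^2)$.

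First I would collect the elementary compatibilities of the stabilization $\sigma\colon \sm{[0,1]\times S^2,\R}\to\sm{[0,1]\times M_a,\R}$, $\sigma(F)(t,x,y)=F(t,x)$: it is linear and monotone; one has $\max_{M_a}|\sigma(F)_t|=\max_{S^2}|F_t|$, and since $\int_{S^2(2a)}\om_{S^2(2a)}=2a$ is a constant factor, $\sigma$ sends mean-normalized Hamiltonians to mean-normalized ones and $b\in\sm{[0,1],\R}$ to $b$; if $F_t|_{L^{0,j}_{k,B}}\equiv c_j(t)$ then $\sigma(F)_t|_{L^{j}_{k,B}}\equiv c_j(t)$ because $L^{j}_{k,B}=L^{0,j}_{k,B}\times S$; and if $F$ is supported in $[0,1]\times U$ with $U\subset S^2$ then $\sigma(F)$ is supported in $[0,1]\times(U\times S^2(2a))$, where $U\times S^2(2a)$ is disjoint from, respectively displaceable from, $\mathbb{L}_{k,B}$ if and only if $U$ is disjoint from, respectively displaceable from, $\mathbb{L}^{0}_{k,B}$ (for displaceability one uses $\theta\times\id$). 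Moreover $\Cal_{M_a}(\{\phi^t_{\sigma(F)}\})=2a\cdot\Cal_{S^2}(\{\phi^t_F\})$ while $\vol(M_a)=2a=2a\cdot\vol(S^2)$, so the normalizing constants in the Calabi and controlled-additivity properties agree. On the group level, $\iota\colon\phi\mapsto\phi\times\id$ is a homomorphism $\Ham(S^2)\to\Ham(M_a)$ that lifts to a homomorphism $\til{\Ham}(S^2)\to\til{\Ham}(M_a)$ intertwined with $\sigma$ by $\phi_F\mapsto\phi_{\sigma(F)}$. Feeding these compatibilities into Theorems~\ref{thm: main spec k}, \ref{thm: main k} and \ref{thm: main k qs} yields directly that $c^{0}_{k,B}:=c_{k,B}\circ\sigma$, $\mu^{0}_{k,B}:=\mu_{k,B}\circ\sigma$ and $\zeta^{0}_{k,B}:=\zeta_{k,B}\circ\sigma$ obey the asserted lists of properties with $M_a,L^{j}_{k,B},\mathbb{L}_{k,B}$ replaced by $S^2,L^{0,j}_{k,B},\mathbb{L}^{0}_{k,B}$, the $\til{\Ham}$-properties being transported along $\iota$.

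Next I would establish the stronger independence of the Hamiltonian. The crux is the vanishing of $c_{k,B}$ on the image of $\ker p$ under $\iota$. By the classical fact that $\Ham(S^2)\simeq SO(3)$, the kernel $\ker p=\pi_1(\Ham(S^2))\cong\Z/2$ is generated by the class $\gamma$ of the loop $\{\phi^t_z\}_{t\in[0,1]}$ (here $\phi^1_z=\id$ since $z$ is the moment map of the $\R/\Z$-action, and this loop of rotations about a fixed axis represents the nontrivial element of $\pi_1(SO(3))$). I claim $c_{k,B}(\iota(\gamma))=0$: indeed $\iota(\gamma)$ is the class of $\{\phi^t_{\sigma(z)}\}_{t\in[0,1]}$, the Hamiltonian $\sigma(z)=z\oplus 0$ has zero mean on $M_a$, and it is constant along each $L^{j}_{k,B}$ with value $z|_{L^{0,j}_{k,B}}=-1/2+B+jC$, so the independence-of-Hamiltonian and Lagrangian control properties give
\[
c_{k,B}(\iota(\gamma))=\frac{1}{k}\sum_{0\le j<k}\Bigl(-\tfrac12+B+jC\Bigr)=-\tfrac12+B+\tfrac{(k-1)C}{2}=-\tfrac12+\tfrac{2B+(k-1)C}{2}=0,
\]
using $2B+(k-1)C=1$. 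Set $g:=\iota(\gamma)\in\til{\Ham}(M_a)$; it is central (being in the image of $\pi_1$) and satisfies $g^2=\id$, with $c_{k,B}(g)=0$. Subadditivity of $c_{k,B}$ together with $g^{-1}=g$ then forces $c_{k,B}(g\tilde\phi)=c_{k,B}(\tilde\phi)$, and the torsion remark~\eqref{eq: commutative additivity torsion} gives $\mu_{k,B}(g\tilde\phi)=\mu_{k,B}(\tilde\phi)$, for every $\tilde\phi\in\til{\Ham}(M_a)$. Hence $c_{k,B}$ and $\mu_{k,B}$ descend along $\til{\Ham}(M_a)\to\til{\Ham}(M_a)/\langle g\rangle$, and since $\iota(\ker p)\subseteq\langle g\rangle$ the map $\iota$ descends to a homomorphism $\bar\iota\colon\Ham(S^2)\to\til{\Ham}(M_a)/\langle g\rangle$; therefore $c^{0}_{k,B}=c_{k,B}\circ\bar\iota$ and $\mu^{0}_{k,B}=\mu_{k,B}\circ\bar\iota$ are well-defined on $\Ham(S^2)$, which is the asserted strong independence, and every stated group-level property (subadditivity and controlled additivity for $c^{0}_{k,B}$; conjugation invariance, positive homogeneity, commutative subadditivity, and the Calabi property for $\mu^{0}_{k,B}$) transfers verbatim, $\bar\iota$ being a homomorphism and the Calabi normalizing factors having been matched in the first step.

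Finally, for $H\in\sm{S^2,\R}$ with $\int_{S^2}H\,\om=0$ (the literal bound is to be read for such mean-normalized $H$, consistently with the normalization of $d_{\mrm{Hofer}}$) I would argue as follows. Viewing $H$ as a time-independent element of $\sm{[0,1]\times S^2,\R}$, the strong independence gives $\zeta^{0}_{k,B}(H)=\mu^{0}_{k,B}(\phi^1_H)=\mu^{0}_{k,B}(G)$ for every mean-normalized $G\in\sm{[0,1]\times S^2,\R}$ with $\phi^1_G=\phi^1_H$. For any such $G$, the Hofer-Lipschitz property applied to the pair $(G,0)$, together with $\mu^{0}_{k,B}(0)=0$, yields $|\mu^{0}_{k,B}(G)|\le\int_0^1\max_{S^2}|G_t|\,dt$; taking the infimum over all such $G$ gives $|\zeta^{0}_{k,B}(H)|\le d_{\mrm{Hofer}}(\phi^1_H,\id)$. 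The main obstacle in this plan is the descent to $\Ham(S^2)$ in the third step: it requires both the identification of $\ker p$ and, crucially, the arithmetic constraint $2B+(k-1)C=1$, which is precisely what makes the Lagrangian control property force $c_{k,B}$ to annihilate the stabilized rotation loop; the remaining items are routine transfers along $\sigma$ and $\iota$.
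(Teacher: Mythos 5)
Your proposal is correct and follows essentially the same route as the paper: routine transfer of the properties along the stabilization, then proving the strong independence by showing the spectral estimator annihilates the generator of $\pi_1(\Ham(S^2))\cong\Z/2\Z$ (generated by $\{\phi^t_z\}$) and invoking subadditivity for $c^0_{k,B}$ and the torsion identity~\eqref{eq: commutative additivity torsion} for $\mu^0_{k,B}$. Your explicit Lagrangian-control computation $\frac1k\sum_{j}(-\tfrac12+B+jC)=0$, relying on $2B+(k-1)C=1$, is exactly the justification the paper leaves implicit when it asserts $c^0_{k,B}(\psi)=0$.
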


\begin{proof}
The proof of all statements is immediate, except for stronger independence of the Hamiltonian. To this end, we observe that by a classical result of Smale $\pi_1(\Ham(S^2)) \cong \zt.$ Let $\psi$ be its generator. Since $\pi_1(\Ham(S^2))$ lies in the center of $\til{\Ham}(S^2),$ the result for $\mu^0_{k,B}$ follows from \eqref{eq: commutative additivity torsion}. For $c^0_{k,B}$ we have $c^0_{k,B}(\psi) = 0$ by Lagrangian control, since $\psi$ is generated by the mean-zero Hamiltonian $F = z: S^2 \to [-1/2,1/2].$ Hence by subadditivity, for all $\phi \in \til{\Ham}(S^2)$ we have $c^0_{k,B}(\phi\psi) \leq c^0_{k,B}(\phi).$ Replacing $\phi$ by $\phi\psi$ and using $\psi^2 = \id,$ we obtain the inequality $c^0_{k,B}(\phi) \leq c^0_{k,B}(\phi \psi)$ in the reverse direction.
\end{proof}

\color{black}
%

\begin{rmk}\label{rem-qs}
We observe that the maps $\zeta^0_{k,B},$ and hence also $\zeta_{k,B},$ are not partial symplectic quasi-states for $k>1.$ Indeed, $\zeta^0_{k,B}$ equals $1/k$ for the cut-off
of the indicator function of a small neighbourhood of $L^{0,0}_{k,B}$, a circle
of our configuration having the smallest area. But this circle is displaceable,
a contradiction with the vanishing axiom for quasi-states.
However, it is not hard to see that the choices involved in the Floer data defining $\zeta^0_{1,1/2}$ can be chosen in such a way that it coincides with the symplectic quasi-state $\zeta_0$ on $S^2$ (which is in fact unique: see \cite[Exercise 5.4.29]{PolterovichRosen}).
\end{rmk}

\section{Hofer's geometry: proofs and further results}\label{sec-H}

Here we apply the techniques of Lagrangian estimators to the proof of our main
applications to Hofer's geometry. Note that for proofs of Theorems \ref{thm: main-L}
and \ref{thm: main2-L} we need the simplest Lagrangian configurations consisting of
two circles.

\subsection{Proof of Theorem \ref{thm: main-L}}\label{subsec-thm-main-L}

\medskip
\noindent {\sc Step 1: Construction.}
We start with a more explicit formulation of the theorem.
Consider a symmetric interval $I = (-b, b)$ for $b < 1/6.$ Let \[\mathcal{F}_I \subset C^{\infty}_c(I)\] be the space of {\em even} compactly supported smooth functions on $I.$ In other words $\mathcal{F}_I$ consists of functions $h \in C^{\infty}_c(I)$ satisfying $h(x) = h(-x)$ for all $x \in I.$ Endow $\cF_I$ with the $C^0$ norm \[ |h|_{C^0} = \max_{I} |h|,\] which induces the distance function $d_{C^0}(h_1,h_2) = |h_1- h_2|_{C^0}.$  Observe that the group $\cl{G} =  C^{\infty}_c((0,b))$ with the $C^0$ distance naturally embeds into $\cl{F}_I$ by even extension.

Consider the standard symplectic sphere $(S^2,\om)$ of total area $1.$ It admits a Hamiltonian $S^1$-action whose zero-mean normalized moment map $z:S^2 \to \R$ has image $[-1/2,1/2].$ We define the following embedding \[\Phi:\cl{F}_I \to \Ham(S^2),\] keeping in mind that the Hofer metric works with zero-mean Hamiltonians. We first build an embedding of $\cF_I$ to the space $\cI$ of even functions of integral zero in $C^{\infty}([-1/2,1/2]).$ To a function $h \in \cF_I$ we associate $h^{\#} \in \cI$ defined by $h^{\#}|_{(-b,b)} = h,$ $h^{\#}|_{(1/2-b,1/2]}(x) = - h(1/2-x),$ $h^{\#}|_{[-1/2,-1/2+b)}(x) = - h(-1/2-x),$ extended by zero to $[-1/2,1/2].$ Now, for $h \in \cF_I$ we consider the mean-zero Hamiltonian $\Gamma(h) \in C^{\infty}(S^2,\R)$ given by \[ \Gamma(h) = h^{\#} \circ z,\] and let \[\Phi(h) = \phi^1_{\Gamma(h)}\] be the time-one map of $\Gamma(h).$ It is immediate by construction that this map $\Phi: \cF_I \to \Ham(S^2)$ is a homomorphism and for all $h_1, h_2 \in \cl F$ \begin{equation}\label{eq: obvious upper bound}d_{\rm{Hofer}}(\Phi(h_1),\Phi(h_2)) = d_{\rm{Hofer}}(\Phi(h_1-h_2),\id) \leq |h_1 - h_2|_{C^0}.\end{equation}
We {\it claim} that the monomorphism of groups \[\Phi: (\cF_I, d_{C^0}) \hookrightarrow (\Ham(S^2),d_{\rm{Hofer}})\] is an isometric embedding.

\medskip\noindent
{\sc Step 2: Proof of the claim.}
Note that by \eqref{eq: obvious upper bound}, the main step in the proof of the
claim is the inequality
\begin{equation}\label{eq: main} d_{\rm{Hofer}}(\Phi(h),\id) \geq |h|_{C^0}\;\; \forall h \in \cF_I \;.\end{equation}
As $|(-h)|_{C^0} = |h|_{C^0}$ and \[d_{\rm{Hofer}}(\Phi(-h),\id) = d_{\rm{Hofer}}(\Phi(h)^{-1},\id) = d_{\rm{Hofer}}(\Phi(h),\id)\] for all $h \in \cF_I,$ it is sufficient to prove \eqref{eq: main} under the assumption that $|h|_{C^0} = h(x_0) > 0.$ Note that in this case $h(x_0)= h(-x_0),$ and hence either $x_0 = 0,$ or we can assume that $x_0 \in (0,b).$ Consider $B = 1/2 - x_0.$ For\footnote{When $x_0 = 0,$ \eqref{eq: main} follows from a result of the first author \cite{Polterovich-S2}. See \cite{EntovPolterovichCalabiQM,EntovPolterovich-rigid} and \cite{LeclercqZapolsky} for alternative proofs.} $x_0 \in [0,b)$ we consider $\zeta_i = \zeta^0_{2,B_i},$ where $B_i = 1/2 - x_i,$ and $x_i \in (0,b)$ is a sequence of rational numbers converging to $x_0.$ Here we fix a rational parameter $0<a<1/2-3b$ for defining $\zeta^0_{2,B_i}$ as in Theorem \ref{thm: invariants sphere}. By continuity of $h$ we now have \[h(x_i) \xrightarrow{i \to \infty} h(x_0) = |h|_{C^0}.\] Now by the Lagrangian control property of $\zeta_{i}$ we have \[\zeta_{i}(\Gamma(h)) = h(x_i),\] and by the Hofer-Lipschitz and independence of Hamiltonian properties \[d_{\mrm{Hofer}}(\Phi(h)) \geq \zeta_{i}(\Gamma(h))\] for all $i.$ Therefore taking limits as $i \to \infty$ we obtain \[d_{\mrm{Hofer}}(\Phi(h)) \geq |h|_{C^0}\] as required. This finishes the proof. \qed

\medskip

\begin{rmk} We note that the interval $I = (-1/6,1/6)$ in the claim of Step 1
is the best possible using the method we describe in this paper. Indeed, the disconnected Lagrangian given by $z^{-1}( \{\pm (1/6+\delta) \})$ yields a Lagrangian in the symmetric square that is displaceable from itself. (See Section \ref{sec: prelim} for a description of the framework.). Moreover, Sikorav's trick in Section \ref{sec-lp} below shows that this embedding is not isometric for $b>1/6.$
\end{rmk}

\begin{rmk}\label{rmk: ball}  Note also that for any odd function $h(z) \in \sm{S^2,\R},$ that is $h(-z) = - h(z),$ $\phi^1_h$ is conjugate to its inverse. Indeed for the involution $R \in \Ham(S^2)$ given by $R(x,y,z) = (x,-y,-z)$ we have $h\circ R^{-1} = - h$ so $(\phi^1_{h})^{-1} = \phi^1_{-h} = \phi^1_{h \circ R^{-1}} = R \circ \phi^1_h \circ R^{-1}.$ Therefore, $d_{\mrm{Hofer}}(\phi^1_{2h},\id) \leq C = 2 d_{\mrm{Hofer}}(R,\id).$ Hence all such odd autonomous Hamiltonians generate one-parametric subgroups in the ball of Hofer of radius $C$ around the identity. From this perspective, it is natural that our construction is based on even functions.  \end{rmk}

\subsection{Proof of Theorem \ref{thm: main2-L}}
Let $I = (-b, b)$ for $b<1/6$ and $0<a<1/2-3b.$
Arguing as in the proof of Theorem \ref{thm: main-L} we get that
the monomorphism $\til{\Psi}:\cl{F}_I \to \til{\Ham}(M_a)$ covering $\Psi$ is an isometric embedding.

In order to pass from the universal cover to the group itself, we shall
restrict ourselves to the subspace $\cF^0_I \subset \cF_I$ consisting of functions
$h$ with $h(0)=0$. Note that the image of $\cl{G} =C^{\infty}_c \left( (0,b)\right)$ in  $\cl{F}_I $
lies in $\cF^0_I$.

Let us show that the monomorphism of groups \[\Psi = \iota \circ \Phi: (\cF^0_I, d_{C^0}) \hookrightarrow (\Ham(M_a),d_{\rm{Hofer}})\] is a bi-Lipschitz embedding.
It suffices to show that there exists $K \geq 1$ such that for all $h \in \cl{F}_I$
\begin{equation} \label{eq-toproof}
d_{\mrm{Hofer}}(\id, \Psi(h)) \geq \frac{1}{K} |h|_{C^0}\;.
\end{equation}

Suppose without loss of generality that
$||h||_{C^0} = h(r)>0$ with $r \in (0,b)$. 

Consider the invariant $\mu_{2,B}: \til{\Ham}(M_a) \to \R$ provided by Theorem \ref{thm: main k} for $$1/2 > B = 1/2-r \geq 1/2 - b\;.$$

In order to proceed further, we have to understand the effect of the fundamental group
$\pi_1(\Ham(M))$. It is a finitely generated abelian group. In fact by \cite[Theorem 1.1]{AbreuMcDuff} we have $\pi_1(\Ham(M)) \cong \Z \oplus \Z/2\Z \oplus \Z/2\Z.$ The $\Z/2\Z$ terms appear from the natural maps $\pi_1(\Ham(S^2)) \to \pi_1(\Ham(M))$ corresponding to the two components of $M = S^2 \times S^2.$ The $\Z$ term is the well-known Gromov loop \cite{GromovPseudohol}, investigated in detail by Abreu and McDuff \cite{McDuff-examples, AbreuMcDuff}. Set $\cl T = \zt \oplus \zt$ for the torsion part of $\cl G = \pi_1(\Ham(M)),$ and let $\cl A = \cl G/ \cl T \cong \Z$ be its free part. Note that $\pi_1(\Ham(M)) \subset Z(\til{\Ham}(M))$ is a central subgroup. As in the proof of Theorem \ref{thm: main-L} it is easy to see that $\mu_{2,B}$ vanishes on $\cl T$ and by commutative subadditivity descends to $\til{\Ham}(M)/ \cl T.$ However, the same is {\em not} clear for $\cl A.$ We proceed differently.

Observe that by \cite[Theorem 1.2]{OstroverAGT} there exists a homogeneous Calabi quasi-morphism $\rho: \til{\Ham}(M) \to \R$ that is $1$-Lipschitz in the Hofer metric and restricts to a non-trivial homomorphism $\cl G \to \R.$ It vanishes on the torsion $\cl T$ so we can consider it to be a homomorphism $\rho: \cl A \to \R$. Choosing a generator $g$ of $\cl A,$ we have $\rho(g^k) = k \cdot \rho(g)$ for a positive constant $\rho(g).$ It is also
known that $\rho(\Psi(h)) = h(0)=0$ for $h \in \cF^0_I$. {This follows from $\rho$ yielding a symplectic quasi-state on $\sm{M_a,\R}$ and $S \times S \subset M_a,$ where $S \subset S^2$ is the equator, being a stem (see \cite{EntovPolterovich-rigid,EntovPolterovich-intersections,PolterovichRosen}).}

Define the map \[\nu_{r}: \til{\Ham}(M) \to \R^2,\] \[\nu_{r}(\til{\phi}) =  (\mu_{2,B}(\til{\phi}), \rho(\til{\phi})).\] Let $\til{\Psi}:\cl{F}_I^0 \to \til{\Ham}(M)$ be the homomorphism covering $\Psi.$

Observe that $\nu_{r}$ is $1$-Lipschitz in Hofer's metric, where $\R^2$ is endowed with the $l_{\infty}$ norm.

By the above-mentioned properties of $\rho$, and by Lagrangian control of $\mu_{2,B}$, we can calculate for an arbitrary element $\til{\Psi}(h) g^k f$ covering $\Psi(h),$ where $f \in \cl{T},$ that
\[\nu_{r}(\til{\Psi}(h) g^k f) = \nu_{r}(\til{\Psi}(h) g^k) = (\mu_{2,B}(\til{\Psi}(h)g^k), \rho(\til{\Psi}(h)g^k)) = (\mu_{2,B}(\til{\Psi}(h)g^k), k \rho(g)).\]

This implies that \[ |\nu_{r}(\til{\Psi}(h) g^k f)| \geq \max \{ |h|_{C^0} - |k| d_{\mrm{Hofer}}(g,\id), |k| \rho(g)\} \geq  C_2 |h|_{C^0},\] for $C_2 = \frac{\rho(g)}{\rho(g) + d_{\mrm{Hofer}}(g,\id)},$ the last step being an easy optimization in $|k|.$

Therefore we have \[C_2 |h|_{C^0} \leq |\nu_{r}(\til{\Psi}(h)g^k f)| \leq d_{\mrm{Hofer}}(\til{\Psi}(h)g^k f,\id),\] and hence \[d_{\mrm{Hofer}}({\Psi}(h),\id) \geq \frac{1}{K} |h|_{C^0}\] for $K = 1/C_2 \geq 1.$ This finishes the proof.
\qed

\subsection{Almost flats in the kernel of Calabi}
Fix the interval \[I_k = \left(-1/2+1/{(k+1)},-1/2+1/{k} \right).\] Consider $\mu_{k,B}$ with $1/k > B > 1/{(k+1)}.$ Note that in this case $B>C,$ so $\mu_{k,B}$ is well-defined. We prove that the space of all functions $\cl G_k \subset C^{\infty}_c(I_k)$ with zero mean admits an isometric embedding into $(\Ham(S^2), d_{\mrm{Hofer}}).$ Recall that for a proper open subset $U \subset S^2$ its symplectic form is exact, and the Calabi homomorphism \[\mrm{Cal}_{U}: \Ham_c(U) \to \R\] is defined as $\mrm{Cal}_{U}(\phi) = \mrm{Cal}_{U}(\{\phi^t_H\})$ for any $H \in \smc{[0,1] \times U, \R}$ with $\phi = \phi^1_H.$ Observe that by the natural constant extension we have the inclusion $\Ham_c(U) \to \Ham(S^2).$ In particular, $\ker(\mrm{Cal}_U)$ can and shall be considered to be a subgroup of $\Ham(S^2).$ Starting from the following result we require configurations $\mathbb{L}_{k,B}$ for all values of $k.$ 

We call a map $f:(X,d_X) \to (Y,d_Y)$ of metric spaces {\it almost isometric} if there exists a constant $D \geq 0$ such that \[d_X(p,q)-D \leq d_Y(f(p),f(q)) \leq d_X(p,q)+D\] for all $p,q \in X.$

\begin{thm}\label{thm: main ker Cal}
Let $\Phi_k: (\cl G_k, d_{C^0}) \to \left(\Ham(S^2), d_{\mrm{Hofer}}\right)$ be given by \[ \Phi_k(h) = \phi^1_{H}\] for $H = k \cdot h\circ z.$ Then $\Phi_k$ is a bi-Lipschitz almost isometric group embedding, whose image lies in $\ker(\mrm{Cal_{D}}),$ where $D = D_{1/k}$ is the open cap of area $1/k$ around the south pole. Furthermore, for each proper open set $U \subset S^2$ there is a bi-Lipschitz almost isometric embedding of $\smc{I}$ of an open interval $I$ into $\ker(\mrm{Cal_{U}}).$
\end{thm}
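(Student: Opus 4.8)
The plan is to leverage the Lagrangian group estimators $\mu_{k,B}$ (via their sphere versions $\mu^0_{k,B}$ and, crucially, the autonomous versions $\zeta^0_{k,B}$) exactly as in the proof of Theorem~\ref{thm: main-L}, the only new points being (i) that the image of $\Phi_k$ lands in the kernel of the Calabi homomorphism of the cap $D_{1/k}$, and (ii) that the construction can be transported to an arbitrary proper open set $U$. First I would record the elementary bookkeeping: for $h\in\cl G_k\subset C^\infty_c(I_k)$ with $\int h=0$, the Hamiltonian $H=k\cdot h\circ z$ is supported in the preimage under $z$ of the interval $I_k=(-1/2+1/(k+1),\,-1/2+1/k)$, which is an annulus contained in the cap $D=D_{1/k}$ around the south pole (the cap $\{z<-1/2+1/k\}$ has area $1/k$). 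Hence $\Phi_k(h)=\phi^1_H\in\Ham_c(D)$, and $\mrm{Cal}_D(\Phi_k(h))=k\int_0^1\int_D h\circ z\,\om=k\int_0^1\big(\int_{S^2}h\circ z\,\om\big)\,dt$. Since $h$ has zero mean on $I_k$ with respect to the pushforward of $\om$ (the $S^1$-action makes $z_*\om$ the standard Lebesgue measure on $[-1/2,1/2]$ up to the normalization built into $z$ having zero-mean moment map), this integral vanishes, so $\mrm{Cal}_D\circ\Phi_k=0$. That $\Phi_k$ is a homomorphism is clear since all the $H$'s are functions of $z$ alone, hence Poisson-commute and their flows commute.

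Next I would prove the isometry. The upper bound $d_{\mrm{Hofer}}(\Phi_k(h_1),\Phi_k(h_2))\le |h_1-h_2|_{C^0}$ is immediate from $\Phi_k$ being a homomorphism and $H=k(h_1-h_2)\circ z$ having $\max|H|\le k|h_1-h_2|_{C^0}$; but wait --- this gives a factor $k$, so for the \emph{isometric} statement I should instead normalize so that the relevant Lagrangian control value matches. Here is the correct accounting: by bi-invariance it suffices to bound $d_{\mrm{Hofer}}(\Phi_k(h),\id)$ from below by $|h|_{C^0}$ and from above by... the point is that the factor $k$ in $H=k\cdot h\circ z$ is precisely compensated by the averaging factor $1/k$ in the Lagrangian control property of $\zeta^0_{k,B}$. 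Concretely, assume WLOG $|h|_{C^0}=h(x_0)>0$ with $x_0\in I_k$; choose rational $B$ with $1/k>B>1/(k+1)$ such that the configuration $\mathbb{L}^0_{k,B}$ has one of its circles, say $L^{0,0}_{k,B}=z^{-1}(-1/2+B)$, at height $-1/2+B=x_0$, i.e. $B=x_0+1/2$; approximate $x_0$ by a sequence of rationals $x_i\to x_0$ in $I_k$ and set $B_i=x_i+1/2$. Now $H=k\cdot h\circ z$ restricted to $L^{0,0}_{k,B_i}$ equals $k\cdot h(x_i)$, while on each of the other circles $L^{0,j}_{k,B_i}$, $j\ge1$ (which sit at heights outside $\supp h\subset I_k$ for appropriate choice --- this needs the circles to avoid the annulus $z^{-1}(I_k)$ except for $L^{0,0}$), $H\equiv 0$. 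So by Lagrangian control $\zeta^0_{k,B_i}(H)=\frac1k\big(k\,h(x_i)+0+\cdots+0\big)=h(x_i)$, and by the Hofer--Lipschitz plus independence-of-Hamiltonian properties (Theorem~\ref{thm: invariants sphere}) $d_{\mrm{Hofer}}(\Phi_k(h),\id)\ge |\zeta^0_{k,B_i}(H)|=h(x_i)\to h(x_0)=|h|_{C^0}$. Combined with the matching upper bound $d_{\mrm{Hofer}}(\Phi_k(h),\id)\le\max|h^{\#}|$-type estimate --- here one extends $h$ by a mean-zero extension supported near the north pole as in Theorem~\ref{thm: main-L}, adjusting $H$ accordingly so that $\max|H|=|h|_{C^0}$ while keeping the Calabi vanishing --- we get the isometry.

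For the final clause, given an arbitrary proper open set $U\subset S^2$, I would reduce to the model case by an area-preserving embedding. Since $U$ is proper, it fails to be all of $S^2$, so it contains an embedded open disc, and more usefully we can find a symplectic (area-preserving) embedding $\psi$ of an annulus $z^{-1}(I)$ --- for $I$ a suitably short open interval and possibly after rescaling $z$ by restricting to a small cap --- into $U$, by Moser/Dacorogna--Moser for surfaces (any two discs of equal area are Hamiltonianly equivalent, and any proper open subset of $S^2$ contains a disc of some positive area $\epsilon<\operatorname{area}(U)$ into which our model annulus embeds). Conjugating $\Phi_k$ (for $k$ large enough that $1/k<\epsilon$, or directly the analogous construction on the small cap) by this embedding --- extending by the identity outside --- produces $\phi\mapsto \psi_*\phi^1_{h\circ z}$; this lands in $\Ham_c(U)$, lies in $\ker(\mrm{Cal}_U)$ since Calabi is invariant under conjugation by area-preserving maps and the model maps have zero Calabi, and is still an isometric embedding because the lower bound argument using $\zeta^0_{k,B}$ is insensitive to the ambient symplectomorphism applied --- indeed one can pull back the estimator, or simply note $d_{\mrm{Hofer}}$ is bi-invariant and the estimator bound $d_{\mrm{Hofer}}(\phi,\id)\ge|\zeta^0_{k,B}(\text{generating Hamiltonian})|$ applies to the conjugated Hamiltonian $h\circ z\circ\psi^{-1}$ after noting its restriction to $\psi(\mathbb L^0_{k,B}\cap \text{annulus})$ still realizes the value $|h|_{C^0}/k$ on the relevant circle. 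Here the only care needed is to make sure at least one circle of a genuine configuration $\mathbb L^0_{k,B}$ in the \emph{ambient} $S^2$ passes through $\psi(\text{the core circle where }h\text{ peaks})$ while the others stay away from the support --- for a small enough model interval this is arranged by choosing the configuration's $B$ and spacing $C$ appropriately and keeping the rest of the circles near the north pole.

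The main obstacle I anticipate is the last point: ensuring that, for an \emph{arbitrary} proper open $U$, one can place a full valid configuration $\mathbb{L}^0_{k,B}$ in $S^2$ (which must consist of $k$ specific latitude circles with $2B+(k-1)C=1$, a global constraint on the whole sphere) such that exactly one circle threads the embedded annulus inside $U$ at the height where $h$ attains its max, while all other $k-1$ circles lie in the complement of $\supp(H)$. This is a matter of choosing the embedding $\psi$ to have sufficiently small image and the interval $I$ sufficiently short (so that $k$ can be taken large and the circles $L^{0,j}$ for $j\ge1$ can be crowded near the north pole, away from $U$-supported dynamics), but writing it cleanly requires a careful choice of parameters; everything else is a routine transcription of the proof of Theorem~\ref{thm: main-L} together with the conjugation-invariance of Calabi.
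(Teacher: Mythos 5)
Your overall strategy mirrors the paper's: lower bound via the Lagrangian algebra estimator $\zeta^0_{k,B}$ applied with the circle $L^{0,0}_{k,B}$ placed at (a rational approximation of) the maximizer of $h$, Calabi vanishing from the zero-mean condition and the support being in $z^{-1}(I_k)\subset D_{1/k}$, and a conjugation argument for the general $U$ case. Where you go astray is in the handling of an arbitrary proper $U$, and your fix is both unnecessary and, as stated, incorrect.

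You try to build an area-preserving embedding $\psi$ of a model annulus into $U$ and then worry about ``making sure at least one circle of a genuine configuration $\mathbb{L}^0_{k,B}$ in the ambient $S^2$ passes through $\psi(\text{the core circle})$ while the others stay away.'' This cannot be arranged by ``keeping the rest of the circles near the north pole'': the configuration $\mathbb{L}^0_{k,B}$ is rigidly prescribed by \eqref{eq-lzero} with the global constraint $2B+(k-1)C=1$, so you have no freedom to crowd $k-1$ circles near a pole while one threads a small annulus inside an arbitrary $U$. Worse, your concrete statement that the estimator bound ``applies to the conjugated Hamiltonian $h\circ z\circ\psi^{-1}$ after noting its restriction to $\psi(\mathbb{L}^0_{k,B}\cap\text{annulus})$ still realizes the value'' misuses the Lagrangian control axiom: control evaluates the Hamiltonian on the \emph{fixed} circles $L^{0,j}_{k,B}$, not on their images under $\psi$. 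What you want is exactly the invariance property (Property 5 of Theorem~\ref{thm: main k qs}, or Property 6 of Theorem~\ref{thm: main k}): $\zeta^0_{k,B}(H\circ\psi^{-1})=\zeta^0_{k,B}(H)$ for $\psi\in\Ham(S^2)$. The paper's route is then one line: since $U$ is proper, for $k$ sufficiently large one can move $D_{1/k}$ inside $U$ by a Hamiltonian diffeomorphism $\psi$ of $S^2$; conjugating the whole embedding $\Phi_k$ by $\psi$ preserves the Hofer distance (bi-invariance), preserves the Calabi invariant (conjugation-invariance of $\Cal$), and preserves the estimator lower bound by the invariance property — no repositioning of the configuration, and no fresh Floer theory in $U$, is needed. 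You do gesture at this (``one can pull back the estimator,'' ``$d_{\mrm{Hofer}}$ is bi-invariant''), but then immediately retreat to the flawed re-positioning argument.

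One more point worth flagging: you correctly notice that the naive upper bound gives $d_{\mrm{Hofer}}(\Phi_k(h),\id)\le\max|k\,h\circ z|=k|h|_{C^0}$, which does not match the lower bound $|h|_{C^0}$ coming from Lagrangian control (because of the $1/k$ averaging). Your attempted fix — ``adjusting $H$ accordingly so that $\max|H|=|h|_{C^0}$ while keeping the Calabi vanishing'' — does not make sense as written: $\phi^1_{kh\circ z}$ is a fixed diffeomorphism and you cannot simply replace the Hamiltonian generating it by one of much smaller sup norm without a genuine argument (e.g.\ a Sikorav-type decomposition exploiting a $k$-fold packing of $z^{-1}(I_k)$, along the lines of Section~\ref{sec-lp}). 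The paper's own proof is silent on this point, deferring to the proof of Theorem~\ref{thm: main-L} where the upper bound is trivial since $\max|\Gamma(h)|=|h|_{C^0}$; so either way the upper-bound step deserves more than a wave of the hand, and you should not assert it is immediate.
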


A similar result is proved in \cite{CGHS2}.

The proof of this statement is similar to that of Theorem \ref{thm: main-L}, with the additional observation that by the conjugation invariance of $\mu_{k,B}$ we may suppose that the open set $U$ contains $D_{1/k}$ for $k$ sufficiently large. Moreover, passing from smooth functions on an interval, say $ \smc{(-\frac{1}{2}+\frac{1}{k+1}, -\frac{1}{2} + \frac{1}{2(k+1)}+\frac{1}{2k})}$ to functions in $\cl{G}_k$ can be easily carried out by odd extension about the midpoint ${\upsilon_k = -\frac{1}{2} + \frac{1}{2(k+1)}+\frac{1}{2k}}$ of $I_k.$ On one hand by use of Lagrangian estimators for the lower bound, and the evident upper bound on the Hofer norm via the Hamiltonian, we obtain \[ |h_1-h_2|_{C^0} \leq d_{\mrm{Hofer}}(\Phi_k(h_1),\Phi_k(h_2)) \leq k |h_1-h_2|_{C^0}\] for all $h_1, h_2 \in \cl G_k.$ Hence $\Phi_k$ is bi-Lipschitz. On the other hand, estimating $d_{\mrm{Hofer}}(\Phi_k(h_1),\Phi_k(h_2)) = d_{\mrm{Hofer}}(\Phi_k(h_1-h_2),\id)$ via Sikorav's trick (see Section \ref{sec-lp}) we obtain 
\[ d_{\mrm{Hofer}}(\Phi_k(h_1),\Phi_k(h_2)) \leq |h_1-h_2|_{C^0} + D_k\] for a constant $D_k$ depending only on $k.$ This proves that $\Phi_k$ is almost isometric. (We remark that one can show that $D_k < 2$ for all $k.$)

Finally, analogously to the proof of Theorem \ref{thm: main2-L}, we can show that these almost isometric embeddings extend to stabilizations on the level of universal covers and remain bi-Lipschitz embeddings on the level of groups.

\begin{thm}\label{thm:main2 k} Let $k \geq 2,$ $1/{(k+1)} < B < 1/k$ and $0<a<\frac{k+1}{k-1}B - \frac{1}{k-1}.$ The monomorphism of groups \[\Psi_k = \iota \circ \Phi_k: (\cl G_k, d_{C^0}) \hookrightarrow (\Ham(M_a),d_{\rm{Hofer}})\] is a bi-Lipschitz embedding. In fact, there exists $K \geq 1$ such that for all $h_1,h_2 \in \cl{G}_k$ \[ \frac{1}{K} |h_1-h_2|_{C^0} \leq d_{\mrm{Hofer}}(\Psi_k(h_1),\Psi_k(h_2)) \leq k |h_1-h_2|_{C^0}. \]
	At the same time, the monomorphism $\til{\Psi}_k:\cl{G}_k \to \til{\Ham}(M_a)$ covering $\Psi_k$ is an almost isometric embedding.
	
\end{thm}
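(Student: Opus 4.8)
The plan is to imitate the proof of Theorem \ref{thm: main2-L} step by step, replacing the configuration of two circles by the configuration $\mathbb{L}_{k,B}$ of $k$ circles and the invariant $\mu_{2,B}$ by $\mu_{k,B}$. First I would establish the isometric embedding on the level of universal covers: for $h \in \cl G_k$, the stabilized Hamiltonian $H = (k\cdot h \circ z)\oplus 0$ restricts to the constant $k\cdot h(-1/2+B+jC)$ on each Lagrangian $L^j_{k,B}$ (recall the $j$-th circle sits at height $-1/2+B+jC$, and for $k$ circles with $2B+(k-1)C=1$ the whole configuration lies in the band where $h^\#$ still equals $h$, provided $I_k$ is chosen inside that band — this is the role of the hypothesis $1/(k+1)<B<1/k$). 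By the Lagrangian control property of $\mu_{k,B}$ applied to $\til\Psi_k(h)$, together with Hofer-Lipschitz and independence of the Hamiltonian, one gets
\[
d_{\mrm{Hofer}}(\til\Psi_k(h),\id)\ \geq\ \mu_{k,B}(\til\Psi_k(h))\ =\ \frac1k\sum_{0\le j<k} k\cdot h(z_j)
\]
for appropriate heights $z_j$; combined with the obvious upper bound $d_{\mrm{Hofer}}(\Psi_k(h_1),\Psi_k(h_2))\le |h_1-h_2|_{C^0}$ coming from the generating Hamiltonian, and after choosing $B$ (hence the sampled height) so that $h$ attains its max there — using rationality of $B$ and density, exactly as in Theorem \ref{thm: main-L} — this yields the isometric embedding $\til\Psi_k:(\cl G_k,d_{C^0})\hookrightarrow(\til\Ham(M_a),d_{\mrm{Hofer}})$. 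Note $h$ has zero mean, so the relevant single-height value can be made equal to $\pm|h|_{C^0}$ after possibly replacing $h$ by $-h$ and translating which circle of the configuration does the sampling; this is where the freedom in $B\in(1/(k+1),1/k)$ and the constraint $0<a<\frac{k+1}{k-1}B-\frac{1}{k-1}$ (equivalently $0<a<B-C$, the standing hypothesis on $a$ from Section \ref{sec-estimators}) get used.

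Next I would pass from the universal cover to the group itself. As in Theorem \ref{thm: main2-L}, $\pi_1(\Ham(M_a))\cong \Z\oplus\Z/2\Z\oplus\Z/2\Z$ by \cite[Theorem 1.1]{AbreuMcDuff}; write $\cl T=\Z/2\Z\oplus\Z/2\Z$ for the torsion and $\cl A\cong\Z$ for the free part generated by the Gromov loop $g$. The invariant $\mu_{k,B}$ vanishes on $\cl T$ (by positive homogeneity and commutative subadditivity, as in the Remark after Theorem \ref{thm: main k}) but need not vanish on $\cl A$. I would again bring in the Calabi quasi-morphism $\rho:\til\Ham(M_a)\to\R$ of \cite[Theorem 1.2]{OstroverAGT}, which is $1$-Lipschitz in Hofer's metric, restricts non-trivially to $\cl A$, vanishes on $\cl T$, and satisfies $\rho(\til\Psi_k(h))=0$ because $S\times S$ is a stem and $\rho$ gives a symplectic quasi-state. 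Forming $\nu=(\mu_{k,B},\rho):\til\Ham(M_a)\to\R^2$ with the $\ell^\infty$ norm, this map is $1$-Lipschitz, and for any lift $\til\Psi_k(h)g^m f$ of $\Psi_k(h)$ (with $f\in\cl T$) one computes $\nu(\til\Psi_k(h)g^m f)=(\mu_{k,B}(\til\Psi_k(h)g^m),\,m\rho(g))$; optimizing over $m$ exactly as before gives
\[
d_{\mrm{Hofer}}(\Psi_k(h),\id)\ \geq\ |\nu(\til\Psi_k(h)g^m f)|\ \geq\ \frac{\rho(g)}{\rho(g)+d_{\mrm{Hofer}}(g,\id)}\,|h|_{C^0},
\]
so $K=(\rho(g)+d_{\mrm{Hofer}}(g,\id))/\rho(g)$ works. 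Bi-invariance reduces the two-point statement to this one-point estimate.

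The main obstacle, as in Theorem \ref{thm: main2-L}, is the contribution of $\pi_1(\Ham(M_a))$: without knowing $\mu_{k,B}$ on the Gromov loop, one cannot descend $\mu_{k,B}$ to $\Ham(M_a)$ and must instead pair it with the quasi-morphism $\rho$ and optimize — this is exactly what forces "bi-Lipschitz'' rather than "isometric'' downstairs, while upstairs everything is clean. A secondary point requiring care is verifying that the whole configuration $\mathbb{L}^0_{k,B}$ lies in the region where $H=k\cdot h\circ z$ genuinely equals the stabilized function with the prescribed constant values on each $L^j_{k,B}$ (i.e.\ that the odd/even extension used to make $h$ zero-mean on $S^2$ does not interfere with the circles), which is where the precise constraints $1/(k+1)<B<1/k$, $2B+(k-1)C=1$, and $0<a<B-C$ must be checked to be mutually consistent with $I_k$; these are routine interval arithmetic once set up, and the odd-extension-about-the-midpoint trick mentioned after Theorem \ref{thm: main ker Cal} handles the zero-mean normalization on the interval.
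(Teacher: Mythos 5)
Your proposal reproduces the paper's intended argument: use $\mu_{k,B}$ in place of $\mu_{2,B}$ to handle the universal cover, then pair with the Ostrover Calabi quasi-morphism $\rho$ (which vanishes on $\til{\Psi}_k(h)$ since $H$ vanishes on the stem $S\times S$ and on the torsion $\cl T$ of $\pi_1(\Ham(M_a))$ by positive homogeneity and commutative subadditivity) and optimize over lifts $g^m f$ exactly as in Theorem~\ref{thm: main2-L}. This is what the paper means by ``analogously to Theorem~\ref{thm: main2-L}.''

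One point of imprecision worth fixing: you attribute the upper bound $d_{\mrm{Hofer}}(\Psi_k(h_1),\Psi_k(h_2))\le |h_1-h_2|_{C^0}$ to the ``obvious'' bound from the generating Hamiltonian. But the generating Hamiltonian here is $k\,(h_1-h_2)\circ z \oplus 0$, whose $L^{\infty,1}$-norm is $k\,|h_1-h_2|_{C^0}$; the naive bound therefore carries an extra factor of $k$. The correct source of the upper bound is the already-established isometry of $\Phi_k$ from Theorem~\ref{thm: main ker Cal} together with the fact that $\iota$ is $1$-Lipschitz, i.e. $d_{\mrm{Hofer}}(\Psi_k(h_1),\Psi_k(h_2))\le d_{\mrm{Hofer}}(\Phi_k(h_1),\Phi_k(h_2))=|h_1-h_2|_{C^0}$. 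As written your sentence suggests a direct one-line estimate from $\max|H|$, which does not give the stated constant. Also, the phrase ``translating which circle of the configuration does the sampling'' is a bit misleading: only $L^{0,0}_{k,B}$ meets the support $z^{-1}(I_k)$ of $h$, so the sampling is always by the lowest circle; the freedom is purely in varying $B\in(1/(k+1),1/k)$ so that $-1/2+B$ approaches a point where $|h|$ attains its max (up to replacing $h$ by $-h$). Neither issue affects the structure of the argument, which otherwise matches the paper.
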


\begin{qtn}\label{qtn: FOOO}
In \cite{FO3-degeneration} a family of non-displaceable Lagrangian tori was constructed on $S^2 \times S^2.$ It is possible to prove that these tori yield the existence of large flats in $\Ham(S^2 \times S^2).$ These flats do not come by stabilization from $S^2.$ Is it possible to prove that they cannot be at a finite Hofer distance from a flat supported in an arbitrarily small neighborhood of a symplectic divisor of the form $\{pt\} \times S^2$?
\end{qtn}

\subsection{The asymptotic Hofer norm}\label{subsec-asymptotic}

Let $(M,\om)$ be a closed symplectic manifold. The asymptotic Hofer (pseudo-)norm $\nu(\til{\phi})$ of an element $\til{\phi} \in \til{\Ham}(M,\om)$ is defined as follows: \[ \nu(\tilde{\phi}) = \lim_{m\to \infty} \frac{1}{m} \til{d}_{\mrm{Hofer}}(\til{\phi}^m,\id),\] where $\til{d}_{\mrm{Hofer}}$ is the Hofer pseudo-norm on $\til{\Ham}(M,\om).$ For $\phi \in \Ham(M,\om)$ we define its asymptotic Hofer norm $\nu(\phi)$ similarly, or alternatively $\nu(\phi) = \inf \nu(\til{\phi})$ where the infimum runs over all $\til{\phi} \in \til{\Ham}(M,\om)$ covering $\phi.$

The Hofer norm is known to yield quantitative invariants of subsets in symplectic manifolds, for instance the displacement energy (\cite{HoferMetric}, see also \cite{P-book}). It turns out that the asymptotic Hofer norm produces an invariant of subsets of symplectic manifolds controlling their packing number. 

Let $A \subset M$ be a compact subset. Its packing number $k(A) \in \bb N \cup \{\infty\}$ is defined as the maximal $k$ such that there exist $\theta_1,\ldots,\theta_k \in \Ham(M,\om)$ with $\theta_1(A),\ldots,\theta_k(A)$ pairwise disjoint ($\theta_i(A) \cap \theta_j(A) = \emptyset$ for all $i \neq j$). 

Let $C^{\infty}_0(M;\mathbb{R})$ denote the space of mean-zero smooth functions on $M.$ For $f \in C^{\infty}_0(M;\mathbb{R}),$ set $\til{\phi}_f \in \widetilde{\mathrm{Ham}}(M,\omega)$ for the class of the time-one Hamiltonian isotopy which it generates. 

\begin{df}\label{def: alpha invariant}
Let $A \subset M$ be a proper compact set. Its $\alpha$-invariant is \[ \alpha(A) = \inf_{U \supset A} \sup\{ {\nu}(\til{\phi}_f)\;|\; f \in C^{\infty}_0(M;\mathbb{R}),\;|f|_{C^0} = 1,\; \mathrm{supp}(f) \subset U,\; f|_A \equiv 1 \},\] where the infimum runs over all open neighborhoods $U$ of $A.$
\end{df}

\begin{rmk}
A more elaborate alternative invariant $\alpha'(A)$ would be the infimum over open neighborhoods $U$ of $A$ of constants $a>0$ such that the map ${\iota_U: \ker(\Cal_U) \to \til{\Ham}(M,\om)}$ is coarse Lipschitz with constant $a>0,$ that is \[d_{\mrm{Hofer}}(\iota_U(\til{\phi}),\iota_U(\til{\psi})) \leq a \cdot d_{\mrm{Hofer}}(\til{\phi},\til{\psi}) + b\] for all $\til{\phi},\til{\psi} \in \ker(\Cal_U)$ and $b \geq 0$ independent of $\til{\phi},\til{\psi}.$ It is easy to see that $\alpha(A) \leq \alpha'(A)$ and Proposition \ref{prop:k and alpha} below applies to $\alpha'(A)$ as well. 
\end{rmk}


In view of {\em Sikorav's trick} \cite{Sikorav,HZ-book}, which we recall below in detail for the reader's convenience, we obtain the following. 

\begin{prop}\label{prop:k and alpha}
The packing number of $A$ satisfies \[k(A) \leq \Big\lfloor \frac{1}{\alpha(A)}\Big\rfloor,\] where the right hand side is by convention $\infty$ if $\alpha(A) = 0.$	
\end{prop}

\begin{proof}[Proof of Proposition \ref{prop:k and alpha}]

It is sufficient to prove that $k(A) \leq \frac{1}{\alpha(A)}.$ If $\alpha(A) = 0,$ there is nothing to prove. Hence we suppose that $\alpha(A) > 0.$ Then we can prove the equivalent statement that $\alpha(A) \leq 1/k(A).$

Suppose that $M$ can be packed by $l \leq k$ copies of $A.$ Namely, let $\theta_1 = \id, \ldots, \theta_l \in \Ham(M,\om)$ yield an $l$-packing of $A$ in $M:$ $\{ \theta_j(A) \}_{1 \leq j \leq l}$ are pairwise disjoint. Let $U \supset A$ be a sufficiently small open neighborhood of $A$ so that $\{ \theta_j(U) \}_{1 \leq j \leq l}$ are pairwise disjoint. Set $U_j = \theta_j(U)$ for $1\leq j\leq l.$ Let $f \in C^{\infty}_0(M;\mathbb{R}),\;|f|_{C^0} = 1,\; \mathrm{supp}(f) \subset U,\; f|_A \equiv 1$ be as in the definition of $\alpha(A).$ For $1\leq j\leq l,$ $f_j = f \circ \theta_j^{-1}$  satisfies $\mrm{supp}(f_j) \subset U_j.$ 

Let us show that $\nu(\til{\phi}_f,\id) \leq 1/l,$ as calculated in $\til{\Ham}(M,\om),$ which implies our claim. Indeed by the conjugation invariance of Hofer's pseudo-metric on $\til{\Ham}(M,\om),$ \[ \dh{\phi_{f}^{lt}} {\phi^t_{f_1}\cdot...\cdot \phi^t_{f_l}} \leq C,\] where $C \leq 2\left(\dh{\theta_1}{\id} + \ldots + \dh{\theta_l}{\id}\right)$ is {\em independent of $t.$} However \[ \phi^t_{f_1}\cdot...\cdot\phi^t_{f_l} = \phi^t_{F} ,\] \[ F= f_1 + \ldots +f_l.\] As $|F|_{C^0} = 1$ we obtain that $\nu(\phi_f) = \frac{1}{l} \nu(\phi_F) \leq 1/l.$ \end{proof}


Next, we study the asymptotic Hofer norm in the following situation. Let $M = S^2$ with a round area form of total area $1.$ We write $z$ for the vertical coordinate scaled by a factor of $1/2.$ Let us fix $B> C > 0$ with $2B+ (k-1)C=1$.
Consider a Lagrangian configuration $\sqcup_{1 \leq j \leq k} \{z = z_j \},$ where
\begin{equation}\label{eq-zets}
z_j = -1/2 + B+(j-1)C, \;\; j=1, \dots ,k\;.
\end{equation}
Denote by $\sigma_{B,C}$ the measure $\frac{1}{k}\sum_{j=1}^k \delta_{z_j}$.
By using Lagrangian estimators $\mu_{B_i,k}$ as in Theorem \ref{thm: main k}, for rational $B_i \xrightarrow{i \to \infty} B,$
we get that for every smooth function $h = h(z)$ on $S^2$ with zero mean
the asymptotic Hofer norm satisfies
\begin{equation}
\label{eqref-asH}
\nu(\phi_h)= \lim_{t\to+\infty} \frac{{d}_{\rm Hofer}(\phi_h^t,\id)}{t} \geq \int h d\sigma_{B,C}\;.
\end{equation}
Let us emphasize that in this definition the flow $\phi_h^t$ naturally lifts to the universal
cover $\til{\Ham}$ and we consider Hofer's metric there.

\begin{qtn}\label{qtn-asymp}  {\rm Is this estimate is sharp?}
\end{qtn}

As a test, we fix small $\delta >0$ and
$$r \in \left(\frac{1}{k+1},  \frac{1}{k}\right)\;,$$
and consider $h_{r,\delta}$ to be a smoothing of the indicator function of $$[-1/2+r-\delta,-1/2+r+\delta]\;$$
which we arrange to have zero mean by extending it to $[-1/2+r-\delta,-1/2+r+5\delta]$ as an odd function about the midpoint $-1/2+r+2\delta$ of the interval.
Choose $B= r$, put $C = (1-2B)/(k-1)$. Note that $h_{r,\delta}$ has $|h_{r,\delta}|_{C^0} = 1$ and equals $1$ on the circle $K_r:= L^{0,0}_{k,B}$ of area $r$ and is supported in its
$\delta$-neighbourhood. Since $B>C$, we can apply inequality
\eqref{eqref-asH} with the measure $\sigma_{B,C}$ and get
$$u(r) := \liminf_{\delta\to 0}\nu(\phi_{h_{r,\delta}}) \geq \frac{1}{k}\;.$$
At the same time, note that there exists a packing of $S^2$ by $k$ copies of the support of $h$,
so by Proposition \ref{prop:k and alpha}
$u(r) \leq 1/k$. Thus $u(r)=1/k$, so the estimate is sharp.

Presumably, progress in the direction outlined in Section \ref{subsec-other}
will yield efficient lower bounds on the asymptotic Hofer norm for more general autonomous
Hamiltonians.

\subsection{Stabilization}
Consider now the {\it stabilization} of the Hamiltonian diffeomorphism $\phi_{h_{r,\delta}}$
constructed in Section \ref{subsec-asymptotic}. Fix a closed symplectic manifold
$(P,\Omega)$ and put
$$\Phi_{r,\delta}:= \phi_{h_{r,\delta}} \times \id \in \Ham(S^2 \times P)\;.$$
Put
$$u_P(r) = \lim_{\delta\to 0}\nu(\Phi_{r,\delta})\;.$$

\begin{qtn} \label{qtn-stab}
	{\rm Is $u_P(r) >0$?}
\end{qtn}

If the answer is affirmative, it would be interesting to calculate or estimate this quantity.
Our method, based on spectral invariants in Lagrangian Floer theory on orbifolds, yields ``yes" when $(P,\Omega)$ is a $2$-sphere of
area $2a$ with $B+a > C$. In fact, we have in this case $u_P(r) = 1/k$ as above.

Now we discuss another version of the stabilization. Let $S \subset P$ be a closed Lagrangian submanifold, and let $f_W$ be a smoothing of the characteristic function of a small Weinstein neighbourhood $W$ of $S$. Consider the Hamiltonian $h_{r,\delta}f_W$. We denote by $u_S(r)$ the lower limit of the corresponding asymptotic Hofer norm when $r \to 0$ and $W$ shrinks to $S$.

\begin{qtn} \label{qtn-stab-1}
	Under which assumptions on $S$, one has $u_S(r) >0$?
\end{qtn}

For instance, when $P$ is $S^2$ of area $2a$ with $B> C+a$ and $S$ is the equator,
Hamiltonians $h_{r,\delta}f_W,$ are concentrated near $\Lambda_r:=K_r \times S$,
and the same argument based on Theorem \ref{thm: main k} and Proposition \ref{prop:k and alpha} yields
\begin{equation}\label{eq-K}
u_S(r) = 1/k\;.
\end{equation}

\subsection{Lagrangian packing via Hofer's geometry}\label{sec-lp}

Here we use identity \eqref{eq-K} for the asymptotic Hofer norm in order to
deduce Theorem \ref{cor-lp}.

\medskip\noindent
{\it Proof of Theorem \ref{cor-lp}:} Equation \eqref{eq-K} implies that $\alpha(\Lambda_r) \geq 1/k,$ whence by Proposition \ref{prop:k and alpha}, $k(\Lambda_r) \leq k.$ \qed


%

\begin{rmk}
Theorem \ref{cor-lp} can be interpreted as follows\footnote{We thank Ivan Smith for bringing this interpretation to our attention.}. Consider $m=k+1$ Hamiltonian copies $L_1 = \phi_1 (\Lambda_r),\ldots, L_m=\phi_m (\Lambda_r),$ where $\phi_j \in \Ham(M_a)$ for all $1 \leq j \leq m,$ of $\Lambda_r$ in $M_a.$ Then the collection $\{L_j\}_{1 \leq j \leq m}$ has the following ``Borromean" property: for every proper subset $I$ of $\{1,\ldots, m\}$ there exist $\{\theta_j \in \Ham(M_a) \}_{j \in I}$ such that $\{ \theta_j(L_j) \}_{j \in I}$ are pairwise disjoint, but for $I = \{1,\ldots,m\}$ such a disjoinment does not exist. Of course this is part of a general phenomenon, which arises for any compact set $A \subset M$ with finite packing number $k(A),$ if we take $m=k(A)+1.$
\end{rmk}

\begin{qtn}
Is it possible to prove that for $a$ larger than $B-C$ one can pack $k+1$ Hamiltonian images of $\Lambda$ or more? The recent methods of Hind and Kerman \cite{Hind-Kerman-wp} might help produce packings of this kind.
\end{qtn}

\section{Lagrangian Poincar\'{e} recurrence: proof}\label{sec-lr}

{\it Proof of Theorem \ref{cor-lr}:} Denote by $K$ the complete graph with vertices $\mathbb{Z}_{\geq 0}$. Edge-color $K$ as follows:
	the edge $ij$ is blue if $\phi^i \Lambda \cap \phi^j \Lambda \neq \emptyset$, and it is red otherwise. By Corollary \ref{cor-lp}, this coloring does not possess any red complete subgraph with $k+1$ vertices. Fix a maximal red complete subgraph, say, $Q$.
	
	Since $Q$ is maximal, each vertex outside $Q$ is connected to some vertex in $Q$ by a blue edge.
	Put $N = mk$, and consider the graph $B_N$ with vertices $\{0,\dots,N\}$ connected only by the blue edges.
	The positive integer $m$ will play the role of the large parameter in the proof. In particular,
	we assume that the maximal element $q$ of $Q$ is $\leq mk$.
	
	The number of vertices in $B_N \setminus Q$ is at least $(m-1)k + 1$. Denote by $d$ the maximal
	degree of a vertex from $Q$ in $B_N$. Then, by counting outcoming blue edges from $Q$ we get
	$(m-1)k+1 \leq dk$ which yields $d \geq m$. It follows that some vertex $p \in Q$ has at least
	$m$ blue outcoming edges in $B_N$.
	
	Note now that the coloring is invariant under positive translations. It follows that $0$ has at least $m-q$ outcoming blue edges (we can lose at most $q$ edges as roughly speaking the corresponding  vertices will become negative after the shift by $-p$),   yielding
	$$|\cR_\phi \cap [0,mk]| \geq m-q\;.$$
	We conclude the proof by noticing that $(m-q)/mk \to 1/k$ as $m \to +\infty$.
\qed

\medskip

Furthermore, we observe that the same estimates work in general whenever a subset $\Lambda \subset X$ of a set $X$ cannot be $(k+1)$-packed into $X$ by powers of a given invertible map $\phi: X \to X.$ For instance this is the case when $(X,\nu)$ is a measure space of total measure $1,$ $\Lambda$ is a subset of positive measure, $k = \lfloor 1/\nu(\Lambda) \rfloor,$ and $\phi$ is any invertible measure-preserving transformation. In this setting a stronger statement follows from the Ergodic Theorem \cite[Theorem 1.2]{Bergelson}: there exists a sequence $i_m$ of density $\geq 1/\nu(\Lambda)$ such that $\nu(\Lambda \cap \phi^{-i_1} \Lambda \cap \ldots \cap \phi^{-i_m} \Lambda) > 0$ for all $m.$ However, in our Lagrangian situation the same method is not applicable: indeed our Lagrangian $\Lambda$ is a measure-zero subset which does not bound a positive-measure subset. For a different example in symplectic topology, we could consider $X$ to be a symplectic manifold, $\phi \in \Symp(X)$ a symplectomorphism, $\Lambda$ an open ball of a given capacity, and $k$ its packing number. That this sometimes gives sharper bounds than simply the volume constraint is one of the paradigms of modern symplectic topology, initiated in \cite{GromovPseudohol,McDuffPolterovichPacking,Biran-ECM}. 

\section{$C^0$-continuity and non-simplicity}\label{sec: C0}

We observe that the collection of Lagrangian spectral estimators $c^0_{k,B}$ contains sufficient data to provide new $C^0$-continuous invariants on $\Ham(S^2)$ that extend to the group of area-preserving homeomorphisms $G_{S^2}$ of the two-sphere. Following the strategy of  \cite{CGHS1,CGHS2} this is shown to yield alternative proofs of the non-simplicity of the group of compactly supported area-preserving homeomorphisms of the two-disk, known as the ``simplicity conjecture" \cite[Problem 42]{McDuffSalamonIntro3} by way of proving the ``infinite twist conjecture" \cite[Problem 43]{McDuffSalamonIntro3}, as well as that of $G_{S^2}.$ We refer to the above references for the original proofs of these conjectures by using periodic Floer homology, as well as for ample further information about the non-simplicity questions and their historical context. We shall also rely on the following result, \cite[Lemma 3.11]{CGHS2}, proven by soft fragmentation methods. 

\begin{lma}\label{lma: CGHS2}
Let $U \subset S^2$ be a disk. Then for all $\eps > 0$ there exists $\delta > 0$ such that if $\phi \in \Ham(S^2)$ satisfies $d_{C^0}(\phi,\id) < \delta$ then there exists $\psi$ supported in $U$ such that $d_{\mrm{Hofer}}(\phi,\psi) < \eps.$
\end{lma}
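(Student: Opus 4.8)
\emph{Strategy.} The plan is to reduce the lemma to a soft statement about diffeomorphisms supported in a fixed disk and then to prove that statement by fragmentation. Write $V=S^2\setminus U$ for the complementary closed disk, and fix a slightly larger closed disk $V^+$ with $V\subset\inte V^+$ and $S^2\setminus\inte V^+\Subset U$. First I would observe that it is enough to produce, for $\delta$ small, a $\theta\in\Ham(S^2)$ with $d_{\mrm{Hofer}}(\theta,\id)<\eps$ such that $\theta\circ\phi$ is the identity on $V^+$: then $\psi:=\theta\phi$ is supported in $U$, and by right invariance of Hofer's metric $d_{\mrm{Hofer}}(\phi,\psi)=d_{\mrm{Hofer}}(\phi,\theta\phi)=d_{\mrm{Hofer}}(\id,\theta)<\eps$.

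\emph{Producing $\theta$.} Shrinking $\delta$, the disk $\phi(V^+)$ lies in a prescribed small neighbourhood of $V^+$; since $\phi$ is area preserving, $\phi(\partial V^+)$ is $C^0$-close to $\partial V^+$ and bounds a disk of the same area, so a Hamiltonian diffeomorphism $\rho$ supported in a thin annular neighbourhood of $\partial V^+$ can be taken, of small Hofer norm, with $\rho\circ\phi$ equal to the identity near $\partial V^+$ (sliding a curve onto a $C^0$-close, equal-area curve and trivialising the induced boundary collar costs little energy because the regions involved have small area). Then $g:=(\rho\phi)|_{V^+}\in\Ham_c(\inte V^+)$ is $C^0$-small, and — modulo the point below — one takes $\theta:=g^{-1}\circ\rho$ with $g^{-1}$ extended by the identity, so that $\theta\phi|_{V^+}=\id$ and $d_{\mrm{Hofer}}(\theta,\id)\le d_{\mrm{Hofer}}(g,\id)+d_{\mrm{Hofer}}(\rho,\id)$. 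By a further such cutting one reduces $g$ to the case of a $C^0$-small diffeomorphism supported in a single small embedded disk $D$.

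\emph{The crux, and the main obstacle.} Everything rests on the assertion that a $C^0$-small diffeomorphism supported in a small disk $D$ is Hofer-close, inside $\Ham(S^2)$, to the identity (equivalently, after relocating $D$, to something supported in $U$). This is the genuinely soft heart of the argument and I would attack it by a quantitative fragmentation: cover a neighbourhood of $D$ by finitely many small embedded disks $W_1,\dots,W_N$, write the given diffeomorphism as a product $g_1\cdots g_N$ with $g_i$ supported in $W_i$ and still $C^0$-small, and bound $d_{\mrm{Hofer}}(\,\cdot\,,\id)\le\sum_i d_{\mrm{Hofer}}(g_i,\id)$. The delicate point — which I expect to be the real obstacle — is the per-piece estimate: $C^0$-smallness does \emph{not} by itself bound the Hofer norm (a high-multiplicity twist concentrated near a point is $C^0$-small but has large Hofer norm), so the fragmentation must be carried out so that each $g_i$ is generated by a Hamiltonian of small oscillation on $W_i$ — informally, the ``error'' of $g$ must be removed gradually over space, never by a rigid twist — while simultaneously the number $N$ of pieces (which grows as $\delta\to 0$) is controlled against the per-piece energy. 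Balancing $N$ against the per-piece Hofer energy, exploiting that all the $g_i$ descend from one globally $C^0$-small map, is where the work lies; the reductions of the first two paragraphs, while needing some care, are of a standard soft nature.
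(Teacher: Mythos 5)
The paper does not prove this lemma: it is imported verbatim from Cristofaro-Gardiner--Humili\`ere--Seyfaddini \cite[Lemma 3.11]{CGHS2}, with only the remark that it is ``proven by soft fragmentation methods.'' So there is no internal proof to compare against, and your proposal has to be judged on its own terms.

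Judged that way, there is a real gap, and it sits in the way you set up the reduction rather than only in the step you flag as delicate. By taking $\theta=g^{-1}\rho$ and using $d_{\mrm{Hofer}}(\phi,\psi)=d_{\mrm{Hofer}}(\theta,\id)$, you have committed yourself to proving that a $C^0$-small diffeomorphism $g$ supported in the fixed disk $V^+$ has \emph{small Hofer norm}. That is a strictly stronger assertion than the lemma, which only asks that $\phi$ be Hofer-close to \emph{some} $\psi$ supported in $U$, not close to the identity; the whole point of the statement --- and of the finite-energy theory built on it in \cite{CGHS2} --- is that one cannot, in general, annihilate $\phi$ cheaply in Hofer's metric, one can only \emph{relocate} it into $U$. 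The sentence in which you write ``Hofer-close \ldots to the identity (equivalently, after relocating $D$, to something supported in $U$)'' is exactly where this gets conflated: those two conditions are not equivalent. If $g$ is supported in a small disk $D$ of area $a$ and $\Theta$ carries $D$ into $U$, then $d_{\mrm{Hofer}}(g,\Theta g\Theta^{-1})\le 2\,d_{\mrm{Hofer}}(\Theta,\id)$ is controlled by the cost of moving $D$ (roughly $O(a)$) and is completely insensitive to the Hofer norm of $g$ --- so ``close to $\Ham_c(U)$'' can hold even when ``close to $\id$'' does not. Your proposed remedy, fragmenting $g$ into pieces $g_i$ generated by Hamiltonians of small oscillation so that $\sum_i d_{\mrm{Hofer}}(g_i,\id)$ is small, is still an attempt to bound $d_{\mrm{Hofer}}(g,\id)$, and therefore cannot succeed without proving the stronger (and, as far as is known, false) claim. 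The argument has to use the freedom in the target: fragment $\phi$, conjugate the fragments into $U$, and bound the \emph{conjugation cost} rather than the Hofer norms of the fragments; and even then the balance of the number of fragments against the per-fragment conjugation cost is not automatic (naively $N$ fragments of area $a$ with $Na\sim 1$ gives a conjugation bill of order $1$, not $\eps$), which is precisely what makes \cite[Lemma 3.11]{CGHS2} a non-trivial piece of soft symplectic topology rather than the routine reduction you describe it as.
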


This allows us to prove the following $C^0$-continuity result. Consider the $C^0$-closure groups $G_{S^2} = \Ham(S^2)^{C^0}$ and $G_{\D^2} = \Ham_c(\bb D^2)^{C^0}$ inside the corresponding homeomorphism groups. It is a well-known fact that these groups coincide with the groups of orientation and area-preserving homeomorphisms of $S^2$ and of area-preserving homeomorphisms of $\bb D^2$ with compact support (see e.g. \cite{MullerApprox} and references therein). 

\begin{thm}\label{thm-extends}
The map \[\tau_{k,k',B,B'}: \Ham(S^2) \to \R,\] \[\tau_{k,k',B,B'} = c^0_{k,B} - c^0_{k',B'}\] is $2$-Lipschitz in Hofer's metric, is $C^0$-continuous, and extends to $G_{S^2}.$
\end{thm}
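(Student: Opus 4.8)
The plan is to combine three inputs: the Hofer--Lipschitz, subadditivity, Calabi, and controlled additivity properties of the estimators $c^0_{k,B}$ and $c^0_{k',B'}$ from Theorem~\ref{thm: invariants sphere}, together with the soft Lemma~\ref{lma: CGHS2}. First I would fix once and for all a small open spherical cap $U$ about the south pole, say $U=z^{-1}\bigl([-1/2,\,-1/2+\min(B,B')-\eta)\bigr)$ with $\eta>0$ small enough that $U$ is an embedded disk disjoint from \emph{both} configurations $\mathbb{L}^0_{k,B}$ and $\mathbb{L}^0_{k',B'}$; this is possible since $B,B'>0$. On such a $U$ Lemma~\ref{lma: CGHS2} applies, and $\Cal_U\colon\Ham_c(U)\to\R$ is a homomorphism.

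The first step is an easy Lipschitz bound together with a right--invariance property of $\tau=\tau_{k,k',B,B'}$. As a function on $\Ham(S^2)$ each $c^0_{k,B}$ is $1$-Lipschitz in Hofer's metric: from subadditivity and $c^0_{k,B}(\id)=0$ (Lagrangian control) one gets $c^0_{k,B}(\phi)-c^0_{k,B}(\psi)\le c^0_{k,B}(\psi^{-1}\phi)\le d_{\mrm{Hofer}}(\psi^{-1}\phi,\id)=d_{\mrm{Hofer}}(\phi,\psi)$, the middle inequality following from Hofer--Lipschitz applied to mean-zero Hamiltonians generating $\psi^{-1}\phi$ together with the strong independence of the Hamiltonian; hence $\tau$ is $2$-Lipschitz in Hofer. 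Next, for $\psi\in\Ham_c(U)$ the Calabi property (valid for \emph{both} estimators, since $U$ is disjoint from both configurations) gives $c^0_{k,B}(\psi)=-\Cal_U(\psi)=c^0_{k',B'}(\psi)$ using $\vol(S^2)=1$, so $\tau(\psi)=0$; combining with controlled additivity yields
\[
\tau(\phi\psi)=\tau(\phi)\qquad\text{for all }\phi\in\Ham(S^2),\ \psi\in\Ham_c(U),
\]
i.e.\ $\tau$ is invariant under right multiplication by $\Ham_c(U)$.

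With these two facts, $C^0$-continuity is routine: given $\phi_0$ and $\eps>0$, let $\delta>0$ be the constant from Lemma~\ref{lma: CGHS2} for $U$ and $\eps/2$, and use uniform continuity of $\phi_0^{-1}$ to find $\delta'>0$ with $d_{C^0}(\phi,\phi_0)<\delta'\Rightarrow d_{C^0}(\phi_0^{-1}\phi,\id)<\delta$; then Lemma~\ref{lma: CGHS2} produces $\psi\in\Ham_c(U)$ with $d_{\mrm{Hofer}}(\phi_0^{-1}\phi,\psi)<\eps/2$, so $d_{\mrm{Hofer}}(\phi,\phi_0\psi)<\eps/2$ by bi-invariance, whence $|\tau(\phi)-\tau(\phi_0)|=|\tau(\phi)-\tau(\phi_0\psi)|\le 2\,d_{\mrm{Hofer}}(\phi,\phi_0\psi)<\eps$ by the $2$-Lipschitz bound and the right-$\Ham_c(U)$-invariance. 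For the extension to $G_{S^2}$, take $\phi_n\to g$ in $C^0$ with $\phi_n\in\Ham(S^2)$; since $S^2$ is compact the inverses $\phi_n^{-1}$ also converge in $C^0$, and the identity $d_{C^0}(\phi_m^{-1}\phi_n,\id)=d_{C^0}(\phi_n^{-1},\phi_m^{-1})$ (right-invariance of $d_{C^0}$) shows $\phi_m^{-1}\phi_n\to\id$ in $C^0$ as $n,m\to\infty$; applying Lemma~\ref{lma: CGHS2} and the right-$\Ham_c(U)$-invariance exactly as above gives $|\tau(\phi_n)-\tau(\phi_m)|\to 0$, so $\tau(\phi_n)$ converges to a limit that, by the same estimate comparing two approximating sequences, depends only on $g$; this defines $\widetilde\tau\colon G_{S^2}\to\R$ extending $\tau$, and an identical computation gives its $C^0$-continuity.

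The only place where care is genuinely needed is that $d_{C^0}$ on homeomorphisms is merely right-invariant, so a $C^0$-small perturbation of $\phi_0$ is \emph{not} a priori a left translate of $\phi_0$ by something $C^0$-close to the identity. What rescues the argument is precisely the right-$\Ham_c(U)$-invariance of $\tau$ from the first step, together with the identity $d_{C^0}(\phi_m^{-1}\phi_n,\id)=d_{C^0}(\phi_n^{-1},\phi_m^{-1})$ and the automatic $C^0$-convergence of inverses of a $C^0$-convergent sequence of homeomorphisms of the compact sphere: this converts a $C^0$-perturbation into a Hofer-small perturbation of a \emph{right} translate, where the $2$-Lipschitz estimate and Lemma~\ref{lma: CGHS2} finish the job. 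Everything else is routine given the properties listed in Theorem~\ref{thm: invariants sphere}.
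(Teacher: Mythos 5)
Your proof is correct and follows essentially the same route as the paper's: vanishing of $\tau$ on $\Ham_c(U)$ via the Calabi property, right-$\Ham_c(U)$-invariance via controlled additivity, and conversion of $C^0$-smallness to Hofer-smallness via Lemma~\ref{lma: CGHS2}, combined with the $2$-Lipschitz bound. You spell out a few steps the paper leaves implicit (deriving $1$-Lipschitz-ness of $c^0_{k,B}$ from subadditivity and Hofer--Lipschitz, the Cauchy argument for the extension, and the handling of $d_{C^0}$ not being left-invariant via continuity of inversion on $\mathrm{Homeo}(S^2)$), which is exactly the content of the paper's remark that $\tau$ is uniformly continuous for the uniform structure given by $\{(\theta,\psi):d_{C^0}(\theta^{-1}\psi,\id)<\delta\}$.
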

\begin{proof}
Let $U$ be the open disk of area $\min\{B,B'\}$ disjoint from \[\bb L^0_{k,k',B,B'} = \bb L^0_{k,B}  \cup \bb L^0_{k',B'}.\] Then by the Calabi property we obtain that $\tau = \tau_{k,k',B,B'}$ satisfies $\tau(\psi) = 0$ for all $\psi$ supported in $U.$ (Indeed, in this case it is easy to find a Hamiltonian $H$ supported in $U$ that generates $\psi.$) For $\eps > 0$ consider $\delta > 0$ provided by Lemma \ref{lma: CGHS2}. For $\theta \in \Ham(S^2)$ define the $C^0$-neighborhood $\cl{U}_{\theta,\delta} = \{ \theta \phi\;|\; d_{C^0}(\phi,\id) < \delta \}.$ Then for each $\theta \phi \in \cl{U}_{\theta,\delta}$ \[|\tau(\theta \phi) - \tau(\theta \psi)| \leq 2 \eps\] for $\psi$ supported in $U$ provided by Lemma \ref{lma: CGHS2}. However, in view of the controlled additivity property and the above vanishing property of $\tau,$ we have $\tau(\theta \psi) = \tau(\theta).$ Hence \[|\tau(\theta \phi) - \tau(\theta)| \leq 2 \eps,\] which proves the $C^0$-continuity of $\tau.$ Note that it proves more: in fact $\tau$ is uniformly continuous with respect to the uniform structure on $\Ham(S^2)$ given by the neighborhoods $\cl{V}_{\delta} = \{ (\theta,\psi)\;|\; d_{C^0}(\theta^{-1}\psi,\id) < \delta\},$ for $\delta > 0,$ of the diagonal in $\Ham(S^2) \times \Ham(S^2).$ Therefore $\tau$ extends to $G_{S^2}.$
\end{proof}




It is convenient to consider $\bb L^0_{1,1/2} = S \subset S^2$ to be the standard equator. Let $\tau_{k,B} = \tau_{k,1,B,1/2},$ where $c^0_{1,1/2}$ is the Lagrangian spectral invariant of $S \subset S^2.$ By Theorem \ref{thm-extends}, $\tau_{k,B}$ extends to $G_{S^2}^F$. We remark that with minor modifications the argument below also works for $\tau_{k,k',B,B'}$ with $k',B'$ fixed. 

\medskip
\noindent
{\it Proof of Theorem \ref{thm: non-simplicity}:}
Recall that we are given a smooth function $h:[-1/2,1/2) \to \R$  vanishing on $[-1/2,0].$ Consider $\phi \in G_{\bb D^2} \subset G_{S^2}$ generated by $H = h \circ z$.
Assume that a homeomorphism $\phi$, generated by the Hamiltonian $H$, lies in $G_{S^2}^F$, i.e., $\phi$ is the $C^0$-limit of Hamiltonian diffeomorphisms of Hofer's norm $\leq C$. Then
$|\tau_{k,B}(\phi)| \leq 2C$ for all $k$.
Observe now that by Lagrangian control and normalization properties,
for every rational $B \in (0,1/2),$ 
\begin{equation} \label{eq-Calabi-tau}
\left|\int_0^{1/2-B} h(s) ds \right| = (1-2B)\lim_{k \to +\infty} |\tau_{k,B}(\phi)| \leq 2C.
\end{equation}
Hence $h$ has bounded primitive.
\qed

\begin{qtn} {\rm Formula \eqref{eq-Calabi-tau} shows that one can reconstruct the Calabi invariant
of rotationally-symmetric Hamiltonian diffeomorphisms by using invariants $\tau_{k,B}$. Does
such a reconstruction exist for more general symplectomorphisms?
}
\end{qtn}

\color{black}

Now we prove the following consequence of Theorem \ref{thm: non-simplicity} (compare \cite{LeRoux-simplicity}). Recall first that, as explained in \cite[Proposition 2.2]{CGHS1} by a well-known argument of Epstein and Higman \cite{Epstein, Higman}, the group $G^F_{S^2}$ contains the commutator subgroup $[G_{S^2},G_{S^2}]$ of $G_{S^2}.$ Therefore the quotient group $Q = G_{S^2}/G^{F}_{S^2}$ is abelian. Hence every subgroup $H_0 \subset Q$ of $Q$ is normal and its preimage $H = \pi^{-1}(H_0)$ under the natural projection $\pi: G_{S^2} \to Q$ is a normal subgroup of $G_{S^2}$ containing $G^F_{S^2}.$ The results \cite{CGHS1,CGHS2} on the non-simplicity of $G_{S^2}$ are equivalent to the construction of various explicit embeddings $\R \hookrightarrow Q.$ Using our spectral invariants, we provide an explicit embedding of a large function space into $Q$ whose image contains all these copies of $\R.$
	



Let $\mathcal{G} \subset C^1((0,1/2])$ denote $C^1$-functions constant near $1/2,$ and let $\mathcal{G}_b$ denote bounded such functions. 

\begin{thm}\label{thm: subgroups}
The map $j: \mathcal{G} \to G_{S^2},$ $\rho \mapsto \phi,$ where $\phi$ is given by $H = h \circ z$ as above where $h(s) = -\rho'(1/2-s)$ for $s \in (0,1/2)$ and $h(s)=0$ for $s \leq 0,$ induces a monomorphism \[ \mathcal{G}/\mathcal{K} \hookrightarrow G_{S^2}/G^F_{S^2}\] for a subgroup $\cl K \subset \mathcal{G}_b.$ In particular, every subgroup of $\mathcal{G}/\mathcal{G}_b$ yields a normal subgroup of $G_{S^2}$ containing $G^F_{S^2}.$
\end{thm}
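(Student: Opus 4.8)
The plan is to package the invariants $\tau_{k,B}$ into a single map recording the primitive of $h$, and then quotient by its obstruction. Concretely, I would proceed as follows.

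First, I set up the target. As noted in the excerpt, $Q = G_{S^2}/G^F_{S^2}$ is abelian, so it suffices to produce a homomorphism $\mathcal{G} \to Q$ whose kernel is contained in $\mathcal{G}_b$ and contains (or, after an extra argument, equals up to $\mathcal{G}_b$) a fixed subgroup $\mathcal{K} \subset \mathcal{G}_b$. Observe that for $\rho \in \mathcal{G}$ the associated $h(s) = -\rho'(1/2-s)$ has primitive $\int_0^{1/2-B} h(s)\,ds = \rho(1/2) - \rho(B')$ where $B' = 1/2 - B$; in other words, the quantities extracted by $\tau_{k,B}$ via formula \eqref{eq-Calabi-tau} recover, in the limit $k \to \infty$, the values of $\rho$ itself (up to the fixed additive constant $\rho(1/2)$, which is irrelevant modulo constants). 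Since $j$ is manifestly a homomorphism from $(\mathcal{G},+)$ into $G_{S^2}$ (the Hamiltonians $h \circ z$ are radial, hence their flows commute and compose additively), composing with $\pi$ gives a homomorphism $\bar{j} = \pi \circ j : \mathcal{G} \to Q$.

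Second, I identify the kernel. If $\rho \in \ker(\bar j)$, then $\phi = j(\rho) \in G^F_{S^2}$, so Theorem \ref{thm: non-simplicity} applies and the primitive of $h$ is a bounded function; but the primitive of $h$ is (up to an additive constant) $\rho$ itself on $(0,1/2]$, and since $\rho$ is $C^1$ and constant near $1/2$, boundedness of its primitive on $(0,1/2]$ is equivalent to $\rho$ being bounded, i.e. $\rho \in \mathcal{G}_b$. Hence $\ker(\bar j) \subset \mathcal{G}_b$. Setting $\mathcal{K} := \ker(\bar j)$, this is by construction a subgroup of $\mathcal{G}_b$, and $\bar j$ descends to a monomorphism $\mathcal{G}/\mathcal{K} \hookrightarrow Q = G_{S^2}/G^F_{S^2}$, which is the first assertion. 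For the "in particular" clause: the inclusion $\mathcal{K} \subset \mathcal{G}_b$ gives a surjection $\mathcal{G}/\mathcal{K} \twoheadrightarrow \mathcal{G}/\mathcal{G}_b$ with kernel $\mathcal{G}_b/\mathcal{K}$; pulling back any subgroup $S_0 \subset \mathcal{G}/\mathcal{G}_b$ along this surjection and then along the monomorphism into $Q$ produces a subgroup of $Q$, whose preimage under $\pi$ is a normal subgroup of $G_{S^2}$ containing $G^F_{S^2}$, as all subgroups of the abelian group $Q$ are normal.

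The main obstacle is the precise bookkeeping in the second step: one must verify carefully that the function $h$ built from $\rho$ is genuinely smooth on $[-1/2,1/2)$ (it vanishes identically on $[-1/2,0]$ since $\rho$ is constant near $1/2$, so no matching condition at $0$ is violated, and it extends smoothly across $0$) so that Theorem \ref{thm: non-simplicity} is literally applicable, and that "primitive of $h$ bounded" translates exactly to "$\rho$ bounded" — here one uses that $\rho$ is constant near $1/2$ so the only possible unboundedness is as $s \to 0^+$, matching the interval $[-1/2,0]$ on which $h$ vanishes. A secondary point is to confirm that $j$ indeed lands in $G_{S^2}$, i.e. that $H = h \circ z$ with $h$ smooth on $[-1/2,1/2)$ and vanishing near $-1/2$ generates a Hamiltonian homeomorphism of $S^2$ that is a $C^0$-limit of honest Hamiltonian diffeomorphisms — this is exactly the approximation scheme described in the paragraph preceding Theorem \ref{thm: non-simplicity} (truncating $h$ to $h_i$ constant near $1/2$), so it is available to us. Modulo these routine verifications, the argument is a direct assembly of Theorem \ref{thm: non-simplicity}, the Epstein–Higman fact that $[G_{S^2},G_{S^2}] \subset G^F_{S^2}$, and elementary group theory.
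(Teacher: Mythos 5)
Your proposal is correct and follows essentially the same route as the paper's proof: set $\mathcal{K}=\Ker(\pi\circ j)$, invoke Theorem~\ref{thm: non-simplicity} to conclude $\mathcal{K}\subset\mathcal{G}_b$, and use abelianness of $Q$ for the ``in particular'' clause. (Minor bookkeeping slip: the substitution $u=1/2-s$ gives $\int_0^{1/2-B}h(s)\,ds=\rho(B)-\rho(1/2)$, not $\rho(1/2)-\rho(1/2-B)$, but this has no effect on the argument since either expression is bounded in $B$ iff $\rho$ is bounded.)
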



\begin{proof}

From the homomorphism $j: \cl G \to G_{S^2}$ we immediately obtain a homomorphism $j_1 = \pi \circ j: \cl G \to Q = G_{S^2}/G^F_{S^2}.$ It suffices to prove that $\cl K =  \Ker(j_1) \subset \cl G_b.$ Now if $\rho \in \Ker(j_1)$ then $\phi = j(\rho) \in G^F_{S^2}.$ By Theorem \ref{thm: non-simplicity} this implies that $\rho$ is bounded, that is, $\rho \in \cl G_b.$ \end{proof}

%
%
%
%

\begin{rmk}
It is easy to see by construction that direct analogues of Theorems \ref{thm: non-simplicity} and \ref{thm: subgroups} hold for $G_{\D^2}$ and $G_{\D^2}^F.$
\end{rmk}

\begin{rmk}
It would be interesting to compare $\cl K$ and $\cl G_b.$ See Remark \ref{rmk: discrepancy}.
\end{rmk}

\begin{rmk}
Finally, it is easy to see that for instance Theorem \ref{thm: main ker Cal} about the existence of flats in $\Ham(S^2)$ extends naturally to the groups $\mrm{Hameo}(S^2), \mrm{Hameo}_c(\bb D^2)$ from \cite{OhMuller} with their respective Hofer's metrics, and possibly further to $G_{S^2}^F, G_{\D^2}^F.$
\end{rmk}


\section{Lagrangian spectral invariants and estimators }\label{sec: prelim}

Here we construct Lagrangian estimators described in Section \ref{sec-estimators}
by using Lagrangian spectral invariants in symmetric products, and prove Theorems \ref{thm: main spec k} and \ref{thm: main k}. Interestingly enough, a remarkable Toeplitz tridiagonal
matrix, the $A_k$ Cartan matrix, naturally appears in the course of
our calculation of the critical points of the Landau-Ginzburg superpotential
combined from smooth and orbifold terms.

\subsection{Lagrangian Floer homology with bounding cochains and bulk deformation}

We briefly discuss the general algebraic properties of the Lagrangian Floer homology theory with weak bounding cochains and bulk. We refer to \cite{MakSmith-links} for a slightly more detailed discussion, and to the original work \cite{FO3:book-vol12,FO3-qm,FO3-toric-bulk, FO3-degeneration,FO3-toric-I} for all detailed definitions. We also remark that the Fukaya algebra of a Bohr-Sommerfeld Lagrangian submanifold in a rational symplectic manifold, that is when $[\om]$ is contained in the image of $H^2(M;\Q)$ inside $H^2(M;\R),$ was constructed in \cite{CharestWoodward-Fukaya} by classical transversality techniques. Hence one could feasibly carry out all constructions in this paper by the above techniques, when restricted to the rational setting, which is sufficient for our purposes. We also note that we establish our result as a rather formal consequence of the methods of \cite{MakSmith-links,FO3:book-vol12,FO3-qm}, hence it applies with whichever perturbation schemes these papers do.

For a subgroup $\Gamma \subset \R,$ define the Novikov field with coefficients in the field $\bK = \C$ as \[\Lambda_{\Gamma} = \{\sum_j a_j T^{\kappa_j}\,|\, a_j \in \bK,\, \kappa_j \in \Gamma,\, \kappa_j \to +\infty \}.\] This field possesses a non-Archimedean valuation $\nu: \Lambda_{\Gamma} \to \R \cup \{+\infty\}$ given by $\nu(0) = +\infty,$ and \[\nu(\sum a_j T^{\kappa_j}) = \min\{\kappa_j\,|\,a_j \neq 0 \}.\] For now we may assume that $\Gamma = \R,$ but later it will be important to choose a smaller subgroup. We often omit the subscript $\Gamma,$ and write $\Lambda$ for $\Lambda_{\Gamma}.$ Set $\Lambda_0 = \nu^{-1}([0,+\infty)) \subset \Lambda$ to be the subring of elements of non-negative valuation, and $\Lambda_{+} = \nu^{-1}((0,+\infty)) \subset \Lambda_0$ the ideal of elements of positive valuation.

Given a closed connected oriented spin Lagrangian submanifold $L \subset M,$ and an $\om$-tame almost complex structure on $M,$ considering the moduli spaces of $J$-holomorphic disks with boundary on $L$ and with boundary and interior punctures, and suitable virtual perturbations required to regularize the problem, as well as suitable homological perturbation techniques, yields the following maps. First, considering only $k+1$ boundary punctures, for $k \geq 0,$ we have the maps \[ m_k: H^*(L;\Lambda)^{\otimes k} \to H^*(L;\Lambda).\] These maps satisfy the relations of a curved filtered $A_{\infty}$ algebra. Furthermore, these maps decompose as $m_k = \sum m_{k,\beta} T^{\om(\beta)},$ where the sum runs over the relative homology classes $\beta \in H_2(M,L; \Z)$ of disks in $M$ with boundary on $L.$ Moreover, considering also $l$ interior punctures yields maps \[ q_{l,k}:H^*(M;\Lambda)^{\otimes l} \otimes H^*(L;\Lambda)^{\otimes k} \to H^*(L;\Lambda).\] Given a ``bulk" class $\mathbf{b} \in H_{\ast}(M; \Lambda_{+})$ we can deform the $m_k$ operations to: \[ m^{\bf b}_k(x_1 \otimes \ldots \otimes x_k) = \sum_{r\geq 0} q_{r,k}(\bf b^{\otimes r} \otimes x_1 \otimes \ldots \otimes x_k).\] The operations $m^{\bf b}_k$ for $k \geq 0$ also satisfy the relations of a curved filtered $A_{\infty}$ algebra, and decompose into a sum $m^{\bf b}_k = \sum m^{\bf b}_{k,\beta} T^{\om(\beta)}$ as above, over the relative homology classes $\beta \in H_2(M,L; \Z)$ of disks in $M$ with boundary on $L.$

Furthermore, given a ``cochain" class $b = b_0 + b_+$ for $b_0 \in H^1(L;\C)$ and $b_+ \in H^1(L;\Lambda_{+}),$ we define the $b$-twisted $A_{\infty}$ operations with bulk $\bf b \in H_*(M;\Lambda_{+})$ as follows: first set \[ m^{\bf b, b_0}_{k, \beta} = e^{\brat{b_0, \del \beta}} \,m^{\bf b}_{k, \beta}.\] These operations again satisfy the curved filtered $A_{\infty}$ equations. Now, to further deform by $b_+ \in H^1(L;\Lambda_{+}),$  we set \[ m^{\bf b, b}_{k, \beta}(x_1 \otimes \ldots \otimes x_k) = \sum_{l \geq 0} \sum m^{\bf b, b_0}_{k+l, \beta}(b_+ \otimes \ldots \otimes b_+ \otimes x_1 \otimes b_+ \otimes\ldots \otimes b_+ \otimes x_k \otimes b_+ \otimes \ldots \otimes b_+),\] where the interior sum runs over all possible positions of the $k$ symbols $x_1,\ldots,x_k$ appearing in this order, and $l$ symbols $b_+,$ and finally we set \[ m^{\bf b, b}_{k} = \sum_{\beta} T^{\om(\beta)} m^{\bf b, b}_{k, \beta}.\] The operations $m^{\bf b, b}_{k}$ also satisfy the curved filtered $A_{\infty}$ equations. Moreover, if $b_0 - b'_0 \in H^1(L; 2\pi \sqrt{-1}\Z),$ then  $m^{\bf b, b}_{k} = m^{\bf b, b'}_{k},$ $b = b_0 + b_+, b' = b'_0 + b_+.$

Given $\bf b \in H_*(M;\Lambda_{+}),$ we say that $b = b_0+b_+ \in H^1(L;\Lambda_0)$ is a {\em weak bounding cochain} for $\{m^{\bf b}_k\}$ if there exists a constant $c \in \Lambda_{+}$ such that \[ \sum_k m^{\bf b, b_0}_k(b_+ \otimes \ldots \otimes b_+) = c \cdot 1_L,\] where $1_L \in H^0(L;\Lambda_0)$ is the cohomological unit.  We call $c = W^{\bf b}(b)$ the potential function of the weak bounding cochain $b.$ Note that it depends only on the class of $b$ in $H^1(L;\Lambda_0)/H^1(L;2\pi\sqrt{-1} \Z),$ which is well-defined as there is no torsion in $H^1(L;\Z).$ We shall henceforth consider bounding cochains as elements of $H^1(L;\Lambda_0)/H^1(L;2\pi\sqrt{-1} \Z).$

Now we have the following result regarding Lagrangian tori, which was first proven in the context of Lagrangian torus fibers of toric manifolds in {\cite[Theorem 4.10]{FO3-toric-I}, \cite[Theorem 3.16]{FO3-toric-bulk}}.

\begin{thm}[{\cite[Theorem 2.3]{FO3-degeneration}}]\label{thm: crit W}
If $L \cong T^n$ is a Lagrangian torus and all elements of $H^1(L;\Lambda_0)/H^1(L;2\pi\sqrt{-1} \Z)$ are weak bounding cochains for $\{m^{\bf b}_k\},$ then if $b$ is a critical point of the potential function \[W^{\bf b}: H^1(L;\Lambda_0)/H^1(L;2\pi\sqrt{-1} \Z) \to \Lambda_{+}\] with $H^1(L;\Lambda_0)/H^1(L;2\pi\sqrt{-1} \Z) \cong (\Lambda_0/2\pi\sqrt{-1} \Z)^n$ identified with $\Lambda_0\setminus\Lambda_{+}$ by the exponential function, then $m^{\bf b, b}_1 = 0$ and hence the $(\bf b, b)$-deformed Floer cohomology of $L$ is isomorphic to $H^*(L;\Lambda).$ Moreover, this implies that $L$ is non-displaceable by Hamiltonian isotopies in $M.$
\end{thm}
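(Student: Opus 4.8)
\begin{pfs}
The plan is to extract the formal core of the Fukaya--Oh--Ohta--Ono argument --- the part that uses only the torus hypothesis on $L$ and the general structure of bulk-deformed Lagrangian Floer theory, not toricity of $M$. Write $b = b_0 + b_+$ with $b_0 \in H^1(L;\C)$ and $b_+ \in H^1(L;\Lambda_{+})$, and recall that the hypothesis ``all classes are weak bounding cochains'' means precisely that $W^{\mathbf{b}}$ is a well-defined function on $H^1(L;\Lambda_0)/H^1(L;2\pi\sqrt{-1}\Z)$ and that $\sum_k m^{\mathbf{b},b_0}_k(b_+ \otimes \cdots \otimes b_+) = W^{\mathbf{b}}(b)\, 1_L$ for every such $b$. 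The first step is to observe that $m^{\mathbf{b},b}_1$ is then an honest differential: in the $b$-twisted curved $A_\infty$ structure the curvature is exactly $W^{\mathbf{b}}(b)\, 1_L$, and the arity-one $A_\infty$ relation expresses $m^{\mathbf{b},b}_1 \circ m^{\mathbf{b},b}_1$ through $m^{\mathbf{b},b}_2(W^{\mathbf{b}}(b)\,1_L, -)$ and $m^{\mathbf{b},b}_2(-, W^{\mathbf{b}}(b)\,1_L)$, which cancel by strict unitality of $1_L$. Hence $HF^{(\mathbf{b},b)}(L) := H\big(H^*(L;\Lambda),\, m^{\mathbf{b},b}_1\big)$ is defined.

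The heart of the proof --- and the step I expect to be the main obstacle --- is the explicit computation of $m^{\mathbf{b},b}_1$ on the canonical de Rham model $H^*(L;\Lambda) \cong \Lambda \otimes \Lambda^{\bullet}(\C^n)$. Fix a basis $\mathbf{e}_1,\dots,\mathbf{e}_n$ of $H^1(L;\Z)$ with dual coordinates $x_i$ on $H^1(L;\Lambda_0)$, and set $y_i = e^{x_i}$, so that $W^{\mathbf{b}}$ becomes a series in the $y_i^{\pm 1}$; the exponential identifies the space of bounding cochains with the valuation-zero torus $(\Lambda_0 \setminus \Lambda_{+})^n$, under which a critical point of $W^{\mathbf{b}}$ in the $x$-coordinates corresponds to a common zero of the logarithmic derivatives $y_i\,\partial_{y_i} W^{\mathbf{b}}$. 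I would then feed the divisor axiom for the operations $q_{l,k}$ and $m_{k,\beta}$ --- inserting one more degree-one input $\mathbf{e}_i$ into the disk operation labelled by $\beta$ multiplies the contribution by $\langle \mathbf{e}_i, \del\beta\rangle$, which is exactly the effect of $y_i\,\partial_{y_i}$ on $T^{\om(\beta)} y^{\del\beta}$ --- into the defining sum of $m^{\mathbf{b},b}_1$. Careful bookkeeping of the $b_+$-insertions through this axiom produces FOOO's formula identifying $m^{\mathbf{b},b}_1$ with the Koszul-type differential on $\Lambda \otimes \Lambda^{\bullet}(\C^n)$ attached to the $n$-tuple $\big(y_1\partial_{y_1}W^{\mathbf{b}}(b), \dots, y_n\partial_{y_n}W^{\mathbf{b}}(b)\big)$; the exterior-algebra structure of $H^*(T^n)$, together with the fact that $m^{\mathbf{b},b}_1$ is a derivation over the leading term $\cup$ of $m^{\mathbf{b},b}_2$, is what propagates the computation from $H^1$ to all degrees. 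Therefore, when $b$ is a critical point of $W^{\mathbf{b}}$ all these derivatives vanish, $m^{\mathbf{b},b}_1 = 0$, and $HF^{(\mathbf{b},b)}(L) \cong H^*(L;\Lambda) \neq 0$. The delicate points here --- turning the divisor axiom into an identity valid on the nose rather than only to leading Novikov order, and controlling the higher $m^{\mathbf{b},b}_k$ contributions to $m^{\mathbf{b},b}_1$ after the twist --- are precisely what one imports from \cite{FO3-toric-I, FO3-toric-bulk}.

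Finally, non-displaceability is a formal consequence. If some $\psi \in \Ham(M)$ had $\psi(L) \cap L = \emptyset$, then the Floer complex $CF^{(\mathbf{b},b)}(L,\psi(L))$ would have no generators, hence vanishing cohomology. On the other hand, transporting $\mathbf{b}$ and $b$ along the Hamiltonian isotopy and invoking the continuation isomorphisms of bulk-deformed Lagrangian Floer cohomology gives $HF^{(\mathbf{b},b)}(L,\psi(L)) \cong HF^{(\mathbf{b},b)}(L) \cong H^*(L;\Lambda) \neq 0$, a contradiction. Hence $L$ is non-displaceable by Hamiltonian isotopies in $M$.
\end{pfs}
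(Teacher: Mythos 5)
The paper does not prove this statement: it is cited wholesale from Fukaya--Oh--Ohta--Ono's papers on toric Lagrangian Floer theory, so there is no internal proof to compare your proposal against. Taken on its own terms, your sketch is a faithful high-level reconstruction of the FOOO argument, with the key structural points correctly identified: the curved $A_\infty$ arity-one relation together with strict unitality of $1_L$ makes $m^{\mathbf{b},b}_1$ square to zero; the divisor axiom converts each degree-one insertion labelled by $\beta$ into the factor $\langle \mathbf{e}_i,\partial\beta\rangle$, i.e.\ the operator $y_i\,\partial_{y_i}$ on $T^{\om(\beta)}y^{\partial\beta}$, thereby identifying $m^{\mathbf{b},b}_1|_{H^1}$ with the gradient of $W^{\mathbf{b}}$; the computation propagates from $H^1$ to all of $H^*(T^n)$ because $H^*(T^n)$ is generated by $H^1$ under cup product and $m^{\mathbf{b},b}_1$ satisfies the Leibniz rule over $m^{\mathbf{b},b}_2$; and the non-displaceability endgame is the standard one via continuation isomorphisms versus the vanishing of a Floer complex with no generators. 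You also explicitly flag the genuinely delicate steps --- making the divisor-axiom identity exact over $\Lambda_0$ (rather than merely to leading order) and controlling the higher $b_+$-twisted operations that feed into $m^{\mathbf{b},b}_1$ --- as what one imports from \cite{FO3-toric-I,FO3-toric-bulk}, which is exactly the honest thing to do for a statement the paper itself treats as a black box. No gap in your reasoning given that scoping; the only caveat is that the ``Koszul-type differential'' phrasing describes the vanishing of $m^{\mathbf{b},b}_1$ correctly but does not capture the finer Clifford-algebra structure on $m^{\mathbf{b},b}_2$ that FOOO establish when the Hessian of $W^{\mathbf{b}}$ at $b$ is nondegenerate --- a refinement not needed for the statement at hand.
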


Finally we note that $H^*(L;\Lambda) \cong H^*(L;\C) \otimes_{\C} \Lambda$ possesses a non-Archimedean filtration function $\cl{A}$ determined by requiring that the basis $E = E_0 \otimes 1,$ for $E_0 = (e_1,\ldots,e_B)$ a basis of $H^*(L;\C)$ be orthonormal in the sense that $\cl{A}(e_j \otimes 1) = 0$ for all $1\leq j \leq B,$ $\cl{A}(0) = -\infty,$ and for all $(\lambda_1,\ldots,\lambda_B) \in \Lambda^{B},$ \[\cl{A}(\sum \lambda_j e_j \otimes 1) = \max \{ \cl{A}(e_j \otimes 1) - \nu(\lambda_j)\}.\] It is important to note that the $A_{\infty}$ algebra $\{m_k^{\bf b, b}\}$ from Theorem \ref{thm: crit W} has unit $1_L \in H^*(L;\Lambda)$ of non-Archimedean filtration level $\cl{A}(1_L) = 0.$


\subsection{Lagrangian spectral invariants}\label{subsec:Lagrangian spectral}
In this section we discuss Lagrangian spectral invariants. While essentially going back to Viterbo \cite{Viterbo-specGF}, they were defined in Lagrangian Floer homology by a number of authors in varying degrees of generality, starting with Oh \cite{Oh-relative-cot}, Leclercq \cite{Leclercq-spectral}, and Monzner-Vichery-Zapolsky \cite{MonznerVicheryZapolsky}. However, the two main contributions that are relevant to our goals are the paper of Leclercq-Zapolsky \cite{LeclercqZapolsky} in the context of monotone Lagrangians, and of Fukaya-Oh-Ohta-Ono \cite[Definition 17.15]{FO3-qm} in the context of Floer homology with bounding cochains and bulk deformation.

\subsubsection{Discrete submonoids and their associated subgroups}

First we formulate the spaces of possible values of our spectral invariants. Following \cite{FO3-qm}, consider elements $\bf b \in H_*(M;\Lambda_0),$ $b \in H^1(L;\Lambda_0).$ We say that they are {\em gapped} if they can be written as \[ \bf b = \sum_{g \in G(\bf{b})} \bf b_g T^{g} ,\;\; \bf{b}_g \in H_*(M;\C) \] \[ b = \sum_{g \in G(b)} b_g T^{g},\;\; b_g \in H^1(L;\C)\] where $G(\bf b),$ $G(b)$ are discrete submonoids of $\R_{\geq 0}.$ In practice all relevant elements $\bf b, b$ will be gapped, so we assume that they are for the rest of this section.

Given an $\om$-tame almost complex structure $J$ on $M$ define the submonoid $G(L,\om,J)$ to be generated by the areas $\om([u])$ of all $J$-holomorphic disks $u$ with boundary on $L.$ It is discrete by Gromov compactness. Note that in the orbifold setting below, one includes both areas of smooth holomorphic disks and those of orbifold holomorphic disks.

\begin{df}\label{def: submonoid}
Let $G(L,\bf b,b) \subset \R_{\geq 0}$ be the discrete submonoid generated by the union $G(\bf b) \cup G(b) \cup G(L,\om,J).$ Furthermore let $\Gamma(\bf b),$ $\Gamma(b),$ $\Gamma(L,\om,J),$ $\Gamma(L, \bf b, b)$ be the {\em subgroups} of $\R$ generated by the monoids $G(\bf b) , G(b) , G(L,\om,J), G(L,\bf b,b)$ respectively. We call $(L,\bf b, b)$ {\em rational} if $\Gamma(L,\bf b, b)$ is a discrete subgroup of $\R.$
\end{df}

Given a Hamiltonian $H \in \sm{[0,1] \times M, \R}$ we consider the chords $x:[0,1] \to M$ with $x(0),x(1) \in L,$ satisfying $\dot{x}(t) = X^t_H(x(t))$ for all $t \in [0,1].$ Furthermore, we restrict attention to only those chords that are contractible relative to $L.$ Set $\spec(\LH)$ to be the set of all actions of pairs $(x,\ol{x})$ each consisting of a chord $x$ and its contraction $\ol{x}$ to $L,$ called a capping, $\overline{x}:\D \to M,$ $\overline{x}|_{\del \D \cap \{\Im(z) \geq 0\}} = x,$ $\overline{x}(\del \D \cap \{\Im(z) \leq 0\}) \subset L.$ The action of $(x, \ol{x})$ is given by \[\cA_{\LH}(x,\overline{x}) = \int_0^1 H(t,x(t))\,dt - \int_{\overline{x}} \om.\]

\begin{df}\label{def: deformed spec}
Define the $(\bf b, b)$-deformed spectrum $\mrm{Spec}(\LH,\bf b, b)$ of $\LH$ by \[ \mrm{Spec}(\LH,\bf b, b) = \spec(\LH) + \Gamma(L,\bf b, b).\] We recall that for two subsets $A,B$ of $\R,$ $A+B$ is the subset of $\R$ given by $A+B = \{ a+b \;|\; a\in A,\, b\in B \}.$
\end{df}

\subsubsection{Filtered Floer complex and spectral invariants}


Consider a Hamiltonian $H$ and $L \subset M$ a Lagrangian as above. Let $(\bfb)$ be a weak bounding cochain with bulk deformation. Assuming that $(\LH)$ is non-degenerate, that is $\phi^1_H(L)$ intersects $L$ transversely, and $\{J_t\}_{t \in [0,1]}$ a time-dependent $\om$-compatible almost complex structure on $M,$ following \cite{FO3:book-vol12,FO3-qm} one constructs a filtered finite rank $\Lambda_0$-complex $CF(\LH,\bfb; \Lambda_0),$ where $\Lambda = \Lambda_{\Gamma}$ for $\Gamma = \Gamma(L,\bf b, b).$ 


We set $CF(\LH,\bfb) = CF(\LH,\bfb;\Lambda_0) \otimes_{\Lambda_0} \Lambda.$ The module $CF(\LH,\bfb)$ comes with an non-Archimedean filtration function $\cl{A}_{\LH}$ whose values are contained in $\spec(\LH,\bf b, b) \cup \{-\infty\}.$ Indeed, as a $\Lambda$-module $CF(\LH,\bfb)$ is given by the completion with respect to the action functional of the vector space generated by pairs $(x,\ol{x})$ of Hamiltonian $H$-chords from $L$ to $L$ contractible relative to $L,$ where we identify between $(x,\ol{x}),$ $(x,\ol{x}')$ if $v = \ol{x}' \# \ol{x}^{-}: (\D,\del \D) \to (M,L),$ defined by gluing suitably reparametrized $\ol{x}'$ and $\ol{x}^{-}(z) = \ol{x}(\ol{z})$ along their common boundary chord, satisfies $\brat{[\om],[v]} = 0.$ In this case the filtration $\cl{A}$ is defined by declaring any $\Lambda$-basis $[(x_i,\ol{x}_i)],$ for $\{x_i\}$ the finite set of contractible $H$-chords from $L$ to $L$ an orthogonal $\Lambda$-basis of $CF(\LH,\bfb).$ Finally, the homology $HF(\LH,\bfb)$ of $CF(\LH,\bfb)$ is naturally isomorphic to self-Floer homology $HF(L,\bfb) = HF((L,\bfb),(L,\bfb))$ of $L$ deformed by $(\bfb).$ We write $\Phi_H: HF(L,\bfb) \to HF(\LH,\bfb)$ for this isomorphism. In the setting of Theorem \ref{thm: crit W} there is a natural isomorphism between $HF(\LH,\bfb)$ and $H^*(L,\Lambda),$ which in particular explains why $L$ is not Hamiltonianly displaceable: if $\phi^1_H(L) \cap L = \emptyset$ then $CF(\LH,\bfb) = 0$ and hence one must have $HF(\LH,\bfb) = 0.$

For $a \in \R$ the subspace $CF(\LH,\bfb)^a$ generated over $\Lambda_0$ by all $[(x,\bar{x})]$ satisfying $\cA_{\LH}(x,\bar{x}) < a$ forms a subcomplex of $CF(\LH,\bfb).$ We denote its homology by $HF(\LH,\bfb)^a.$ It comes with a natural map $HF(\LH,\bfb)^a \to HF(\LH,\bfb).$

Given a class $z \in HF(L,\bfb),$ we define its spectral invariant with respect to $H$ as in \cite[Definition 17.15]{FO3-qm} by \[ c(L,\bfb; z, H) = \inf \{a \in \R\;|\; \Phi_H(z) \in \ima(HF(\LH,\bfb)^a \to HF(\LH,\bfb))\}.\]

In the sequel we will primarily work with $z = 1_L,$ the unit of the algebra on $HF(L,\bfb)$ induced by the $m_2^{\bfb}$ operation. Following the arguments of \cite[Theorem 7.2]{FO3-qm}, \cite[Theorem 35]{LeclercqZapolsky} together with the argument in \cite[Remark 4.3.2]{PolterovichRosen} for rational spectrality, it is straightforward to show the following properties of the spectral invariant $c(L,\bfb; z, H)$:

\begin{enumerate}
	\item {\em non-degenerate spectrality:} for each $z \in HF(L,\bfb) \setminus \{0\},$ and Hamiltonian $H$ such that $\LH$ is non-degenerate, \[c(L,\bfb;z,H) \in \spec(\LH,\bfb).\]
	\item {\em rational spectrality:} if $\Gamma(H,\bfb)$ is rational, then for $z \in HF(L,\bfb) \setminus \{0\},$ the condition \[c(L,\bfb;z,H) \in \spec(\LH,\bfb)\] holds for {\em each} Hamiltonian $H,$
	\item {\em non-Archimedean property:} for each Hamiltonian $H,$ $c(L,\bfb;-,H)$ is a non-Archimedean filtration function on $HF(L,\bfb)$ as a module over the Novikov field $\Lambda$ with its natural valuation.
	\item {\em Hofer-Lipschitz:} for each $z \in HF(L,\bfb) \setminus \{0\},$ \[|c(L,\bfb;z,F) - c(L,\bfb;z,G)| \leq \max\{\cE_{+}(F-G), \cE_{-}(F-G)\}\] where for a Hamiltonian $H,$ we set $\cE_{+}(H) = \int_{0}^{1} \max_M(H_t)\,dt$ and $\cE_{-}(H) = \cE_{+}(-H).$ Note that $\cE_{\pm}(H) \leq \int_0^1 \max_M |H_t|\,dt.$
	\item {\em normalization:} For each $H \in \sm{[0,1] \times M, \R}$ and $b\in \sm{[0,1],\R},$ \[c(L,\bfb;z,H+b) = c(L,\bfb;z,H) + \int_{0}^{1} b(t)\,dt.\]

	\item {\em monotonicity:} if two Hamiltonians $F,G$ satisfy $F_t \leq G_t$ for all $t \in [0,1],$ then $c(L,\bfb; z, F) \leq c(L,\bfb; z, G)$ for each $z \in HF(L,\bfb)$
	\item {\em Lagrangian control:} if $\Gamma(H,\bfb)$ is rational and $(H_t)|_L = c(t) \in \mathbb{R}$ for all $t \in [0,1]$ then setting $c_+(H) = c(L,\bfb; 1_L, H)$ we have \[c_{+}(H)=\int_0^1 c(t) \,dt\,\]
	hence for all $H \in \sm{[0,1] \times M,\R},$ $\;\int_0^1 \min_L H_t \,dt \leq c_{+}(H) \leq \int_0^1 \max_L H_t \,dt.$
	\item {\em homotopy invariance:} for $H_t$ of mean-zero for all $t \in [0,1],$ $c(L,\bfb;z,H)$ depends only on the class $\til{\phi}_H \in \til{\Ham}(M,\om)$ of the Hamiltonian isotopy $\{\phi^t_H\}_{t \in [0,1]},$
	\item {\em triangle inequality:} for each $z,w \in HF(L,\bfb),$ and Hamiltonians $F,G,$ \[c(L,\bfb; m^{\bfb}_2(z,w),F\#G) \leq c(L,\bfb;z ,F) + c(L,\bfb;w ,G),\] where $F \# G(t,x) = F(t,x) + G(t,(\phi^t_F)^{-1}x)$ generates the flow $\{ \phi^t_F \phi^t_G\}_{t \in [0,1]}.$ \end{enumerate}

\subsection{Orbifold setting}

We shall consider only a very simple kind of a symplectic orbifold $X.$ It is called a global quotient symplectic
orbifold, and consists of the data of a closed symplectic manifold $\til{M}$ and an effective symplectic action of a finite group $G$ on it.

In fact, we shall only consider $\til{M} = M^k = M \times \ldots \times M$ for a symplectic manifold $M$ and the symmetric group $G = \Sym_k$ acting on $\til{M}$ by permutations of the coordinates. The corresponding global quotient orbifold is called the symmetric power $X = Sym^k(M)$ of $M.$

The inertia orbifold $IX$ of a global quotient orbifold $X$ is itself a global quotient orbifold given by the action of $G$ on the disjoint union $\sqcup_{g \in G} \til{M}^g,$ where for $g\in G,$ $\til{M}^g$ is the fixed point submanifold of $g,$ and $f \in G$ acts by $\til{M}^g \to \til{M}^{fgf^{-1}},$ $x \mapsto fx.$ See \cite{Adem-book} for further details. 

\subsubsection{Orbifold Hofer metric}
While on a symplectic orbifold one can define smooth functions, and hence Hamiltonian diffeomorphisms, in the case of a global quotient orbifold $X,$ the data of a smooth Hamiltonian $H \in \sm{[0,1] \times X, \R}$ is equivalent to the data of a $G$-invariant Hamiltonian $\til{H} \in  \sm{[0,1] \times \til{M}, \R},$ that is $H(t,g\cdot x) = H(t,x)$ for all $t \in [0,1], x\in \til{M}, g\in G.$ In our particular case $\til{M} = M^k$ and our Hamiltonians satisfy $H(t,x_1,\ldots, x_k) = H(t,x_{\sigma^{-1}(1)},\ldots, x_{\sigma^{-1}(k)})$ for all $t \in [0,1], x_1,\ldots,x_k \in M,$ and $\sigma \in \Sym_k.$ Given an orbifold Hamiltonian diffeomorphism $\phi$ of $X,$ which is equivalently a $G$-equivariant Hamiltonian diffeomorphism $\til{\phi}$ of $\til{M}$ that is generated by a $G$-invariant Hamiltonian $H$ on $\til{M},$ we define its {\em orbifold Hofer distance} to the identity by \[ d_{\mrm{Hofer}}(\phi, \id) = \inf_{\phi_H^1 = \til{\phi}} \intoi \max_{\til{M}} H(t,-) - \min_{\til{M}} H(t,-) \; dt \] where the infinimum runs over all such $G$-invariant Hamiltonians generating $\til{\phi}.$ Finally we remark that orbifold Hamiltonian diffeomorphisms form a group $\Ham(X),$ it is isomorphic to the identity component $\Ham(\til{M})^G$ of the subgroup of $\Ham(\til{M})$ consisting of $G$-equivariant Hamiltonian diffeomorphisms, and $d_{\mrm{Hofer}}$ extends to a bi-invariant (non-degenerate) metric on $\Ham(X).$ We denote by $\til{\Ham}(X)$ the universal cover of this group with basepoint at the identity. This is also a group.

\subsubsection{Orbifold Lagrangian Floer homology with bulk, and spectral invariants}\label{subsec: Lagr spectral}

%
%

It was explained in \cite{ChoPoddar} and summarized in \cite{MakSmith-links} that the above setup of Lagrangian Floer homology with weak bounding cochain and bulk deformation $\bfb$ holds in the setting of a smooth Lagrangian submanifold $L$ in the regular locus of a closed effective symplectic orbifold $X$ (which for us will be a global quotient orbifold $\til{M}/G,$ and whose regular locus is $\til{M}^0/G$ where $\til{M}^0$ is the set of points with trivial stabilizer) with the additional feature that we may consider bulk deformations by classes in the homology $H_*(IX;\Lambda_{+})$ of the inertia orbifold of $X$ by counting holomorphic disks with possible orbifold singularities at the interior punctures. Furthermore, the above constructions of spectral invariants and arguments proving their properties go through in this situation.

\subsubsection{Lagrangians in symmetric products and their spectral invariants}\label{subsubsec: Mak-Smith}
Consider $S^2$ with the symplectic form $\om$ of total area $1.$ Under this normalization consider the Lagrangian configuration $L^{0,j}_{2,B},$ $j = 0,1$, see \eqref{eq-lzero} above.
In \cite{MakSmith-links} Mak and Smith show, as translated to our normalizations, that if \[M = (S^2 \times S^2, \om \oplus 2a \om),\] with $0 < a < 3B - 1 = B-C,$ then the Lagrangian link $\mathbb{L}_{2,B} = \mathbb{L}^0_{2,B} \times S^1$ in $M$ is Hamiltonianly non-displaceable: for all $\phi \in \Ham(X),$ $\phi(\mathbb{L}_{2,B}) \cap \mathbb{L}_{2,B} \neq \emptyset.$ 

The key observation of this paper is that in fact the proof of \cite{MakSmith-links} gives strictly stronger information. We recall their approach. The two connected components of $\mathbb{L}_{2,B}$ are $L^j_{2,B} = L^{0,j}_{2,B} \times S$ where $S \subset S^2(2a)$ is the equator. Consider the product Lagrangian \[\cl{L}'_{2,B} = {L}^0_{2,B} \times L^1_{2,B}  \subset \til{M} = M \times M.\] Since $\cl{L}'_{2,B}$ is disjoint from the diagonal $\Delta_M \subset M \times M,$ it descends to a smooth Lagrangian submanifold $\cl{L}_{2,B}$ in the regular locus ${X}^{\rm{reg}}$ of the symplectic global quotient orbifold ${X} = (M \times M)/(\Z/2\Z),$ where $\Z/2\Z$ acts by exchanging the factors. Similarly, in the situation of configurations for arbitrary values $(k,B)$ of parameters, one defines \[\cl{L}'_{k,B} = {L}^0_{k,B} \times \ldots \times L^{k-1}_{k,B} \subset M^k,\] and it descends to a Lagrangian $\cl{L}_{k,B}$ in the regular locus of $X^{\mrm{reg}}$ the symplectic orbifold $X = Sym^k(M) = M^k/\Sym_k,$ where the symmetric group $\Sym_k$ acts on $M^k$ by permutations of the coordinates.




In \cite{MakSmith-links} the authors have proved for $k=2,$ and we show in Section \ref{sec:proof of orbifold homology} that the same extends to arbitrary values of parameters $(k,B)$ and $a < \frac{(k+1)B -1}{k-1}$ the following statement.

\begin{thm}\label{thm: orbifold homology}
There exists an integrable almost complex structure $J_M$ on $M$ such that with respect to the complex structure $J = Sym^k(J_M)$ on $X = Sym^k(M),$ the Lagrangian  $\cl{L}_{k,B} \subset X$  has a well-defined Fukaya algebra in the sense of \cite{FO3:book-vol12,FO3-qm,ChoPoddar}. Moreover, this Fukaya algebra admits gapped orbifold bulk deformation $\mathbf{b}$ and weak bounding cochain $b$ with the bulk-deformed Floer homology \[HF( (\cl{L}_{k,B},\mathbf{b},b), (\cl{L}_{k,B},\mathbf{b},b)) \cong HF( \cl{L}_{k,B}, \bfb )\] of $(\cl{L}_{k,B},\mathbf{b},b)$ with itself well-defined and isomorphic to $H^*(\cl{L}_{k,B};\Lambda)$ with coefficients in the Novikov field $\Lambda = \Lambda_{\Gamma},$ where $\Gamma = \Gamma(L,\bf b, b).$ Moreover, for $B \in (1/(k+1),1/2),$ $a \in (0,\frac{(k+1)B -1}{k-1})$ rational, $\bfb$ can be chosen in such a way that $\Gamma$ is rational. Furthermore, $HF(\cl{L}_{k,B},\bfb)$ is an associative unital algebra, and we denote its unit by $1_{\cl{L}_{k,B}}.$
\end{thm}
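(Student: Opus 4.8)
The plan is to extend to all $k$ the holomorphic disk count that Mak and Smith carried out for $k=2$ in \cite{MakSmith-links}. First I would fix the split integrable complex structure $J_M = j_{S^2}\oplus j_{S^2(2a)}$ on $M = S^2\times S^2(2a)$; being $\Sym_k$-invariant, it descends to an integrable complex structure $J = Sym^k(J_M)$ on $X$. Since $\cl{L}'_{k,B}=L^0_{k,B}\times\cdots\times L^{k-1}_{k,B}$ is a product of the spin Lagrangian tori $L^j_{k,B}=L^{0,j}_{k,B}\times S\cong T^2$, and $\Sym_k$ acts freely on a neighbourhood of it in the regular locus and injectively on it, $\cl{L}_{k,B}\cong T^{2k}$ is a spin Lagrangian torus in $X^{\mathrm{reg}}$, so its Fukaya algebra in the sense of \cite{FO3:book-vol12,FO3-qm,ChoPoddar} is available. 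Write $x_0,\dots,x_{k-1}$ for the exponentiated holonomies along the circles in the $S^2$-direction and $y_0,\dots,y_{k-1}$ for those in the $S^2(2a)$-direction. A $J$-holomorphic disk with boundary on $\cl{L}_{k,B}$ lifts to $M^k$ and either misses the branch divisor — in which case it is, up to reparametrisation, a product of $J_M$-holomorphic disks in the factors with boundary on the $L^j_{k,B}$ — or it carries a $\Z/2$ orbi-point mapping into the branch divisor. The smooth Maslov-$2$ disks bounded in one factor $M$ by $L^j_{k,B}$ are the two $S^2$-caps, of areas $B+jC$ and $B+(k-1-j)C$, and the two $S^2(2a)$-caps, each of area $a$; the orbifold Maslov-$2$ disks, new relative to $k=1$, connect consecutive strata $L^{0,j}_{k,B}$ and $L^{0,j+1}_{k,B}$ across the height gap $C$, and I would classify their relative homotopy classes, Maslov indices and (rational) areas by lifting across the $\Z/2$-branch point as in \cite{MakSmith-links}. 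The hypothesis $a < B-C = \tfrac{(k+1)B-1}{k-1}$ ensures these areas are positive and that the orbifold contributions sit at the right filtration level relative to the smooth ones.

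Next I would assemble the superpotential. After deforming by a gapped orbifold bulk class $\mathbf{b}\in H_*(IX;\Lambda_+)$ supported on the twisted sectors of $X$ indexed by transpositions — with formal coefficients $\mathfrak{b}_0,\dots,\mathfrak{b}_{k-2}$ turning on the orbifold disks between consecutive strata — the potential $W^{\mathbf{b}}$ on $H^1(\cl{L}_{k,B};\Lambda_0)/H^1(\cl{L}_{k,B};2\pi\sqrt{-1}\Z)\cong(\Lambda_0\setminus\Lambda_+)^{2k}$ is a sum of $4k$ smooth monomials, one per smooth Maslov-$2$ family with its area weight and holonomy factor, plus orbifold monomials each mixing $x_j$ with $x_{j+1}$ and weighted by $\mathfrak{b}_j$ and the corresponding orbifold area. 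The $y$-variables decouple to leading order, forcing $y_j = \pm 1$ modulo $\Lambda_+$, while the remaining system $\partial_{x_j}W^{\mathbf{b}} = 0$ is tridiagonal — $\partial_{x_j}W^{\mathbf{b}}$ involves only $x_{j-1},x_j,x_{j+1}$ — and, after matching valuations by a suitable choice of the $\mathfrak{b}_j$, its leading part is governed by the $A_k$ Cartan matrix; positive-definiteness of the latter yields a unique leading-order solution, which I would promote to a genuine (nondegenerate) critical point $b$ by the standard successive approximation argument over the Novikov ring. One also verifies, exactly as for toric fibres \cite{FO3-toric-I} and as in \cite{MakSmith-links}, that every element of $H^1(\cl{L}_{k,B};\Lambda_0)/H^1(\cl{L}_{k,B};2\pi\sqrt{-1}\Z)$ is a weak bounding cochain.

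Granting the critical point $b$, Theorem \ref{thm: crit W} applies directly: $m^{\mathbf{b},b}_1 = 0$, hence $HF((\cl{L}_{k,B},\mathbf{b},b),(\cl{L}_{k,B},\mathbf{b},b))\cong H^*(\cl{L}_{k,B};\Lambda)$ and $\cl{L}_{k,B}$ is Hamiltonianly non-displaceable. For the rationality assertion, with $B$ and $a$ (hence $C$) rational every disk area appearing above lies in $\Q$, so all bulk and bounding-cochain parameters may be taken in $\Lambda_\Gamma$ for the subgroup $\Gamma = \Gamma(L,\mathbf{b},b)\subset\Q$ generated by these areas, which is then a discrete, rational subgroup of $\R$ — this is exactly where rationality of $B,a$ is used. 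Finally, the associative unital algebra structure on $HF(\cl{L}_{k,B},\mathbf{b})$, with unit $1_{\cl{L}_{k,B}}$, is the $m^{\mathbf{b},b}_2$-product and is a formal consequence of the curved filtered $A_\infty$ relations with vanishing $m^{\mathbf{b},b}_1$, just as in the smooth case.

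The main obstacle I anticipate is the geometric step: classifying the orbifold Maslov-$2$ disks with boundary on $\cl{L}_{k,B}$ and establishing regularity of the corresponding moduli spaces for $J = Sym^k(J_M)$ — or, where regularity fails, choosing $\Sym_k$-compatible abstract perturbations that introduce no spurious low-area classes. This is where the $k=2$ computation of \cite{MakSmith-links} must genuinely be generalised rather than quoted, and where the inequality $a < B-C$ and the $A_k$ Cartan-matrix structure do the real work; once the leading-order potential is pinned down, the remaining Floer-theoretic conclusions are essentially formal.
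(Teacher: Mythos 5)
Your proposal follows the paper's overall strategy closely (tautological correspondence to curves in $M$ with boundary on $\mathbb{L}'$, smooth versus orbifold Maslov-$2$ disks, superpotential whose critical-point equation reduces to the $A_k$ Cartan matrix at leading order), but there are two points worth flagging, one of which is a genuine gap that you yourself identify without resolving.

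The real crux is the step you call the ``main obstacle'': ruling out Maslov-$0$ contributions. For the split integrable $J_M = j_{S^2}\oplus j_{S^2(2a)}$ with boundary on the unperturbed product Lagrangian $\mathbb{L}'_{k,B}$, Maslov-$0$ holomorphic annuli with boundary on two of the circles \emph{do} exist in general (and hence tautological orbifold disks with $\mu=0$), since the smooth level circles at the prescribed heights and the equator $S$ are not in generic position. The paper handles this by the Fukaya trick: it perturbs $L^{0,j}_{k,B}$ and $S$ to round circles $R_i = \{|z|=r_i\}$, $S_i = \{|w|=\rho_i\}$ in $\C P^1 \times \C P^1$ satisfying a \emph{non-resonance condition} ($r_i/r_j < \rho_i/\rho_j$ for $i<j$ and no nonzero integer vectors $\alpha,\beta$ with $\sum a_i r_i = \sum b_i\rho_i$), then invokes a period-matrix argument (via harmonic measures, following Ahlfors) to show that any purported Maslov-$0$ curve would force an integer relation contradicting non-resonance (Lemma~\ref{lma: no Maslov 0}). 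Without this step, Lemma~\ref{lma: proportional to 1} does not apply and the potential $W^{\mathbf{b}}$ is not even defined as a convergent sum over weak bounding cochains. This is not a regularity/virtual-perturbation issue that can be outsourced; it is a concrete geometric lemma about the chosen $J_M$ and the (perturbed) Lagrangian that must be proved.

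A smaller issue is your parametrisation of the bulk. You suggest $k-1$ ``formal coefficients $\mathfrak{b}_0,\dots,\mathfrak{b}_{k-2}$ turning on the orbifold disks between consecutive strata,'' but transpositions form a \emph{single} conjugacy class $\gamma\subset\Sym_k$, so the corresponding twisted sector $X_\gamma\cong M\times\Sym^{k-2}M$ is connected and the fundamental-class bulk contributes only one scalar $\beta_{\mathrm{orb}}\in\Lambda_+$. The paper uses $\mathbf{b} = \beta_{\mathrm{orb}}[X_\gamma] + \beta D_1$ with the single orbifold coefficient tuned by the valuation-matching equation $\tfrac{\beta_{\mathrm{orb}}^2}{2}T^{C+a} = T^B$; one cannot independently weight the orbifold disks joining different consecutive pairs via the fundamental class alone. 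As it happens this does not wreck the scheme — with one parameter and the correct signs $\varepsilon_i$ the leading-order system $AP = C$ for the $A_k$ Cartan matrix $A$ still has a unique solution since $A$ is invertible — but you should be aware that the linear algebra works with only one free orbifold coefficient, not $k-1$.

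Also note the minimal orbifold-disk area in the first-factor direction is $C+a$, not $C$; this is what enters the tuning of $\beta_{\mathrm{orb}}$ and is where the inequality $a < B - C$ is used to ensure the orbifold terms do not drop below the $T^B$ filtration level of the smooth $p_i$-terms.
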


The proof of this theorem appears in Section \ref{sec:proof of orbifold homology} below. It implies in particular that the unit $1_{\cl{L}_{k,B}} \in HF(\cl{L}_{k,B},\mathbf{b},b)$ does not vanish. This enables us to use spectral invariants associated to this element. For $H\in \sm{[0,1] \times X, \R}$ we set \[ \sigma_{k,B}(H) = c(\cl{L}_{k,B},\bfb; 1_{\cl{L}_{k,B}}, H).\] Then $\sigma_{k,B}$ satisfies the properties of Lagrangian spectral invariants, including rational spectrality, from Section \ref{subsec:Lagrangian spectral}:

\begin{enumerate}
\item {\em spectrality:} $\Gamma(H,\bfb)$ being rational, the condition \[\sigma_{k,B}(H) \in \spec(\LH,\bfb)\] holds for {\em each} Hamiltonian $H.$
\item {\em Hofer-Lipschitz:} \[|\sigma_{k,B}(F) - \sigma_{k,B}(G)| \leq \max\{\cE_{+}(F-G), \cE_{-}(F-G)\}.\]

	\item {\em normalization:} For each $H \in \sm{[0,1] \times M, \R}$ and $b\in \sm{[0,1],\R},$ \[\sigma_{k,B}(H+b) = \sigma_{k,B}(H) + \int_{0}^{1} b(t)\,dt.\]

\item {\em monotonicity:} if two Hamiltonians $F,G$ satisfy $F_t \leq G_t$ for all $t \in [0,1],$ then $\sigma_{k,B}(F) \leq \sigma_{k,B}(G).$

\item {\em Lagrangian control:} $\Gamma(H,\bfb)$ being rational, if $(H_t)|_{\cl{L}_{k,B}} = c(t) \in \mathbb{R}$ for all $t \in [0,1],$ then we have \[\sigma_{k,B}(H)=\int_0^1 c(t) \,dt,\] hence for each Hamiltonian $H,$ $\;\int_0^1 \min_{\cl{L}_{k,B}} H_t \,dt \leq \sigma_{k,B}(H) \leq \int_0^1 \max_{\cl{L}_{k,B}} H_t \,dt.$

\item {\em homotopy invariance:} for $H_t$ of mean-zero for all $t \in [0,1],$ $\sigma_{k,B}(H)$ depends only on the class ${\phi}_H \in \til{\Ham}(X,\om)$ of the Hamiltonian isotopy $\{\phi^t_H\}_{t \in [0,1]},$
\item {\em triangle inequality:} for each two Hamiltonians $F,G,$ \[\sigma_{k,B}(F\#G) \leq \sigma_{k,B}(F) + \sigma_{k,B}(G).\] \end{enumerate}

\subsubsection{Proof of Theorems \ref{thm: main spec k} and \ref{thm: main k}}\label{subsubsec: proof A}

We now prove Theorems \ref{thm: main spec k} and \ref{thm: main k}. For a Hamiltonian $F \in \sm{M_a,\R}$ let $H \in \sm{X,\R}$ be the Hamiltonian on $X$ determined by the $\Sym_k$-invariant Hamiltonian $\til{H} = F \oplus \ldots \oplus F$ in $\sm{[0,1] \times M^k, \R},$ that is $\til{H}(t,x_1,\ldots, x_k) = F(t,x_1) + \ldots + F(t,x_k).$ Observe that if $F$ is mean-zero, then so is $H.$ Furthermore, if $F_1 \mapsto H_1,$ $F_2 \mapsto H_2,$ then $F_1 \# F_2 \mapsto H_1 \# H_2$ and the map $F \mapsto H$ induces a map $\til{\Ham}(M_a) \to \til{\Ham}(X).$

We set \[ c_{k,B}(F) = \frac{1}{k} \sigma_{k,B}(H),\] \[\mu_{k,B}(F) = \lim_{m\to \infty} \frac{1}{m} c_{k,B}(F^{\#m}).\]  The limit exists by Fekete's lemma by the subadditivity property of $\sigma_{k,B}.$ The Hofer-Lipschitz property of $c_{k,B}$ and $\mu_{k,B}$ follows from that of $\sigma_{k,B}$ by the subadditivity of Hofer's energy functional. The monotonicity, normalization, Lagrangian control, and independence of Hamiltonian properties are immediate consequences of those for $\sigma_{k,B}.$ Note the factor $1/k$ in the definition of $c_{k,B}:$ it serves for instance to obtain the Hofer-Lipschitz property with coefficient $1,$ as $\cl{E}_+(H) \leq k \cl{E}_+(F)$ for $F \mapsto H.$ Similarly, if $F_t|_{L^j_{k,B}} \equiv c_j(t),$ then $H_t|_{\cl{L}_{k,B}} \equiv \sum_{0 \leq j < k} c_j(t).$

Subadditivity for $c_{k,B}$ and commutative subadditivity for $\mu_{k,B}$ are direct consequences of the subadditivity of $\sigma_{k,B}.$ Positive homogeneity of $\mu_{k,B}$ is immediate from the definition of $\mu_{k,B}.$

It remains to prove the conjugation invariance of $\mu_{k,B},$ controlled additivity of $c_{k,B}$ and the Calabi properties of $c_{k,B}$ and $\mu_{k,B}.$

To prove conjugation invariance of $\mu_{k,B}$, we note that by the subadditivity of $\sigma_{k,B}$ and hence of $c_{k,B},$ we have for all $m \in \Z_{>0},$ \begin{equation}\label{ineq: conjugation} c_{k,B}(\phi^m) - q_{k,B}(\psi) \leq c_{k,B}(\psi\phi^m\psi^{-1}) \leq c_{k,B}(\phi^m) + q_{k,B}(\psi),\end{equation} where $q_{k,B}(\psi) = c_{k,B}(\psi) + c_{k,B}(\psi^{-1}).$ Hence dividing by $m$ and taking the limit as $m \to \infty,$ we obtain \[ \mu_{k,B}(\phi) = \mu_{k,B}(\psi\phi\psi^{-1})\] as required.

Now we prove the Calabi property. The proofs for $c_{k,B}$ and $\mu_{k,B}$ are identical, so we focus on $\mu_{k,B}$ for example. Suppose that $U \subset M_a$ is disjoint from $\mathbb{L}_{k,B}.$ Given a Hamiltonian $H \in \sm{[0,1] \times M,\R}$ supported in $[0,1] \times U,$ consider the Hamiltonian \begin{equation}\label{eq: extension by zero} G(t,x) = H(t,x) + b(t)\end{equation} such that for all $t \in [0,1],$ \[ b(t) = - \frac{1}{\vol(M_a)} \int_U H(t,x)\, \om^2.\] In that case $\mu_{k,B}(H) = 0.$ Hence in view of the normalization property of $\mu_{k,B}$ we obtain \[ \mu_{k,B}(G) = - \int_{0}^1 b(t)\,dt = - \frac{1}{\vol(M_a)} \int_{0}^1 \int_U H(t,x)\, \om^2\,dt = - \frac{1}{\vol(M_a)} \Cal_U(\{\phi^t_H\}).\] 

Finally, to prove controlled additivity it suffices to observe that for $H$ supported in $[0,1] \times U$ and each Hamiltonian $F \in \sm{[0,1] \times M_a, \R},$ there is a bijective correspondence between the generators of the Lagrangian Floer complex of $F \# H$ and that of $F,$ and furthermore \[\spec(\cl{L}_{k,B},\bfb;1_{\cl{L}_{k,b}},\til{F\#H}) = \spec(\cl{L}_{k,B},\bfb;1_{\cl{L}_{k,b}},\til{F}).\] The proof now proceeds by the spectrality and Hofer-Lipschitz axioms for the family $\til{F \# sH}$ of Hamiltonians where $s \in [0,1]$, as that of the Lagrangian control property. We obtain that $c_{k,B}(F \# sH)$ is constant in $s \in [0,1]$ and hence $c_{k,B}(F\#H) = c_{k,B}(F).$ The proof is now concluded by the normalization axiom to pass to the mean-zero Hamiltonian $G$ as in \eqref{eq: extension by zero} instead of $H.$

\qed


\subsubsection{Proof of Theorem \ref{thm: orbifold homology}}\label{sec:proof of orbifold homology}

We explain how the Floer-theoretical approach of Mak-Smith \cite[Remark 1.10]{MakSmith-links} applies to multiple level sets of the moment map $z:S^2 \to [-1/2,1/2].$ This allows us to prove Theorem \ref{thm: orbifold homology}. 

Consider our Lagrangian configuration $\mathbb{L}^0_{k,B} \subset S^2$ of $k$ Lagrangians on $S^2.$ For simplicity let us shorten the notation: $L^{0,i}_{k,B} = L_i,$ $\mathbb{L}^0_{k,B} = \mathbb{L},$ $i \in I = \{ 0,\ldots, k-1 \}.$

%

Now considering the manifolds $L'_i = L_i \times S$ in $M = M_a = S^2 \times S^2(2a)$ for $0< a < B-C,$ where $S\subset S^2(2a)$ is the equator, we obtain a Lagrangian submanifold $\cl{L}'_{k,B}$ of $\til{M} = M^k$ and a Lagrangian submanifold $\cl{L}_{k,B}$ of $X = \Sym^k(M).$ Recall that $X$ is the global quotient orbifold $X = \til{M}/\Sym_k$ where $\Sym_k$ denotes the symmetric group on $k$ elements. Set also $\mathbb{L}' = \sqcup_{i\in I} L'_i.$

We consider the orbifold Lagrangian Floer homology of $\cl{L}_{k,B}$ in $X$ with bulk deformation and bounding cochain $(\bfb)$ of a special form. As in \cite{MakSmith-links}, the bulk deformation will take the form \[\bf b = \beta_{orb} [X_{\gamma}] + \bf b_{smooth}\] for $\bf b_{smooth} \in H_{4k-2}(X; \Lambda_{+}),$ $\beta_{orb} \in \Lambda_{+},$ $\gamma$ a conjugacy class in $G = \Sym_k,$ and $X_{\gamma} \cong \til{M}^g/C(g),$ for any $g \in \gamma$ and $C(g)$ its centralizer, the connected component of the inertia orbifold $IX$ of $X$ corresponding to $\gamma.$ Of course $[X_{\gamma}]$ is the fundamental class of $X_{\gamma}.$

We make the following choice of $\gamma$ which is important for our particular situation: $\gamma$ is the conjugacy class of transpositions in $G= \Sym_k,$ that is, the permutations of type $(1)^{k-2}(2).$ Note that each element of $\gamma$ is of order $2.$ It is also easy to calculate that $X_{\gamma} \cong M \times \Sym^{k-2} M.$


Recall, following \cite{MakSmith-links,OszvathSzabo-symm} that a map $u:S \to (X,\cl{L}_{k,B}),$ for a stable disk $S,$ corresponds ``tautologically" to a map $\nu: \Sigma \to (M,\mathbb{L}_{k,B}),$ where $\pi_{\Sigma}:\Sigma \to S$ is a degree $k$ branched cover: $u$ is recovered from $\nu$ by $u(z) = \nu(\pi_{\Sigma}^{-1}(z)) \in X = Sym^k(M).$ Vice versa, $\nu$ is obtained as follows: first consider the fiber product of $u$ and the projection $p: M^k \to X.$ This yields a $\Sym_k$-equivariant branched cover $\til{\Sigma} \to S$ and a $\Sym_k$-equivariant holomorphic map $V: \til{\Sigma} \to M^k.$ Consider $V_1 = \pi_1 \circ V: \til{\Sigma} \to M,$ where $\pi_1: M^k \to M$ is the projection to the first coordinate. This map is invariant under the action of the stabilizer $G_1 \cong \Sym_{k-1}$ of $1 \in \{1,\ldots,k\}$ under the action of $\Sym_k.$ We then define $\pi_{\Sigma}:\Sigma \to S$ to be the quotient $\til{\Sigma}/G_1$ by the action of $G_1$ with the induced projection to $S$ and the induced map $V:\Sigma \to M.$ It is then easy to see that the boundary components correspond. Furthermore the $\om_X$-area of $u$ coincides with the $\om_M$-area of $\nu.$ In particular, following the argument of \cite[Lemma 3.2]{MakSmith-links}, one can show the following.

\begin{lma}\label{lma: connected taut}
Let $\nu:(\Sigma, \del \Sigma) \to (M, \mathbb{L}')$ be obtained by tautological correspondence from a stable disk $u:(S,\del S) \to (X,\cl{L}_{k,B}).$ Then $\del \Sigma$ has $k$ connected components. If $\Sigma$ has $k$ connected components, then it consists of $k$ disks. If $\Sigma$ has $k-1$ connected components, then it consists of $k-2$ disks and one curve with $2$ boundary components.
\end{lma}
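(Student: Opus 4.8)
The plan is to read off everything from the degree-$k$ branched covering $\pi_{\Sigma}\colon\Sigma\to S$ underlying the tautological correspondence, following the scheme of \cite[Lemma 3.1]{MakSmith-links}; I would treat the principal case in which $S$ is a smooth disk, the general stable (orbifold) domain being handled componentwise after normalization. The first step is to analyze $\pi_{\Sigma}$ over $\partial S$. Since $\cl{L}_{k,B}$ is the image in $X=\Sym^{k}(M)$ of the product $L'_{0}\times\cdots\times L'_{k-1}$, and the circles $L_{i}=L^{0,i}_{k,B}$ sit at pairwise distinct levels of the moment map $z$, the tori $L'_{i}=L_{i}\times S$ are pairwise disjoint; hence for every $w\in\partial S$ the unordered $k$-tuple $u(w)\in\cl{L}_{k,B}$ consists of $k$ distinct points, exactly one on each $L'_{i}$. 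Consequently $\pi_{\Sigma}$ is unbranched over $\partial S$, and the tautological labelling of its $k$ sheets over $\partial S$ by the index $i$ is locally — and, by the disjointness of the $L'_{i}$, globally — constant. So the monodromy of $\pi_{\Sigma}|_{\partial\Sigma}\colon\partial\Sigma\to\partial S$ is trivial: $\partial\Sigma$ is a disjoint union of $k$ circles, each mapped homeomorphically onto $\partial S$, and $\nu$ sends the $i$-th of them into $L'_{i}$. This proves the first assertion.

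For the remaining assertions I would use that $\pi_{\Sigma}$ is proper and open, so the image of any connected component $\Sigma_{j}$ of $\Sigma$ is open and closed in the connected surface $S$, hence equals $S$; in particular $\Sigma_{j}$ meets $\partial\Sigma$, and the number $b_{j}$ of its boundary circles is at least $1$. Counting preimages of a generic boundary point and using triviality of the boundary monodromy, the covering degree of $\pi_{\Sigma}|_{\Sigma_{j}}$ equals $b_{j}$, while $\sum_{j}b_{j}=k$ by the first assertion. If $\Sigma$ has exactly $k$ components then every $b_{j}=1$, so every $\Sigma_{j}$ is a degree-one branched cover of a disk, i.e. a disk; this is the second assertion. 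If $\Sigma$ has exactly $k-1$ components the same count forces $k-2$ of them to carry one boundary circle — hence to be disks as before — and exactly one component $\Sigma_{0}$ to carry two boundary circles, whose two labels $i\neq j$ are precisely the indices missing from the disk components. Then $\pi_{\Sigma}|_{\Sigma_{0}}\colon\Sigma_{0}\to S$ has degree $2$, with deck involution $\tau$ exchanging the two boundary circles, and Riemann--Hurwitz gives $\chi(\Sigma_{0})=2\chi(S)-r_{0}=2-r_{0}$, where $r_{0}$ is the number of interior branch points counted with multiplicity; since $\Sigma_{0}$ is connected with two boundary circles, $\chi(\Sigma_{0})=-2g_{0}$, whence $r_{0}=2+2g_{0}$.

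It thus remains to prove $g_{0}=0$, i.e. that $\Sigma_{0}$ is an annulus carrying exactly two simple branch points (an odd number being ruled out by triviality of the boundary monodromy). This is the one point at which the combinatorics of the branched cover is insufficient — a higher-genus connected double branched cover of the disk does exist — and where the holomorphicity of $u$, equivalently of $\nu|_{\Sigma_{0}}$, has to enter; I expect this to be the main obstacle. Following \cite[Lemma 3.1]{MakSmith-links}, the mechanism is to form the holomorphic map $\widetilde{\nu}_{0}=(\nu|_{\Sigma_{0}},\nu\circ\tau|_{\Sigma_{0}})\colon\Sigma_{0}\to M\times M$, whose branch points over $S$ are governed by its intersections with the diagonal $\Delta_{M}\subset M\times M$, and to bound those intersections by positivity of intersections of the holomorphic $\widetilde{\nu}_{0}$ with the complex submanifold $\Delta_{M}$, using the product structure $M=S^{2}\times S^{2}(2a)$, the integrable $J_{M}$, and the explicit latitude--equator boundary conditions $\nu(\partial\Sigma_{0})\subset L'_{i}\sqcup L'_{j}$; this pins $r_{0}$ down to $2$, so that $g_{0}=0$. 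The remaining bookkeeping — Riemann--Hurwitz and the covering-space combinatorics — is elementary, and the whole argument runs parallel to the case $k=2$ treated in \cite{MakSmith-links}.
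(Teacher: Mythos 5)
Your topological analysis is correct and matches what the paper (by reference to \cite[Lemma 3.1]{MakSmith-links}) is invoking: disjointness of the tori $L'_i$ forces trivial monodromy of $\pi_\Sigma$ over $\partial S$, so $\partial\Sigma$ has $k$ circles with locally constant labels; properness makes every component of $\Sigma$ a full branched cover of $S$ of degree equal to its number of boundary circles; and the degree/component count forces $k$ disks in the $k$-component case and $k-2$ disks plus one degree-two component $\Sigma_0$ with two boundary circles in the $(k-1)$-component case, with $r_0=2+2g_0$ by Riemann--Hurwitz. You are also right that the remaining claim $g_0=0$ is the only non-combinatorial step and is where holomorphicity must enter, and forming $\widetilde\nu_0=(\nu|_{\Sigma_0},\nu\circ\tau|_{\Sigma_0})$ and looking at its intersections with $\Delta_M$ is the right object to consider.

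The gap is in the sentence claiming that ``positivity of intersections \dots pins $r_0$ down to $2$.'' Positivity of intersections gives only that each branch point contributes positively to the intersection number $\widetilde\nu_0\cdot\Delta_M$; it is a \emph{lower} bound $\widetilde\nu_0\cdot\Delta_M\ge r_0$, not an upper bound on $r_0$. To conclude $r_0=2$ you need to know that $\widetilde\nu_0\cdot\Delta_M=2$, and that is a statement about the relative homology class of $\widetilde\nu_0$, which is \emph{not} fixed by the boundary conditions alone (a connected degree-two cover of the disk with two boundary circles, trivial boundary monodromy and $2+2g_0$ simple branch points is perfectly possible holomorphically for any $g_0\ge 0$). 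In the tautological correspondence the branch points of $\pi_\Sigma$ coincide with the orbifold (twisted) marked points of the stable map $u$, and the constraint on their number is extracted not from positivity of intersections but from the Maslov-index/dimension count together with the energy filtration (this is exactly what the paper does afterwards in the ``Dimension counting'' paragraph and the subsequent area estimates that single out the annulus contribution). Your write-up leaves this, the one genuinely substantive step, as an unsubstantiated assertion; you should either carry out the Maslov/energy bound on $\ell_2$ for the curves actually entering $\sum_k m_k^{\mathbf b,b_0}(b_+\otimes\cdots\otimes b_+)$, or explicitly state and use whatever constraint on the number of orbifold marked points is in force.
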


{We remark that of course $\Sigma$ could have less than $k-1$ connected components. However, we shall not require knowledge of the topology of $\Sigma$ in this case. Similarly, the curve with two boundary components obtained in the second case of Lemma \ref{lma: connected taut} might or might not be an annulus. However, only the annuli will contribute to the lowest order term of the superpotential.} For more details about the tautological correspondence we refer to \cite{MakSmith-links,OszvathSzabo-symm}.

Following the dimension calculations in \cite[Section 3.1]{MakSmith-links} we obtain the following analogue of \cite[Lemma 3.12]{MakSmith-links}.

\begin{lma}\label{lma: proportional to 1}
	{Suppose that for each $b=b_0+b_+ \in H^1(\cL_{k,B};\Lambda_0),$ each non-constant $J_{X}$-holomorphic orbifold stable map $u$ to $(X,\cL_{k,B})$ that contributes to the sum \begin{equation}\label{eq: sum in proportional to 1} \sum_k m_k^{\bf b, b_0}(b_+ \otimes \ldots \otimes b_+),\end{equation} tautologically corresponds to a $J_M$-holomorphic curve $\nu:(\Sigma,\del \Sigma) \to (M, \bb L')$ with non-zero Maslov index $\mu(\nu)$. Then \[H^1(\cL_{k,B};\Lambda_0)/H^1(\cL_{k,B};2\pi \sqrt{-1} \Z)\] consists of weak bounding cochains. Moreover, for each such stable map $u,$ the corresponding $\nu$ satisfies $\mu(\nu) = 2.$}
\end{lma}

 \medskip
 \noindent
 {\sc Dimension counting.} For the reader's convenience, we briefly outline dimension
 counting and thus outline the proof of Lemma \ref{lma: proportional to 1}. It would be convenient to analyze a slightly more general case
 when we have a pseudo-holomorphic map $\nu: (\Sigma, \partial \Sigma) \to (N,Q)$,
 where $Q = Q_1 \sqcup \ldots \sqcup Q_k$ is an $n$-dimensional Lagrangian submanifold of a symplectic manifold $N$.
 As it was explained above, this curve together with a $k$-fold branched cover $\Sigma \to S$,
 where $S$ is a disc,  corresponds to an orbifold disc $u:(S, \partial S) \to (X, \cl K)$ in the $k$-fold symmetric product with the boundary on $\mathcal{K}:= (Q_1 \times \ldots \times Q_k)/\text{Sym}_k$. (In our particular application, $N = M_a,$ $Q = \bb L_{k,B}$ and $\mathcal{K} = \cl L_{k,B}.$)

Here $\Sigma$ is a Riemann surface with $k$ boundary components obtained as a degree $k$ branched cover of the disc $S$ with $\ell_2$ branch points of order $2$.
Thus the total space of $\Sigma$'s, automorphisms taken into account, has dimension $2\ell_2 - 3$, where we used that $\dim PSL(2,\mathbb{R})=3$. Let us emphasize that these $\ell_2$ points automatically map by $u$ to our component $X_{\gamma}$ of the inertia orbifold, constrained by orbifold bulk of codimension $0$.

The space of parameterized holomorphic maps $\nu$ of $\Sigma$
with $n$-dimensional Lagrangian boundary has dimension  $n\chi(\Sigma) + \mu$, where $\mu$ stands for the Maslov index of $\nu$. 

Additionally, $S$ is equipped with $r$ boundary marked points which go to the bounding cochain in $\cl K$ of codimension $1$, and with $\ell_1$ interior marked points which go to the smooth bulk of codimension $2$ in $X.$ Finally, we have an extra output boundary point, $w \in \partial \Sigma$.

Combining this, we get that the virtual dimension  of moduli space $\mathcal{M}$ of such curves $u$ equals
 $$\dim \mathcal{M} = n\chi(\Sigma) + \mu + (r+1 + 2\ell_1) + (2\ell_2-3) - r - 2\ell_1\;,$$
where $\chi$ stands for the Euler characteristic. Furthermore, by the Riemann-Hurwitz formula, $\chi(\Sigma) = k -\ell_2$. Finally, $\dim \mathcal{K} = nk$. It follows that
 $$
 \dim \mathcal{M} = \dim \mathcal{K} + (2-n)(k-\chi(\Sigma)) + \mu-2\;.
 $$

Let us compute the Maslov index of curves $\nu$ corresponding to $u$ which contribute to the coefficient of the unit $1_{\cl K} \in H^*(\cl K; \Lambda_0)$ in the sum as in \eqref{eq: sum in proportional to 1}. We need the evaluation map $u \mapsto u(w)$ from $\mathcal{M}$ to $\mathcal{K}$ to have non-zero degree, and in particular $\dim \mathcal{M}  = \dim \mathcal{K}$. Thus we get
\begin{equation} \label{eq-dim-1}
 (2-n)(k-\chi(\Sigma)) + \mu =2\;.
\end{equation}

This finishes the general discussion of the dimension count. Let us now apply it in two special cases relevant to us. In both cases, we use a toric almost complex structure and \cite[Lemma 3.24]{MakSmith-links} in order to show that the Maslov index is non-negative for the holomorphic curves in question.

When $N=S^2$, i.e., when we work directly on $S^2$ without any stabilization, we have $n=1$ so equation \eqref{eq-dim-1} holds\footnote{We thank Cheuk Yu Mak for this clarification} for an annulus $\Sigma$ with Maslov $0.$ In fact, writing for a general curve $u$ contributing to the sum that $\ell_2 = k - \chi(\Sigma),$ and noting that $\mu$ is non-negative and even in our situation, we see that the only two possibilities are $\ell_2 = 0$ and $\mu = 2,$ which corresponds to a smooth disk, and $\ell_2 = 2$ and $\mu = 0,$ which corresponds to an annulus.

When $N= S^2 \times S^2(2a)$, we have that $n=2$ and the term involving $\Sigma$ in formula \eqref{eq-dim-1} vanishes, giving $\mu=2$. In fact, using the more general fact that if $u$ contributes to the sum, we must have $\mu = (2-n)(k-\chi(\Sigma)) + \mu \leq 2,$ and $\mu$ being non-negative and even, implies that $\mu = 2$ or $\mu = 0.$ If the last option is impossible, we obtain $\mu = 2.$

\medskip

\color{black}


\blue{Furthermore, one can prove following \cite[Lemma 3.25]{MakSmith-links} that there exists an almost complex structure $J_M$ on $M$ such that the Maslov non-zero condition in Lemma \ref{lma: proportional to 1} is satisfied. We now outline how this is achieved. 

When setting the Lagrangian Floer theory with bulk, and calculating the superpotential, we perform the Fukaya trick to obtain the following situation. From the point of view of complex structures one counts holomorphic curves with boundaries on $\mathbb{K} = \sqcup_i K_i$ with $K_i = R_i \times S_i$ instead of $L'_i = L_i \times S$ where $R_i$ and $S_i$ are small perturbations of $L_i$ and $S$ of the form $z^{-1}(\gamma_i)$ and $z^{-1}(\delta_i)$ for $\delta_i \in \R$ small with $\gamma_i<\gamma_j, \delta_i < \delta_j$ whenever $i<j$. From the point of view of symplectic areas of these holomorphic curves, they are still counted with respect to the $L'_i.$ Indeed, the correspondence between curves with boundary on $\bb L'$ and those with boundary on $\bb K$ is carried out by a suitable $C^1$-small diffeomorphism of $M_a$ carrying $L'_i$ to $K_i$ for each $i.$}

Identifying $S^2 \times S^2$ with $\C P^1 \times \C P^1,$ $\C P^1$ considered as the Riemann sphere $\mathbb{C} \cup \{\infty\}$, let $R_i$ be given as $\{|z| = r_i\}$ and $S_i = \{|w| = \rho_i \}.$ We require that the following {\em non-resonance condition} is satisfied: for all $0 \leq i<j \leq k-1,$ $r_{i}/r_{j} < \rho_{i}/\rho_{j}$ and there are no non-zero integer vectors $\xi = (a_1,\ldots, a_k),\eta = (b_1, \ldots, b_k) \in \Z^k$ such that $\sum a_i \log(r_i) = \sum b_i \log(\rho_i).$ In this situation, by following \cite[Chapter 6, Section 5]{Ahlfors} and \cite{MakSmith-links} it is easy to show the following.

\begin{lma}\label{lma: no Maslov 0}
The non-constant curves $\nu:\Sigma \to M$ from Lemma \ref{lma: connected taut} have non-zero Maslov index: $\mu(\nu) \neq 0.$ The same holds for the restriction of $\nu$ to any connected component of $\Sigma$ with at least two boundary components.
\end{lma}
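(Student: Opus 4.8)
The plan is to compute the Maslov index one $\C P^1$-factor at a time and then invoke the non-resonance condition to exclude the single configuration that could make it vanish. Write $M = \C P^1 \times \C P^1$ in the coordinates $(z,w)$ fixed above, let $p_1, p_2 \colon M \to \C P^1$ be the two projections, and recall that each component of $\mathbb{K} = \bigsqcup_i K_i$ is a product $K_i = R_i \times S_i$ with $R_i = \{|z| = r_i\}$ and $S_i = \{|w| = \rho_i\}$. For a holomorphic $\nu \colon (\Sigma, \partial \Sigma) \to (M, \mathbb{K})$ the Maslov index is additive over the two factors, $\mu(\nu) = \mu(p_1 \circ \nu) + \mu(p_2 \circ \nu)$, where $p_1 \circ \nu \colon (\Sigma,\partial\Sigma) \to (\C P^1, \bigsqcup_i R_i)$ and likewise for $p_2 \circ \nu$.

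First I would record the positivity input in each factor. The circle $\{|z| = c\}$ splits $\C P^1$ into the two discs $\{|z| \le c\} \ni 0$ and $\{|z|\ge c\}\cup\{\infty\}\ni\infty$, and the standard computation (trivialising the canonical bundle of $\C^* = \C P^1\setminus\{0,\infty\}$ by $dz/z$, as in Cho's Maslov calculation for the Clifford torus) gives, for a holomorphic $f \colon (\Sigma,\partial\Sigma)\to(\C P^1, \bigsqcup_i\{|z|=c_i\})$, the formula $\mu(f) = 2\big(\#f^{-1}(0) + \#f^{-1}(\infty)\big) \ge 0$, with equality if and only if $f(\Sigma) \subset \C^*$. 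Applying this in both factors shows $\mu(\nu)\ge 0$, and $\mu(\nu)=0$ forces $\nu(\Sigma) \subset \C^*\times\C^*$. This reduces the lemma to proving that there is no non-constant holomorphic $\nu \colon (\Sigma,\partial\Sigma) \to (\C^*\times\C^*, \bigsqcup_i K_i)$; the same reduction applies to the restriction $\nu|_{\Sigma_0}$ to a connected component $\Sigma_0$ with at least two boundary components, which in the situation of Lemma \ref{lma: connected taut} is the annulus, with its two boundary circles on two distinct tori $K_i, K_j$ (a component whose entire boundary lay on a single torus would be forced to be constant by the maximum principle applied to $\log|z\circ\nu|$ and $\log|w\circ\nu|$).

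Second I would extract the forbidden relation. On $\Sigma$ the functions $u = \log|z\circ\nu|$ and $v = \log|w\circ\nu|$ are harmonic, and on each boundary circle $C$, which maps into some $K_{i(C)}$, they are the constants $\log r_{i(C)}$ and $\log\rho_{i(C)}$. Let $p_C, q_C \in \Z$ be the winding numbers of $z\circ\nu|_C$ and $w\circ\nu|_C$ about the respective circles; since $\nu(\Sigma) \subset \C^*\times\C^*$, Stokes applied to the holomorphic forms $\nu^*(dz/z)$, $\nu^*(dw/w)$ gives $\sum_C p_C = \sum_C q_C = 0$. Green's reciprocity for the harmonic functions $u,v$,
\[
\int_{\partial\Sigma} u\, {*dv} \;=\; \int_\Sigma du\wedge {*dv} \;=\; \int_{\partial\Sigma} v\, {*du},
\]
combined with $\int_C {*dv} = \int_C d\arg(w\circ\nu) = 2\pi q_C$ (and the analogous identity for $u$) and the constancy of $u,v$ on each $C$, yields $\sum_C q_C \log r_{i(C)} = \sum_C p_C \log\rho_{i(C)}$. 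Grouping boundary components by the torus they land on produces integer vectors $\alpha,\beta\in\Z^k$ with $\sum_i\alpha_i\log r_i = \sum_i\beta_i\log\rho_i$ and $\sum_i\alpha_i = \sum_i\beta_i = 0$. It remains to see that $(\alpha,\beta)\ne 0$: since $\nu$ is non-constant, at least one of $u,v$ is non-constant, hence by the maximum principle takes distinct constant values on two different boundary circles, which forces a non-trivial net winding and keeps $(\alpha,\beta)$ from vanishing; if necessary this is reinforced by noting that the symplectic area of $\nu$ is positive and is itself a $\Z$-combination of the $\log r_i$, $\log\rho_i$ through the same winding numbers. A non-trivial relation of this shape contradicts the non-resonance condition imposed on the radii (it is precisely the conformal-modulus obstruction of \cite{Ahlfors} for maps of doubly connected regions). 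Hence $\mu(\nu)\ne 0$ (and, being even in our setting, $\mu(\nu)\ge 2$), and the same argument applies to $\nu|_{\Sigma_0}$.

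I expect the genuine obstacle to be this last step — verifying that the integer relation produced is really non-trivial, i.e.\ that the winding numbers do not all cancel after grouping, and making precise how the non-resonance condition enters, especially in the annulus case where one must definitively rule out a Maslov-$0$ holomorphic annulus spanning two tori at different radii. This is where the careful bookkeeping of areas against periods, or equivalently the Ahlfors modulus estimate for doubly connected regions, has to be carried out; everything preceding it is standard input.
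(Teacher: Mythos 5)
Your proposal follows essentially the same route as the paper: project to the two $\mathbb{C}P^1$ factors, use positivity of intersections to see that Maslov index $0$ forces the curve into $\mathbb{C}^\ast \times \mathbb{C}^\ast$, then convert the existence of such a curve into an integer linear relation between $\log r_i$ and $\log \rho_j$ via harmonic function theory on the bordered surface, and contradict the non-resonance condition. The difference is one of packaging. You run Green's reciprocity $\int_{\partial\Sigma} u\,{*dv} = \int_{\partial\Sigma} v\,{*du}$ directly on $u = \log|z\circ\nu|$, $v = \log|w\circ\nu|$ and read off the relation from the winding numbers; the paper expresses $u$ and $v$ as $\sum_j (\text{boundary value})_j\, w_j$ in terms of the harmonic measures $w_j$ of the boundary circles, so that the conjugate periods are $P_l\,\overrightarrow{r}$, $P_l\,\overrightarrow{\rho}$ with $P_l$ the period matrix, and then uses the symmetry of $P_l$ to get $\langle \overrightarrow{r},\alpha\rangle = \langle \beta, \overrightarrow{\rho}\rangle$. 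These are the same identity.

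The point you flag as a possible gap — showing that the integer pair $(\alpha,\beta)$ produced is genuinely non-zero — is exactly where the paper's formulation pays off. The paper observes that $P_l$ has one-dimensional kernel spanned by $(1,\ldots,1)$: hence if, say, $\beta = 0$, then $\overrightarrow{r}$ lies in that kernel and all the $\log r_{i_j}$ coincide, which is false since the perturbed radii are pairwise distinct. Your maximum-principle argument is the same fact in disguise (if $u$ is non-constant and constant on each boundary component, then $\int_\Sigma |du|^2 = \sum_j u_j \int_{C_j}{*du} > 0$, so some period of $*du$ is non-zero), and because in the tautological correspondence each of the $\ell$ boundary circles of a connected component lands on a distinct torus $K_i$, no cancellation occurs when you assemble the winding numbers into $\alpha,\beta \in \Z^k$. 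So your argument closes the gap once this last observation is made explicit — it is not a genuine obstacle, just the step the paper states via the kernel of $P_l$. One small caution: the clean identity $\mu(f) = 2(\#f^{-1}(0)+\#f^{-1}(\infty))$ is Cho's disk formula; for bordered surfaces of higher topology one should phrase this as ``positivity of intersections with the divisors $\{0\},\{\infty\}$ in each factor forces $\mu(\nu) \geq 0$ with equality only if the image avoids the poles,'' which is all the argument needs and is precisely what the paper invokes.
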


Indeed, by positivity of intersections and the contrapositive Maslov $0$ assumption, the analysis reduces to there being no simultaneous pair of holomorphic maps $(\nu_1, \nu_2)$ from a compact Riemann surface $C$ with $2\leq l \leq k$ boundary components to $\C$ sending the boundary components to the circles of radii whose logarithms are $\arr{r} = (\log(r_{i_1}),\ldots, \log(r_{i_l}))$ and $\arr{\rho} = (\log(\rho_{i_1}),\ldots, \log(\rho_{i_l})).$ For such a pair of maps, the period matrix $P_l$ of periods of harmonic conjugates of the harmonic measures $w_j$ corresponding to the boundary components of $C$ satisfies: \[P_l \arr{r} = 2\pi \eta \in 2\pi \Z^l,\;\;P_l \arr{\rho} = 2\pi \xi \in 2\pi \Z^l.\] Let us show this for the first projection $\nu_1:C \to \C,$ as the case of the second one is similar. For such a pair of maps, the pull-back $f:C \to \R$ of the harmonic function $\log(|z|)$ to $C$ by $\nu_1$ is a harmonic function on $C$ that is constant on the boundary components of $C.$ Therefore it decomposes as $f = \sum \log(r_j)\, w_j.$ Furthermore $e^f$ extends to the holomorphic function $\nu_1:C \to \C.$ Hence the harmonic conjugate of $f$ must have periods in $2\pi \Z,$ which is equivalent to $P_l \arr{r} \in 2\pi \Z^l.$

Now note that $P_l$ is symmetric and has a one-dimensional kernel corresponding to constants, which is spanned by $(1,\ldots, 1)$. Hence $\xi,\eta \in \Z^l \setminus \{0\},$ and yet \[\brat{\arr{r},\xi} = \frac{1}{2\pi} \brat{\arr{r},P_l \arr{\rho}} =  \frac{1}{2\pi} \brat{P_l \arr{r},\arr{\rho}} =\brat{\eta,\arr{\rho}},\] which cannot hold under the non-resonance condition.

As a consequence, the conditions of Lemma \ref{lma: proportional to 1} are satisfied. What remains is to show that for suitable choices of $\bf b,$ the superpotential $W^{\bf{b}}$ has critical points.



Recall that $I = \{i \in \Z\;|\; 0 \leq i <k\}.$ Consider the coordinates $\{p_i, q_i\}$ on $H^1(\cL_{k,B};\Lambda_0)/H^1(\cL_{k,B};2\pi \sqrt{-1} \Z) \cong (\Lambda_0\setminus \Lambda_{+})^I$ corresponding to the basis $\{e_i, f_i\}$ of $H^1(\cL_{k,B};\C)$ given by the circles $L_i, S_i$ oriented in the direction of the Hamiltonian flow of $z$ on $S^2.$ More precisely, $p_i = \exp(x_i),$ $q_i = \exp(y_i)$ where $x_i, y_i \in \Lambda_0$ are the coordinates on $H^1(\cL_{k,B};\Lambda_0) =  H^1(\cL_{k,B};\C) \otimes \Lambda_0$ corresponding to $\{e_i,f_i\}.$

For $i\in I$ set $n(i)$ to be the number of $j \in I$ such that $j > i$ and $s(i)$ to be the number of $j \in I$ such that $j < i.$ Of course $n(i)+s(i) = k-1$ for all $i \in I.$ 

The part of the superpotential corresponding to smooth disks in $X$ with boundary on $\cl L$ is given by

\[W_{smooth} = T^a \sum_{i \in I} (q_i + q_i^{-1}) + \sum_{i\in I} (T^{n(i)C + B} p_i + T^{s(i)C + B} p_i^{-1}).\]

We let $D_1$ denote the divisor in $X$ corresponding to $D_{(1)} \times M^{k-1}$ for the toric divisor  $D_{(1)} = z^{-1}(\{\pm 1/2\}) \times S^2(2a) \subset M.$

Consider the bulk deformation $\bf b = \beta_{orb}[X_{\gamma}] + \beta D_1$ for $\beta_{orb}, \beta \in \Lambda_+ \cup \{0\}.$ Then the smooth part of the superpotential $W^{\bf b}$ is

\[W^{\bf b}_{smooth} = T^a \sum_{i \in I} (q_i + q_i^{-1}) + e^{\beta} \sum_{i\in I} (T^{n(i)C + B} p_i + T^{s(i)C + B} p_i^{-1}).\]

It remains to calculate the leading order term of the orbifold disk part $W^{\bf b}_{orb}$ of the superpotential. By analyzing the area of each Maslov index $2$ tautological curve ${\nu: (\Sigma,\del \Sigma) \to (M,\mathbb{K}),}$ considered as a curve with boundary on $\bb L',$ with $\Sigma$ having $\leq k-1$ connected components, we see that the area of its homology class $\nu_*[\Sigma,\del \Sigma] \in H_2(M,\mathbb{L}')$ is 

(1) at least $C+a$ if its projection  to the first factor in $M = S^2 \times S^2(2a)$ does not pass through either pole, and \\
(2) at least $B+C$ if its projection to the second factor does not pass through either pole. 

Furthermore, in the first case the minimal area $C+a$ is achieved when $\Sigma$ has $k-1$ connected components, $k-2$ disks and one curve with two boundary components, and $\nu$ is constant on each disk component. Indeed, if in the first case $\Sigma$ has at most $k-2$ connected components, then the area of $\nu$ will be at least $2C+a:$ its projection to the first factor covers at least two annuli contributing at least $2C$ to the area, and by Lemma \ref{lma: no Maslov 0} its projection to the second factor must pass through a pole, contributing at least $a$ to the area. Now on the curve with two boundary components, $\nu$ coincides with the curves analyzed in \cite[Section 3.3]{MakSmith-links}, whereby it contributes to the leading order term of $W^{\bf b}_{orb}$ only if the curve is an annulus.

By choosing translation-invariant trivializations of the tangent bundles of $K_i$ suitably\footnote{We trivialize the tangent bundle of $L_i$ where $i$ is even and of $S_i$ where $i$ is odd along the $S^1$-action on $S^2,$ and that of $L_{i}$ where $i$ is odd and of $S_i$ where $i$ is even along the inverse $S^1$-action. See \cite{MakSmith-links,Cho-holodiskor} for further details on trivializations and orientations.}, and selecting $\beta_{orb} \in \Lambda_{+}$ so that \begin{equation}\label{eq: beta orb} \frac{\beta^2_{orb}}{2} T^{C+a} = T^B \end{equation} $W^{\bf b}_{orb}$ is now given by 


\[ W^{\bf b}_{orb} = T^B \sum_{i \in I\setminus\{k-1\}} \vareps{i}\cdot  p_{i+1}^{-1} p_i(q_{i+1} + q_i^{-1}) + o(T^B)\] for signs $\eps_i \in \{\pm 1 \},$ where $o(T^B)$ denotes higher order terms of valuations strictly greater than $B.$

The superpotential of interest is \[ W^{\bf b} = W^{\bf b}_{smooth} + W^{\bf b}_{orb}.\]

Note that \[W^{\bf b}_{smooth} = T^a \sum_{i \in I} (q_i + q_i^{-1}) + T^{B}(p_{k-1} + p_0^{-1}) + o(T^B)\] since $e^{\beta} = 1 + o(1) \in 1 + \Lambda_{+},$ and hence

\[ W^{\bf b} = T^a \sum_{i \in I} (q_i + q_i^{-1}) + T^B \left(p_{k-1} + p_0^{-1} + \sum_{i \in I\setminus\{k-1\}} \vareps{i}\cdot  p_{i+1}^{-1} p_i(q_{i+1} + q_i^{-1}) \right) + o(T^B)\]

\begin{lma}\label{lma: crit pts}
	For each $\beta \in \Lambda_{+} \cup \{0\}$ the superpotential $W^{\bf b}$ has critical points.
\end{lma}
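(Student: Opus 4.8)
The plan is to locate the critical points of $W^{\bf b}=W^{\bf b}_{smooth}+W^{\bf b}_{orb}$ on the torus $(\Lambda_0\setminus\Lambda_{+})^{2k}$ with coordinates $(p_i,q_i)_{i\in I}$ by a valuation analysis of the equations $p_i\partial_{p_i}W^{\bf b}=0$, $q_i\partial_{q_i}W^{\bf b}=0$, and then to lift the resulting leading-order solution by Newton's method over the complete valued field $\Lambda$. Concretely, by the formulas above together with the area estimates preceding the statement,
\[ W^{\bf b}=T^{a}\sum_{i\in I}(q_i+q_i^{-1})+e^{\beta}\sum_{i\in I}\bigl(T^{(k-1-i)C+B}p_i+T^{iC+B}p_i^{-1}\bigr)+T^{B}\sum_{i<k-1}\eps_i\,p_{i+1}^{-1}p_i(q_{i+1}+q_i^{-1})+W_{hot}, \]
where, once $\beta_{orb}$ is fixed so that $\tfrac12\beta_{orb}^{2}T^{C+a}=T^{B}$, every term of $W_{hot}$ has valuation $\ge B+C$ and every displayed monomial has nonzero leading coefficient in $\C$. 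Since $e^{\beta}\in\Lambda_0^{\times}$ has leading coefficient $1$, the parameter $\beta$ only affects unit prefactors and plays no essential role.

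First I would extract the leading-order equations. Because $0<a<B-C<B$, the minimal valuation $a$ occurs only among the $q$-derivatives, forcing $q_i-q_i^{-1}\equiv0\pmod{\Lambda_{+}}$, so the leading coefficient $\bar q_i$ of each $q_i$ lies in $\{\pm1\}$. At the next level the derivatives $p_j\partial_{p_j}W^{\bf b}$ have leading part of valuation $B$: for $0<j<k-1$ it equals $\eps_j p_{j+1}^{-1}p_j(q_{j+1}+q_j^{-1})-\eps_{j-1}p_j^{-1}p_{j-1}(q_j+q_{j-1}^{-1})$, while for $j=0$ (resp.\ $j=k-1$) it carries the extra term $-e^{\beta}p_0^{-1}$ (resp.\ $+e^{\beta}p_{k-1}$) coming from the unique smooth monomial of valuation $B$. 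Equating these to zero and using $\bar q_i=\pm1$, a short propagation argument starting from the two ends (where the smooth term forces $\bar q_1+\bar q_0^{-1}\ne0$, hence $\bar q_1=\bar q_0$, and symmetrically at the other end) shows that all the $\bar q_i$ must be equal, say to $\eta\in\{\pm1\}$. With this the valuation-$B$ equations become $\bar p_0^{2}=\tfrac{e^{\beta}}{2\eta\eps_0}\bar p_1$, $\bar p_j^{2}=\tfrac{\eps_{j-1}}{\eps_j}\bar p_{j-1}\bar p_{j+1}$ for $0<j<k-1$, and $\bar p_{k-1}^{2}=\tfrac{2\eta\eps_{k-2}}{e^{\beta}}\bar p_{k-2}$; taking logarithms turns this into a linear system $\mathbf{A}_k\,(\log\bar p_0,\dots,\log\bar p_{k-1})=\mathbf c$ with $\mathbf{A}_k$ the $A_k$ Cartan matrix and $\mathbf c$ explicit. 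Since $\det\mathbf{A}_k=k+1\ne0$, this has a unique solution, which exponentiates to $\bar p_j\in\C^{*}$, producing a leading-order critical point $z_0=((\bar p_j)_j,(\eta)_j)$.

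To upgrade $z_0$ to a genuine critical point I would run Newton's method in two stages, using the standard fact that over $\Lambda$ the iteration for $F(z)=0$ converges once the residual $\nu(F(z_0))$ exceeds the order of non-invertibility of $DF$. Stage one: fix the sign branch $\eta$ and, with $p$ as a parameter, solve $q_j\partial_{q_j}W^{\bf b}(p,q)=0$ for $q=q(p)$; here the $q$-Hessian is $2T^{a}\diag(\eta)+O(T^{B})$, invertible with inverse of valuation $-a$, the residual at $q=\eta$ has valuation $\ge B>a$, and the relevant third derivatives have valuation $\ge B$ along the iterates, so the iteration converges to $q(p)=\eta+O(T^{B-a})$, depending analytically on $p$. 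Substituting gives a reduced potential $\overline W(p)=W^{\bf b}(p,q(p))$ whose critical points coincide (since $\partial_q W^{\bf b}$ vanishes on the solution locus) with those of $W^{\bf b}$ on this branch. Stage two: solve $\partial_p\overline W(p)=0$ near $\bar p$. The valuation-$B$ part of $\partial_p\overline W$ is exactly the Cartan system above — the $O(T^{B-a})$ correction from $q(p)$ re-enters only at valuation $2B-a>B+C$, using $a<B-C$ — so $D^2\overline W(\bar p)$ is $T^{B}$ times the (diagonally rescaled) matrix $\mathbf{A}_k$, invertible with inverse of valuation $-B$, while $\partial_p\overline W(\bar p)$ has valuation $\ge\min(B+C,2B-a)=B+C>B$, the bound $B+C$ coming from the smallest surviving smooth monomials $T^{C+B}p_j^{\pm1}$ and from $W_{hot}$. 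Hence the iteration converges to $p^{*}=\bar p+O(T^{C})$, and $(p^{*},q(p^{*}))$ is the sought critical point of $W^{\bf b}$.

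The main obstacle is the multi-scale structure: the three bands of monomials sit at valuations $a$, $B$ and $\ge B+C$, which are too close together for a single-pass Hensel/implicit-function argument (one need not have $B>2a$ nor $B+C>2B$), so one is forced into the split $q$-stage/$p$-stage iteration above, and it is precisely the hypothesis $0<a<B-C$ that makes the valuation inequalities ($2B-a>B+C$ and $2B-a>B$) go through so that the $q$-corrections stay subleading at every step. A second point requiring care is that one must select the sign branch $\eta$ to be \emph{constant}: on the other $2^{k}-1$ branches the leading orbifold terms partially cancel, the clean valuation-$B$ Cartan structure is destroyed, and there is no critical point. The only geometric input borrowed here is the area estimate placing $W_{hot}$ at valuation $\ge B+C$; the rest is the linear algebra of $\mathbf{A}_k$ together with the completeness of $\Lambda$.
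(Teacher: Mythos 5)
Your proof is correct and follows the same overall strategy as the paper's: read off the leading-order critical point from the two bands of valuations $a$ and $B$ (yielding $\bar q_i\in\{\pm1\}$ and the $A_k$ Cartan system for the $\bar p_i$), then lift it using the completeness of the Novikov field. The difference is in how the lift is organized. The paper normalizes the two families of critical-point equations by $T^{a}$ and $T^{B}$ respectively, rewriting them as $q_i^{2}=1+F_{q,i}$, $p_{i-1}^{-1}p_i^{2}p_{i+1}^{-1}=\sigma_i+F_{p,i}$ with $F_{q,i},F_{p,i}$ of positive valuation, and then iterates order by order over a gapped submonoid of $\R_{\geq0}$; after this rescaling the linearization is effectively block lower-triangular with diagonal $2\id\oplus A_k$, hence invertible with valuation-zero inverse, and this is what makes the single-pass order-by-order argument go through despite the $T^{a}$/$T^{B}$ scale gap you are worried about. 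Your two-stage Newton/implicit-function scheme is a different packaging of the same triangular structure: eliminating $q=q(p)$ first via the $T^{a}$-nondegenerate $q$-Hessian makes the triangularity explicit, and your estimate $2B-a>B+C$ (equivalent to $a<B-C$) is exactly what controls the Schur-complement feedback of the eliminated $q$-variables into the reduced $p$-Hessian. Both arguments land in the same place; yours spells out more explicitly why the multi-scale structure is harmless, while the paper absorbs it into the normalization before iterating. Two small remarks on your side observations: the propagation argument forcing all the $\bar q_i$ to share a common sign $\eta$ is correct and a nice additional constraint, but it is not logically needed for existence (the paper simply picks $q_i=1$); and since the all-$(-1)$ branch works equally well (there $q_{j+1}+q_j^{-1}=-2\neq0$, so the Cartan structure survives with a sign flip), the number of failing branches is $2^{k}-2$ rather than $2^{k}-1$.
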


%
%

\begin{proof}[Proof of Lemma \ref{lma: crit pts}]

We start by finding solutions to the leading order term of the equation $dW^{\bf b} = 0.$

We first consider derivatives in $q_i:$ \begin{equation}\label{eq: q_i derivatives} \del_{q_i} W^{\bf b} = T^a(1 - q_i^{-2}) + o(T^a)\end{equation} where the higher order terms start with valuation at least $B.$ Hence the solutions to the leading order of this equation are $q_i \in \{\pm 1\}.$ Let us choose $q_i = 1$ for all $0 \leq i < k.$

Now we proceed to consider the derivatives in the $p_i,$ for $0< i < k-1:$ \begin{equation}\label{eq: p_i derivatives} \del_{p_i} W^{\bf b} = 2T^B (- \vareps{i-1} p_i^{-2} p_{i-1} + \vareps{i} p_{i+1}^{-1}) +  o(T^B),\end{equation} for $i = 0$ 
\begin{equation}\label{eq: p_0 derivative} \del_{p_0} W^{\bf b} = T^B  (2\vareps{0} p_1^{-1} - p_0^{-2}) + o(T^B).\end{equation} and for $i = k-1$
\begin{equation}\label{eq: p k-1  derivative} \del_{p_{k-1}} W^{\bf b} = T^B  (-  2\vareps{k-2} p_{k-1}^{-2} p_{k-2} + 1) + o(T^B).\end{equation}

The solutions to the leading order equations are hence $p_i \in \C$ satisfying: \begin{align}\label{eq: recursion} p_{i-1}^{-1} p_i^{2} p_{i+1}^{-1} = \sigma_i,\;\; 0<i<k-1,\\ \nonumber p_0^2 p_1^{-1}  =  \sigma_0,\;\; p_{k-2}^{-1} p_{k-1}^{2} =  \sigma_{k-1} \end{align} for certain numbers $\sigma_i \in \R\setminus \{0\}.$ Let us search for solutions $p_i \in \C \setminus \{0\}$ of the form $p_i = \exp(P_i),$ for $P_i \in \C.$ Choosing $C_i$ such that $\sigma_i = \exp(C_i),$ we obtain that it is enough to find $P_i \in \C$ such that for $P = (P_0, \ldots, P_{k-1}),$ $C = (C_0, \ldots, C_{k-1})$ \[ A P = C \] where \[A = \begin{pmatrix}
2 & -1 & 0 & 0 &\dots & 0\\
-1 & 2 & -1 & 0 &\dots & 0\\
0 & & & & & \\
\vdots & & & & &\vdots \\
 & & & & &0 \\
0 & \dots & 0 & -1 & 2 & -1 \\
0 & \dots & 0 & 0 & -1 & 2  \\
\end{pmatrix}\]

is the standard $A_{k}$ Cartan matrix. Since this is a Toeplitz tridiagonal matrix, its eigenvalues are known to be \[ 2\left(1 + \cos\left(\frac{j \pi}{k+1}\right) \right),\; 1\leq j \leq k.\] In particular it is invertible. Hence $p_i = \xi_{i} := \exp(P_i)$ for $P = A^{-1} C$ is a solution to the lowest order term of the critical point equation. 


In summary, $q_i = 1,$ $p_i = \xi_i$ are solutions to the leading order term of $dW^{\bf b} = 0.$ As in \cite{MakSmith-links}, following \cite{FO3-toric-bulk}, we proceed to a solution to the full equation $dW^{\bf b} = 0,$ for $\bf b$ corresponding to $\beta,$ inductively in the $T$-adic valuation. Rewrite the equation $dW^{\bf b} = 0$ in the form \begin{align*}q_i^2 & = 1 + F_{q,i} \\ p_{i-1}^{-1} p_i^{2} p_{i+1}^{-1} &= \sigma_i + F_{p,i}, 0<i<k-1,\\  p_0^2 p_1^{-1} & = \sigma_0 + F_{p,0},\\ p_{k-2}^{-1} p_{k-1}^{2} & = \sigma_{k-1} + F_{p,k-1} \end{align*} where $F_{p,i}, F_{q,i} = o(1)$ are the higher order terms. Since $q_i = 1,$ $p_i = \xi_i$ are solutions to the zero order term of this equation, we search for solutions to the full equation in the form $q_i = \exp(Q_i), p_i =\xi_i \exp(P_i)$ for $Q_i, P_i$ of positive valuation\footnote{Note that for $R = R_0 + R_+ \in \Lambda_0$ of valuation $0,$ where $R_0 \in \C, R_+ \in \Lambda_{+},$ $\exp(R) = \exp(R_0) \cdot \exp(R_+)$ by definition.}. Observe that the matrix $2\id \oplus A,$ where $2\id$ corresponds to the derivative of the left hand side of the equation in the $q_i$ variables and $A$ corresponds to that in the $p_i$ variables, is invertible.  Hence proceeding order by order in $Q_i, P_i$ we obtain the existence of genuine solutions.

Note that as $\beta$ is an element in $\Lambda_+ \cup \{0\},$ it is gapped with respect to a discrete submonoid $G_{\beta} \subset \R_{\geq 0}.$

To make the iterative method work, we look at gapped elements of $\Lambda_0$ with exponents in a suitable discrete submonoid $G \subset \R_{\geq 0}.$ Specifically, we start with the monoid $G_0 = G(\cl L_{k,B},\om,J)$ generated by areas of (orbifold) holomorphic disks with boundary on $\cl L_{k,B}.$ By means of the tautological correspondence it is contained in the submonoid $G(M,\mathbb{L}_{k,B})$ generated by $a, B, C.$ Now let $G_1$ be the submonoid generated by $G_0,$ $G_{\beta}$ and $(B-C-a)/2$ coming from \eqref{eq: beta orb}. Consider the subsets $G^a_1 = \{g \in G_1\;|\; g>a\},$ $G^B_1 = \{g \in G_1\;|\; g>B\}.$ We have $G^a_1-a, G^B_1-B \subset \R_{> 0}.$ The gapped submonoid $G$ that we work with is the one generated by $G^a_1-a$ and $G^B_1 - B.$ Indeed the exponents of the coefficients of all $F_{q,i}, F_{p,i}$ will be contained in it: the coefficients of $F_{q,i}$ are contained in $G^a_1-a$ by Equation \eqref{eq: q_i derivatives}, and the coefficients of $F_{p,i}$ are contained in $G^B_1-B$ by Equations \eqref{eq: p_i derivatives}, \eqref{eq: p_0 derivative}, \eqref{eq: p k-1 derivative}. Since it is gapped we may enumerate its elements as $g_0 = 0 < g_1 < g_2 < \ldots,$ and work inductively assuming a solution modulo $T^{g_j}$ and seeking a solution modulo $T^{g_{j+1}}.$

\end{proof}

\begin{rmk}
In principle the choice $\beta = 0$ is sufficient for our purposes. However, different choices of $\beta$ might lead to different spectral invariants. It would be interesting to explore this dependence further.
\end{rmk}

\begin{proof}[Proof of Theorem \ref{thm: orbifold homology}]
It is a direct consequence of Theorem \ref{thm: crit W} and Lemma \ref{lma: crit pts}.
\end{proof}




\section{Further directions} \label{sec-discussion}

\subsection{Other configurations}\label{subsec-other}  Consider a finite collection $\cC:= \{L_i\}$, $i=1,\dots, k$
of pairwise disjoint embedded circles in $S^2$. Each such collection defines a
graph $\Gamma_\cC$ whose vertices are the connected components of $S^2 \setminus \mathbb{L}$ with $\mathbb{L} = \sqcup L_j$, and a pair of components are joined by an edge if they have a common boundary circle.
Note that $\Gamma_\cC$ is a tree. Additionally, the tree is vertex-weighted: the weight of a vertex
is the area of the corresponding component. We denote the weighting by $w_\cC$. An elementary inductive
argument shows that the pair $(\Gamma_{\cC}, w_\cC)$ determines $\cC$ up to a Hamiltonian isotopy.

For given collection $\cC$, take $a>0$, and denote by  $\cL$ the image of
$\prod (L_i \times S^1_{eq})$ in the $k$-th symmetric product of $S^2 \times S^2(2a)$.

\begin{qtn} \label{qtn-conf} {\rm For which vertex-weighted tree $(\Gamma_{\cC}, w_\cC)$ can one define
		non-trivial Lagrangian estimators coming from the Lagrangian orbifold Floer homology of
		$\cL$, with an appropriate choice of $a>0$?}
\end{qtn}

An obvious necessary condition for non-vanishing of the Floer homology is that for every Hamiltonian diffeomorphism $\phi \in \Ham(S^2)$, the symmetric product of $\phi \times \id \in \Ham(S^2 \times S^2(2a))$  does not displace $\cL$. This is equivalent to the following {\it matching property}: for every $\phi \in \Ham(S^2)$ there exists a permutation $\sigma$ such that
$$\phi(L_i) \cap L_{\sigma(i)} \neq \emptyset \;\; \forall i \in \{1,\dots, d\}\;.$$
It would be interesting (and seems to be not totally trivial) to describe this matching
property in terms of the vertex-weighted tree $(\Gamma_{\cC}, w_\cC)$.

Note that in the present paper we dealt with the case of a linear graph
with the weight $w(v) = B$ if the vertex $v$ has degree $1$, and $w(v) = C$ if the degree of
$v$ is $2$. One readily checks that the inequality $B > C$ is equivalent to the matching property.

Additionally, it would be interesting to explore the analogue of Question \ref{qtn-conf}
on higher genus surfaces.

It is an intriguing and completely open question whether the methods of the present paper
are applicable to Lagrangian configurations on more general symplectic manifolds.
Symplectic toric manifolds provide a promising playground, in which case the simplest Lagrangian configuration is provided by (a suitable modification of) the collection of Bohr-Sommerfeld toric fibers.

Finally, we expect that generalizations of the quantitative methods used in this paper to more complex configurations of Lagrangians would yield further results on quantitative symplectic topology, including the Hofer metric and questions of $C^0$ symplectic topology. We hope to investigate them in a sequel.

\subsection{Next destination: asymptotic cone of $\Ham(S^2)$}
Flats in $\Ham(S^2)$ described in Theorem \ref{thm: main-L} give rise to
infinite-dimensional abelian subgroups in the asymptotic cone (in the sense of Gromov \cite[3.29]{gromov2007metric}) of $\Ham(S^2)$. Recall \cite{calegari2011stable} that the latter is a group equipped with a bi-invariant metric which,
roughly speaking, reflects the large-scale geometry of $(\Ham(S^2), d_{\rm Hofer})$.
For closed surfaces of genus $\geq 2$, the asymptotic cone of the group of Hamiltonian
diffeomorphisms contains a free group with two generators \cite{alvarez2019embeddings,chor2020eggbeater}.
The construction is based on a chaotic dynamical system called {\it the eggbeater map}
(see \cite{PolShe} for symplectic aspects of this map). For the torus and the sphere, existence of a free non-abelian subgroup in the asymptotic cone  is still unknown.
Furthermore, while there is a hope that a suitable modification of eggbeaters
could work in the case of torus, in the case of the sphere  egg-beaters fail to induce a non-abelian subgroup in the asymptotic cone (an observation of Michael Khanevsky).
It would be interesting to explore the algebraic and geometric structure of the asymptotic cone of $\Ham(S^2)$. As a first step, it would be natural to explore asymptotic growth (in the sense of Hofer's metric) of subgroups generated by a finite number of pair-wise ``highly non-commuting" flats.

\subsection{Comparison with periodic Floer Homology?}
Instead of pulling back spectral estimators $c^0_{k,B}$ on $S^2$ from
$S^2 \times S^2(2a)$ (see Theorem \ref{thm: invariants sphere} above), we could have run our construction omitting the factor $S^2(2a)$.
In other words, we could have worked directly with orbifold Lagrangian spectral invariants
on the symmetric products on $S^2$. In this way the spectral estimators become
well-defined when $B=C = 1/(k+1)$, which is the limiting case for the assumptions
of Theorem \ref{thm: invariants sphere}.  Interestingly enough, in this limiting case
for certain radially symmetric Hamiltonians, our invariants agree with the ones constructed in
\cite{CGHS2} by means of periodic Floer homology. It would be interesting to find a conceptual
explanation of this coincidence.

\color{black}

\section*{Acknowledgements}
We thank Dan Cristofaro-Gardiner, Vincent Humili\`{e}re, and Sobhan Seyfaddini for very useful communications regarding their work \cite{CGHS2}. While our original approach was based
on the study of Lagrangian configurations of 2 circles (which suffices for proving the results
on flats in Hofer's geometry), our interest in configurations of $k \geq 3$ circles was in part triggered by their work.

We thank Mohammed Abouzaid, Michael Brandenbursky, Cheuk Yu Mak, and Ivan Smith for useful discussions, Vitaly Bergelson for bringing \cite[Theorem 1.2]{Bergelson} to our attention, and Gerhard Knieper for a stimulating question leading to Corollary \ref{cor: Banach}.

LP was partially supported by the Israel Science Foundation grant 1102/20.

ES was partially supported by an NSERC Discovery Grant, by the Fonds de recherche du Qu\'{e}bec - Nature et technologies, and by the Fondation Courtois.

\bibliographystyle{abbrv}
\bibliography{bibliographyS2-arXiv6-final}

\begin{thebibliography}{10}

\bibitem{AbreuMcDuff}
M.~Abreu and D.~McDuff.
\newblock Topology of symplectomorphism groups of rational ruled surfaces.
\newblock {\em J. Amer. Math. Soc.}, 13(4):971--1009, 2000.

\bibitem{Adem-book}
A.~Adem, J.~Leida, and Y.~Ruan.
\newblock {\em Orbifolds and stringy topology}, volume 171 of {\em Cambridge
  Tracts in Mathematics}.
\newblock Cambridge University Press, Cambridge, 2007.

\bibitem{Ahlfors}
L.~V. Ahlfors.
\newblock {\em Complex analysis}.
\newblock McGraw-Hill Book Co., New York, third edition, 1978.
\newblock An introduction to the theory of analytic functions of one complex
  variable, International Series in Pure and Applied Mathematics.

\bibitem{alvarez2019embeddings}
D.~Alvarez-Gavela, V.~Kaminker, A.~Kislev, K.~Kliakhandler, A.~Pavlichenko,
  L.~Rigolli, D.~Rosen, O.~Shabtai, B.~Stevenson, and J.~Zhang.
\newblock Embeddings of free groups into asymptotic cones of {H}amiltonian
  diffeomorphisms.
\newblock {\em Journal of Topology and Analysis}, 11(02):467--498, 2019.

\bibitem{Banach}
S.~Banach.
\newblock {\em Th\'{e}orie des op\'{e}rations lin\'{e}aires}.
\newblock \'{E}ditions Jacques Gabay, Sceaux, 1993.
\newblock Reprint of the 1932 original.

\bibitem{Bergelson}
V.~Bergelson.
\newblock Sets of recurrence of {${\bf Z}^m$}-actions and properties of sets of
  differences in {${\bf Z}^m$}.
\newblock {\em J. London Math. Soc. (2)}, 31(2):295--304, 1985.

\bibitem{Biran-ECM}
P.~Biran.
\newblock From symplectic packing to algebraic geometry and back.
\newblock In {\em European {C}ongress of {M}athematics, {V}ol. {II}
  ({B}arcelona, 2000)}, volume 202 of {\em Progr. Math.}, pages 507--524.
  Birkh\"{a}user, Basel, 2001.

\bibitem{calegari2011stable}
D.~Calegari and D.~Zhuang.
\newblock Stable {W}-length.
\newblock {\em Topology and geometry in dimension three}, 560:145--169, 2011.

\bibitem{CharestWoodward-Fukaya}
F.~Charest and C.~Woodward.
\newblock Floer cohomology and flips.
\newblock {\em Mem. Amer. Math. Soc.}, 279(1372), 2022.

\bibitem{Cho-holodiskor}
C.-H. Cho.
\newblock Holomorphic discs, spin structures, and {F}loer cohomology of the
  {C}lifford torus.
\newblock {\em Int. Math. Res. Not.}, (35):1803--1843, 2004.

\bibitem{ChoPoddar}
C.-H. Cho and M.~Poddar.
\newblock Holomorphic orbi-discs and {L}agrangian {F}loer cohomology of
  symplectic toric orbifolds.
\newblock {\em J. Differential Geom.}, 98(1):21--116, 2014.

\bibitem{chor2020eggbeater}
A.~Chor.
\newblock Eggbeater dynamics on symplectic surfaces of genus 2 and 3.
\newblock {\em Ann. Math. Québec}, 2022.
\newblock Available at https://doi.org/10.1007/s40316-022-00202-z.

\bibitem{CGHS2}
D.~Cristofaro-Gardiner, V.~Humili{\`e}re, and S.~Seyfaddini.
\newblock {P}{F}{H} spectral invariants on the two-sphere and the large scale
  geometry of {H}ofer’s metric.
\newblock {\em J. Eur. Math. Soc. (JEMS), to appear}.
\newblock Available at arXiv:2102.04404.

\bibitem{CGHS1}
D.~{Cristofaro-Gardiner}, V.~{Humili{\`e}re}, and S.~{Seyfaddini}.
\newblock {Proof of the simplicity conjecture}.
\newblock Preprint arXiv:2001.01792, 2020.

\bibitem{EntovPolterovichCalabiQM}
M.~Entov and L.~Polterovich.
\newblock Calabi quasimorphism and quantum homology.
\newblock {\em Int. Math. Res. Not.}, (30):1635--1676, 2003.

\bibitem{EntovPolterovich-intersections}
M.~Entov and L.~Polterovich.
\newblock Quasi-states and symplectic intersections.
\newblock {\em Comment. Math. Helv.}, 81(1):75--99, 2006.

\bibitem{EntovPolterovich-rigid}
M.~Entov and L.~Polterovich.
\newblock Rigid subsets of symplectic manifolds.
\newblock {\em Compos. Math.}, 145(3):773--826, 2009.

\bibitem{Epstein}
D.~B.~A. Epstein.
\newblock The simplicity of certain groups of homeomorphisms.
\newblock {\em Compositio Math.}, 22:165--173, 1970.

\bibitem{FO3:book-vol12}
K.~Fukaya, Y.-G. Oh, H.~Ohta, and K.~Ono.
\newblock {\em Lagrangian intersection {F}loer theory: anomaly and obstruction.
  {P}arts {I} and {II}}, volume 46.1 and 46.2 of {\em AMS/IP Studies in
  Advanced Mathematics}.
\newblock American Mathematical Society, Providence, RI, 2009.

\bibitem{FO3-toric-I}
K.~Fukaya, Y.-G. Oh, H.~Ohta, and K.~Ono.
\newblock Lagrangian {F}loer theory on compact toric manifolds. {I}.
\newblock {\em Duke Math. J.}, 151(1):23--174, 2010.

\bibitem{FO3-toric-bulk}
K.~Fukaya, Y.-G. Oh, H.~Ohta, and K.~Ono.
\newblock Lagrangian {F}loer theory on compact toric manifolds {II}: bulk
  deformations.
\newblock {\em Selecta Math. (N.S.)}, 17(3):609--711, 2011.

\bibitem{FO3-degeneration}
K.~Fukaya, Y.-G. Oh, H.~Ohta, and K.~Ono.
\newblock Toric degeneration and nondisplaceable {L}agrangian tori in
  {$S^2\times S^2$}.
\newblock {\em Int. Math. Res. Not. IMRN}, (13):2942--2993, 2012.

\bibitem{FO3-qm}
K.~Fukaya, Y.-G. Oh, H.~Ohta, and K.~Ono.
\newblock Spectral invariants with bulk, quasi-morphisms and {L}agrangian
  {F}loer theory.
\newblock {\em Mem. Amer. Math. Soc.}, 260(1254), 2019.

\bibitem{gambaudo2004commutators}
J.-M. Gambaudo and {\'E}.~Ghys.
\newblock Commutators and diffeomorphisms of surfaces.
\newblock {\em Ergodic Theory and Dynamical Systems}, 24(5):1591, 2004.

\bibitem{GG-pseudorotations}
V.~Ginzburg and B.~G\"{u}rel.
\newblock Hamiltonian pseudo-rotations of projective spaces.
\newblock {\em Invent. Math.}, 214(3):1081--1130, 2018.

\bibitem{GG-Arnold}
V.~L. Ginzburg and B.~Z. G{\"u}rel.
\newblock Approximate identities and {L}agrangian {P}oincar{\'e} recurrence.
\newblock {\em Arnold Mathematical Journal}, 5(1):5--14, 2019.

\bibitem{GromovPseudohol}
M.~Gromov.
\newblock {Pseudo holomorphic curves in symplectic manifolds.}
\newblock {\em Invent. Math.}, 82:307--347, 1985.

\bibitem{gromov2007metric}
M.~Gromov.
\newblock {\em Metric structures for {R}iemannian and non-{R}iemannian spaces}.
\newblock Springer Science \& Business Media, 2007.

\bibitem{Higman}
G.~Higman.
\newblock On infinite simple permutation groups.
\newblock {\em Publ. Math. Debrecen}, 3:221--226 (1955), 1954.

\bibitem{Hind-Kerman-wp}
R.~Hind and E.~Kerman.
\newblock Packing {L}agrangian tori.
\newblock {\em Geom. Topol., to appear}.
\newblock Available at arXiv:2109.01772, 2021.

\bibitem{HoferMetric}
H.~Hofer.
\newblock On the topological properties of symplectic maps.
\newblock {\em Proc. Roy. Soc. Edinburgh Sect. A}, 115(1-2):25--38, 1990.

\bibitem{HZ-book}
H.~Hofer and E.~Zehnder.
\newblock {\em Symplectic invariants and {H}amiltonian dynamics}.
\newblock Birkh\"auser Advanced Texts: Basler Lehrb\"ucher. [Birkh\"auser
  Advanced Texts: Basel Textbooks]. Birkh\"auser Verlag, Basel, 1994.

\bibitem{Lalonde-McDuff-Energy}
F.~Lalonde and D.~McDuff.
\newblock The geometry of symplectic energy.
\newblock {\em Ann. of Math. (2)}, 141(2):349--371, 1995.

\bibitem{LeRoux-simplicity}
F.~Le~Roux.
\newblock Simplicity of {${\rm Homeo}(\Bbb D^2,\partial\Bbb D^2,{\rm Area})$}
  and fragmentation of symplectic diffeomorphisms.
\newblock {\em J. Symplectic Geom.}, 8(1):73--93, 2010.

\bibitem{Leclercq-spectral}
R.~Leclercq.
\newblock Spectral invariants in {L}agrangian {F}loer theory.
\newblock {\em J. Mod. Dyn.}, 2(2):249--286, 2008.

\bibitem{LeclercqZapolsky}
R.~Leclercq and F.~Zapolsky.
\newblock Spectral invariants for monotone {L}agrangians.
\newblock {\em J. Topol. Anal.}, 10(3):627--700, 2018.

\bibitem{LefeuvreApprox}
T.~Lefeuvre.
\newblock Uniform approximation of volume-preserving homeomorphisms by
  volume-preserving diffeomorphisms.
\newblock Unpublished.

\bibitem{MakSmith-links}
C.~Y. {Mak} and I.~{Smith}.
\newblock Non-displaceable {L}agrangian links in four-manifolds.
\newblock {\em Geom. Funct. Anal.}, 31(2):438--481, 2021.

\bibitem{McDuff-examples}
D.~McDuff.
\newblock Examples of symplectic structures.
\newblock {\em Invent. Math.}, 89(1):13--36, 1987.

\bibitem{McDuffPolterovichPacking}
D.~McDuff and L.~Polterovich.
\newblock Symplectic packings and algebraic geometry.
\newblock {\em Invent. Math.}, 115(3):405--434, 1994.
\newblock With an appendix by Yael Karshon.

\bibitem{McDuffSalamonIntro3}
D.~McDuff and D.~Salamon.
\newblock {\em Introduction to symplectic topology}.
\newblock Oxford Graduate Texts in Mathematics. Oxford University Press,
  Oxford, third edition, 2017.

\bibitem{MonznerVicheryZapolsky}
A.~Monzner, N.~Vichery, and F.~Zapolsky.
\newblock Partial quasimorphisms and quasistates on cotangent bundles, and
  symplectic homogenization.
\newblock {\em J. Mod. Dyn.}, 6(2):205--249, 2012.

\bibitem{MullerApprox}
S.~M\"{u}ller.
\newblock Uniform approximation of homeomorphisms by diffeomorphisms.
\newblock {\em Topology Appl.}, 178:315--319, 2014.

\bibitem{OhApprox}
Y.-G. Oh.
\newblock {${C}^0$}-coerciveness of {M}oser's problem and smoothing area
  preserving homeomorphisms.
\newblock Preprint, arXiv:math/0601183, 2006.

\bibitem{Oh-relative-cot}
Y.-G. Oh.
\newblock Symplectic topology as the geometry of action functional. {I}.
  {R}elative {F}loer theory on the cotangent bundle.
\newblock {\em J. Differential Geom.}, 46(3):499--577, 1997.

\bibitem{OhMuller}
Y.-G. Oh and S.~Müller.
\newblock {The group of {H}amiltonian homeomorphisms and {$C^0$}-symplectic
  topology}.
\newblock {\em Journal of Symplectic Geometry}, 5(2):167 -- 219, 2007.

\bibitem{OstroverAGT}
Y.~Ostrover.
\newblock Calabi quasi-morphisms for some non-monotone symplectic manifolds.
\newblock {\em Algebr. Geom. Topol.}, 6:405--434, 2006.

\bibitem{OszvathSzabo-symm}
P.~Ozsv\'{a}th and Z.~Szab\'{o}.
\newblock Holomorphic disks and three-manifold invariants: properties and
  applications.
\newblock {\em Ann. of Math. (2)}, 159(3):1159--1245, 2004.

\bibitem{Polterovich-displ}
L.~Polterovich.
\newblock Symplectic displacement energy for {L}agrangian submanifolds.
\newblock {\em Ergodic Theory Dynam. Systems}, 13(2):357--367, 1993.

\bibitem{Polterovich-S2}
L.~Polterovich.
\newblock Hofer's diameter and {L}agrangian intersections.
\newblock {\em Internat. Math. Res. Notices}, (4):217--223, 1998.

\bibitem{P-book}
L.~Polterovich.
\newblock {\em The geometry of the group of symplectic diffeomorphisms}.
\newblock Lectures in Mathematics ETH Z\"urich. Birkh\"auser Verlag, Basel,
  2001.

\bibitem{PolterovichRosen}
L.~Polterovich and D.~Rosen.
\newblock {\em Function theory on symplectic manifolds}, volume~34 of {\em CRM
  Monograph Series}.
\newblock American Mathematical Society, Providence, RI, 2014.

\bibitem{PolShe}
L.~Polterovich and E.~Shelukhin.
\newblock Autonomous {H}amiltonian flows, {H}ofer's geometry and persistence
  modules.
\newblock {\em Selecta Math. (N.S.)}, 22(1):227--296, 2016.

\bibitem{Py-plats}
P.~Py.
\newblock Quelques plats pour la m\'{e}trique de {H}ofer.
\newblock {\em J. Reine Angew. Math.}, 620:185--193, 2008.

\bibitem{SikoravApprox}
J.-C. Sikorav.
\newblock Approximation of a volume-preserving homeomorphism by a
  volume-preserving diffeomorphism.
\newblock Preprint, 2007.

\bibitem{Sikorav}
J.-C. Sikorav.
\newblock Syst\`{e}mes hamiltoniens et topologie symplectique.
\newblock 1990.
\newblock Universit\`{a} di Pisa.

\bibitem{Viterbo-specGF}
C.~Viterbo.
\newblock Symplectic topology as the geometry of generating functions.
\newblock {\em Math. Ann.}, 292(4):685--710, 1992.

\end{thebibliography}
\end{document}